\newdimen\bibspace
\renewenvironment{thebibliography}[1]{%
 \section*{\refname %or \bibname if you use ``book'' as the documentclass
       \@mkboth{\MakeUppercase\refname}{\MakeUppercase\refname}}%
     \list{\@biblabel{\@arabic\c@enumiv}}%
          {\settowidth\labelwidth{\@biblabel{#1}}%
           \leftmargin\labelwidth
           \advance\leftmargin\labelsep
           \itemsep\bibspace
           \parsep\z@skip     %
           \@openbib@code
           \usecounter{enumiv}%
           \let\p@enumiv\@empty
           \renewcommand\theenumiv{\@arabic\c@enumiv}}%
     \sloppy\clubpenalty4000\widowpenalty4000%
     \sfcode`\.\@m}
    {\def\@noitemerr
      {\@latex@warning{Empty `thebibliography' environment}}%
     \endlist}
\newtheorem{thm}{Theorem}[section]
\newtheorem{lem}[thm]{Lemma}
\newtheorem{defn}[thm]{Definition}
\newtheorem{cor}[thm]{Corollary}
\newtheorem{rem}[thm]{Remark}
\def\XXint#1#2#3{{\setbox0=\hbox{$#1{#2#3}{\int}$}
  \vcenter{\hbox{$#2#3$}}\kern-.5\wd0}}
\newcommand{\om}{\Omega}                \newcommand{\pa}{\partial}
           \newcommand{\ud}{\mathrm{d}}
\newcommand{\be}{\begin{equation}}      \newcommand{\ee}{\end{equation}}
\newcommand{\R}{\mathbb{R}}
\begin{document}

\title{\textbf{Schauder estimates for the boundary diffusion equation}
\bigskip}

\author{\large  Xuzhou Yang }
%Department of Mathematics, Yunnan University,\\
%Kunming, Yunnan 650500, P. R. China\\
\date{}

\maketitle

\begin{abstract}
In this paper, a certain type of linear boundary diffusion equation is studied. Such equation is crucial in the research of a non-linear boundary diffusion problem which was originated from the boundary heat control problem and the boundary Yamabe flow. Such equation is also analogous to a fractional porous medium equation. For compatible-interior-energy solutions, with the help of two types of test functions, we find a way to localize the non-local boundary diffusion equation, and a theory of $C^{1+\alpha}$-type Schauder estimates of weak compatible-interior-energy solutions are established in the linear boundary diffusion equation, through iteration to integrals both on the interior and the boundary at the same time. As applications, Schauder estimates of this new $C^{1+\alpha}$-type solution to the related boundary diffusion equations are generated, which eventually provide a proof of short time existence of $C^{1+\alpha}$ compatible-interior-energy solution with smooth positive initial data and inhomogeneous boundary term.
\end{abstract}

{\small \textbf{Keywords:}Local Boundary diffusion, Schauder estimate, Boundary Yamabe flow, Boundary heat control problem, Local interpretation of Dirichlet-Neumann boundary operator}

\smallskip

{\small \textbf{MSC(2020):}  Primary 35K57; Secondary 35B65, 35R11, 35B09}

\bigskip

\tableofcontents

\section{Introduction}\label{sec:intro}

Let $ \Omega \subset \R^n$, $n \geq 2 $,  be a bounded domain with $C^{1+\alpha}$ boundary. We are concerned with Schauder estimates for the positive classical solution of a boundary diffusion equation
\begin{equation}\label{eq:boundary diffusion}
 \left\{
\begin{aligned}
  \mbox{div} (a \nabla v)  &=   0  \quad  \mbox{in }     \Omega\times (0,T),     \\
   \partial_{t}v+a \partial_{\nu}v + bv &=  f    \quad  \mbox{on }     \partial \Omega\times (0,T),
\end{aligned}
\right.
\end{equation}
where $\Delta $ is the Laplace operator with respect to the spatial variable $x\in \om$, $\nu$ is the unit outer normal to $\pa \om$, $\pa_{\nu}$ is the outer normal derivative. $a(x,t)\in C^{\alpha}(\bar{\Omega}\times(0,T)) \bigcap  C^{\alpha}(\pa {\Omega}\times(0,T))$ and $b(x,t), f(x,t) \in C^{\alpha}\left(\partial \Omega\times (0,T)\right)$. The boundary term $\pa_tv + \pa_\nu v$ represents the process of thermal contact on the boundary, see \cite{1975Crank}. 
 
Furthermore, taking
$$
v=u^p, a= \frac{1}{p}v^{\frac{1-p}{p}},
$$
then the equation \eqref{eq:boundary diffusion} can be treated as the linearized equation of a non-linear boundary diffusion equation
\begin{equation}\label{eq:nonlinear boundary diffusion}
 \left\{
\begin{aligned}
  \Delta u  &=   0  \quad  \mbox{in }     \Omega\times (0,\infty),     \\
   \partial_{t}u^{p}&=   -  \partial_{\nu}u  -  b u^p     \quad  \mbox{on }     \partial \Omega\times (0,\infty).
\end{aligned}
\right.
\end{equation}
This non-linear boundary  diffusion equation \eqref{eq:nonlinear boundary diffusion} is a boundary equation of porous medium type and it arises from the boundary heat control problems which are very important in thermodynamics. When $n\ge 3$, $p=\frac{n}{n-2}$ and $b=-\frac{n-2}{2(n-1)}H$ with $H$  being the mean curvature of $\pa \om$, it is the unnormalized boundary Yamabe flow; see  Brendle \cite{brendle2002generalization} and Almaraz \cite{Alm}. 
With initial condition
\be \label{eq: boundary diffusion initial condition u_0}
 u  =   u_0 >0 \quad  \mbox{on }     \partial \Omega\times \{ t=0 \} , u_0 \in C^{\infty}(\pa \om),
\ee
\eqref{eq:nonlinear boundary diffusion}-\eqref{eq: boundary diffusion initial condition u_0} formulate a non-linear diffusion problem. In 2002, in order to search for a conformal metric with vanishing scalar curvature in the interior and constant mean curvature at the boundary, Brendle \cite{brendle2002generalization} analyzed the long time behavior of the solution of the boundary Yamabe problem \eqref{eq:nonlinear boundary diffusion}-\eqref{eq: boundary diffusion initial condition u_0} when $p=\frac{n}{n-2}$. Recently in 2024, the existence, lower and upper bounds of the positive smooth solution of the non-linear boundary diffusion problem \eqref{eq:nonlinear boundary diffusion}-\eqref{eq: boundary diffusion initial condition u_0} were studied by Jin-Xiong-Yang \cite{2024JXY} and the long time behavior was  classified and illustrated.  Especially, they proved the short time existence of smooth positive solution to the problem \eqref{eq:nonlinear boundary diffusion}-\eqref{eq: boundary diffusion initial condition u_0} through Sobolev theory and implicit function theorem. The long time existence was obtained by extending such solution whenever they were positive, and the asymptotics of the positive smooth solution of  \eqref{eq:nonlinear boundary diffusion}-\eqref{eq: boundary diffusion initial condition u_0} for both $0<p<1$ and $1<p<\frac{n}{n-2} $ are obtained with sharp convergence rates.

Early in 1972, the physical model of boundary heat control was carried out by Duvaut-Lions \cite{DL1972}. Later in 2010, Athanasopoulos-Caffarelli \cite{2010Continuity} studied the boundary heat control model  
\begin{equation}\label{eq:BPME}
 \left\{
\begin{aligned}
& \alpha\partial_{t}u  =\Delta u \quad                           \mbox{in }   \Omega \times (0,T],\\
&-\partial_\nu u  \in  \partial_{t}(\beta(u)) \quad    \mbox{on }   \Gamma \times  (0,T] , \\
&u              =  0  \quad   \mbox{on } \left( \partial \Omega - \Gamma \right) \times [0,T],\\
&u(x,0)=u_0(x)    \quad      \mbox{on }  \Omega.
\end{aligned}
\right.
\end{equation}
They proved the H\"older continuity of the weak solution to \eqref{eq:BPME} for $\beta(u)=u^q,0<q<1$, and the sign ``$\in$" be ``$=$",
which is an approximation to the Stefan problem described in \cite{DL1972}. The mathematical model \eqref{eq:BPME} can be used to describe the temperature in ice-water mixture. In \cite{2010Continuity}, the authors used some barrier functions that are exquisitely built to employ the comparison principle. Then together with the aid of DeGiorgi type iteration, they obtained $L^{\infty}$ estimate and continuity (with proper modulus) of the weak solution of \eqref{eq:BPME} with $\beta(u)$ satisfying some certain growth conditions.

The boundary diffusion problem \eqref{eq:nonlinear boundary diffusion}-\eqref{eq: boundary diffusion initial condition u_0} is like a ``lite" version of the above heat control problem \eqref{eq:BPME}. Under this circumstance, we can view the boundary diffusion equation \eqref{eq:boundary diffusion} as a fractional diffusion equation on a compact manifold $\pa \Omega$. If we denote $\mathscr{B}:  H^{\frac12}(\partial \Omega) \rightarrow H^{-\frac12}(\partial \Omega)$ as the Dirichlet to Neumann boundary operator, that is, for any $u \in H^{\frac12}(\partial \Omega) $,
\begin{equation}\label{eq:DN}
\mathscr{B}u:= \frac{\partial}{\partial \nu}U\Big{|}_{\partial \Omega},
\end{equation}
where $U$ satisfies $\Delta U=0$ in $\Omega$ and $U=u$ on $\pa \om$, then \eqref{eq:boundary diffusion} can be rewritten as
\begin{equation}\label{eq:DtN diffusion}
   \partial_{t}v+ \mathscr{B} (v^m)       + bv =  f   
      \quad  \mbox{on }     \partial \Omega\times (0,T),
\end{equation}
when $a=mv^{m-1}$. Therefore, it follows from Chang-Gonz\'alez \cite{ChangG} that the equation \eqref{eq:boundary diffusion} can be viewed as a linearization of a type of fractional porous medium equation  with a non-local operator $\mathscr{B} $ on the manifold $\partial\Omega$ of  principal symbol the same as the $1/2-$Laplace operator on $\partial\Omega$. 
If 
$$
         \ a= mv^{m-1},
$$
and 
$$
\Omega=\R^n_+=\{x=(x',x_n): x_n>0\}, \  b\equiv 0,\  f\equiv 0,
$$ 
then \eqref{eq:boundary diffusion} becomes the fractional porous medium equation on the Euclidean space 
\begin{equation}\label{eq:fpme1}
   \partial_{t}v +  (-\Delta)^{\sigma}v^m  =  0    \quad  \mbox{on }     \R^{n-1}\times (0,\infty)
\end{equation}
with $\sigma=1/2$. This equation \eqref{eq:fpme1} with $\sigma=1/2$ had been systematically studied by de Pablo-Quir\'os-Rodr\'iguez-V\'azquez \cite{PQRV}, which was later extended to the general fractional Laplace operator $(-\Delta)^{\sigma}$ for all $\sigma\in (0,1)$ by themselves in \cite{PQRV2}. V\'azquez \cite{Vaz}  established the existence, uniqueness of the fundamental solutions to \eqref{eq:fpme1}, and obtained some asymptotics properties. When $m$ is the critical Sobolev exponent, Jin-Xiong \cite{JX2014} studied the asymtotic behavior of the solutions to \eqref{eq:fpme1}. The weighted global integral estimates for \eqref{eq:fpme1} were computed by Bonforte-V\'azquez \cite{BV2014}. These estimates were improved by V\'azquez-Volzone \cite{JL2013Optimal}. V\'azquez-de Pablo-Quir\'os-Rodr\'iguez \cite{VPQR} proved the weak solutions of \eqref{eq:fpme1} are classical for all positive time.

If we take 
\begin{equation*}
u^p=u^{\frac{1}{m}}=v,       \ a= mv^{m-1} , \ f\equiv 0,
\end{equation*} 
then \eqref{eq:boundary diffusion} becomes the non-linear diffusion equation \eqref{eq:nonlinear boundary diffusion}. Notably, the proof of the short time existence of smooth positive solution to the problem \eqref{eq:nonlinear boundary diffusion}-\eqref{eq: boundary diffusion initial condition u_0} was guided by the observation that 
\be\label{eq:positive bounds}  
c_0 \le u \le C_0
\ee
for some positive constants $c_0$ and $C_0$ on a small period of time. Furthermore, these positive lower and upper bounds were well established in Lemma \textcolor{blue}{3.1} of \cite{2024JXY}.
The similar idea was used early in 2002, Lemma \textcolor{blue}{3.4} of \cite{brendle2002generalization}, where Brendle proved the short time existence of smooth positive solution for boundary diffusion equation \eqref{eq:boundary diffusion} with initial datum
\be \label{eq: boundary diffusion initial condition v_0}
 v  =   v_0 := u_0^{\frac{n}{n-2}}>0 \quad  \mbox{on }     \partial \Omega\times \{ t=0 \} ,
\ee
and the relation that
$$
 u^{\frac{n}{n-2}}=v , \   a= (n-1)v^{-\frac{2}{n}},\ b=\frac{n}{2}H_0 v^{-\frac{2}{n}}  , \ f\equiv 0,
$$
where $H$ is the mean curvature of $\pa \Omega$ and $\bar{H}:=\int_{\pa \Omega} H dS / \int_{\pa \Omega}  dS$ is the average value of the mean curvature $H$.

Inspired by \eqref{eq:positive bounds}, it naturally drives us to seek Schauder type estimates of classical solution of \eqref{eq:boundary diffusion} under condition
\begin{equation}\label{eq:bound of coefficients}
\begin{array}{cc}
& 1/\Lambda   \le a(x,t)  \le \Lambda,\\
&|a(x,t)|_{C^{\alpha}(\bar{ \Omega}  \times [0,T])}+|b(x,t)|_{L^{\infty}(\pa \Omega \times [0,T])}+|f(x,t)|_{C^{\alpha}(\pa \Omega \times [0,T])} \le \Lambda, \\
&|\pa \om|_{C^{1+\alpha}} \le \Lambda
\end{array}
\end{equation}
where $\Lambda >1$ is a constant. 

For the global equation \eqref{eq:DtN diffusion} with a non-local operator, one can get estimates by using the ellipticity of the operator $\mathscr{B} $ globally on $\partial \Omega$.  In \cite{2024JXY}, we obtained $L^{\infty}$ estimate of the solutions to \eqref{eq:nonlinear boundary diffusion} by cutting off only on time and applying Moser's iteration. For Schauder type estimates, it is not applicable only cutting off on time, since we need to consider the equation locally on space as well. Locally investigating the boundary diffusion equation \eqref{eq:boundary diffusion}, we will lose the structure of the non-local operator
$$
\| \mathscr{B} u  \|_{L^2{(\pa \Omega})}   \sim  \| \nabla u  \|_{L^2{(\pa \Omega})} .
$$ 
This non-local structure is crucial for applying Sobolev theory and promoting regularity of weak solutions, see \cite{brendle2002generalization}. 

On the other hand, in the local configuration, we can not lift regularity by the non-local operator $\mathscr{B}$, making it insufficient to find a proper weak solution through traditional energy estimate. Therefore, it is necessary to provide a perspective to localize such non-local boundary diffusion equation, well posing the local version of the global equation \eqref{eq:boundary diffusion} which is generated by cutting off on space, finding corresponding Caccioppoli type inequalities for suitable weak solutions as preparation for the Schauder type estimates. Lemma \ref{lem:homogeneous-caccioppoli-1.5}-\ref{lem:homogeneous-iteration} are consequences from dealing with these issues.

Unlike the general parabolic system, this boundary diffusion equation \eqref{eq:boundary diffusion} has two different diffusion on  the interior domain and  the boundary. When we want to engage into Schauder estimate to such problem, we encounter much difficulty due to the scaling difference between the equations inside the domain and on the boundary. Since the dimensions of the interior domain and the boundary are different, it annoys us when we calculate the average integrals. And we find it neither sufficient to merely estimate the integral on the boundary, nor to compute the integral on the interior domain. Luckily, we find a quantity in which  the local integrals both on the interior domain and the boundary are put together, ensuring the successful launch of Schauder-type iteration to such two types of integrals at the same time to reach the desired estimates in Theorem \ref{thm:estimate campanto norm}.

As an important application of the Schauder theory, the Schauder estimates of  \eqref{eq:boundary diffusion} are crucial for understanding the existence theory to the positive classical solution to the non-linear problem \eqref{eq:nonlinear boundary diffusion}-\eqref{eq: boundary diffusion initial condition u_0}. With Schauder estiamtes of \eqref{eq:boundary diffusion} at hand, we  prove the short time existence of $C^{1+\alpha}$-type solution of \eqref{eq:nonlinear boundary diffusion}-\eqref{eq: boundary diffusion initial condition u_0} by Leray-Schauder theory in Theorem \ref{thm:nonlinear boundary diffusion non-homogeneous short time existence }.

To establish Schauder type estimates for the boundary diffusion equation \eqref{eq:boundary diffusion} in a general local setting, we locally flatten the boundary $\pa \Omega$. Since the boundary $\pa \Omega$ is of class $C^{1+\alpha}$, there is a $C^{1+\alpha}$ function $\varphi: {\R}^{n-1} \rightarrow \R$ such that 
$$
B(x^0,R_0)\cap\Omega=\{x\in B(x^0,R_0) | x_n > \varphi(x_1,\cdots,x_{n-1} )   \}
$$
for any $x^{0}\in \pa \Omega$. We change variables
\begin{align*}
& y=\Phi(x)= ( x_1-x^{0}_1,\cdots,x_{n-1}-x^{0}_{n-1},x_n-\varphi(x_1,\cdots,x_{n-1}) ), \\
&\tau = t-t^0
\end{align*}
for some $(x^{0},t^{0})\in \pa \Omega \times (0,T)$.
Correspondingly, it gives
\begin{align*}
& x=\Psi(y)=(y_1+x^{0}_1,\cdots,y_{n-1}+x^{0}_{n-1},y_n+\varphi(y_1+x^{0}_1,\cdots,y_{n-1}+x^{0}_{n-1})), \\
&\ t= \tau+t^0.
\end{align*}
We choose $\tilde{R}$ small such that 
$$
B^{+}_{\bar{R}}=\{ y\in B(\tilde{R},0)| y_n >0    \} \subset  \Phi(\Omega \cap B(x^0,R_0)).
$$
Taking
$$
R = \min  \{ \tilde{R}, t^0, T-t^0 \},
$$

$$
\tilde{v}(y,\tau)=v( \Psi(y), \tau + t^0  ),
$$
and using the Einstein summation convention the sum of the indexes is from $1$ to $n$, we derive the equation of $\tilde{v}$ as
\begin{equation}\label{eq:boundary diffusion local}
 \left\{
\begin{aligned}
  \pa_i(\tilde{a} a_{ij}(y)\pa_j\tilde{v})  &=   0  \quad  \mbox{in }     B^+_R \times I_R,     \\
   \partial_{\tau}\tilde{v}-\tilde{a} \frac{1}{\sqrt{  |D_{x'} \varphi|^2+1 }} a_{nj}(y)\pa_j\tilde{v} + \tilde{b}\tilde{v} &=  \tilde{f}    \quad  \mbox{on }     \pa ' B^+_R \times I_R
\end{aligned}
\right.
\end{equation}
where we denote  $y=(y',y_n), \pa ' B^+_R=B_R\cap\{y_n=0 \} ,I_R=(-R,R)$, correspondingly
$\tilde{a}(y,\tau)=a(x,t),\tilde{b}(y,\tau)=b(x,t),\tilde{f}(y,\tau)=f(x,t)$ and the partial derivative $\pa_i$ is taken with respect to variable $y_i$. Here, one can compute
\begin{equation}\label{eq:flatten coefficient matrix}
a_{ij}(y)=\begin{pmatrix}
1 & 0 & \cdots & 0 & -\frac{\pa \varphi}{\pa x_1} \\
0 & \ddots &  & 0 & \vdots \\
\vdots &  & \ddots & 0 & \vdots \\
0& \cdots & 0 & 1 & -\frac{\pa \varphi}{\pa x_{n-1}}\\
-\frac{\pa \varphi}{\pa x_1}& \cdots &\cdots & -\frac{\pa \varphi}{\pa x_{n-1}} & |D_{x'} \varphi|^2+1
\end{pmatrix},
\end{equation}
with $\varphi=\varphi(\Psi(y)_1,\cdots,\Psi(y)_{n-1})$. Locally flattening the boundary, we find an elliptic system with divergence form in the equation \eqref{eq:boundary diffusion local}. Denote
$$
\tilde{\varphi } := \frac{1}{\sqrt{  |D_{x'} \varphi|^2+1 }},
$$
and extend it onto $B_R^+$ so that it is a constant function with respect to $x_n$. We rewrite \eqref{eq:boundary diffusion local} into a more general form with variable $(x,t)$ as 
\begin{equation}\label{eq:boundary diffusion local standard}
 \left\{
\begin{aligned}
  \pa_i(a(x,t)a_{ij}(x,t)\pa_j u)  &=   0  \quad  \mbox{in }    Q^+_R= B^+_R \times I_R,     \\
   \partial_{t}u-a(x,t)\tilde{\varphi}(x)  a_{nj}(x,t)\pa_ju + bu &=  f \quad  \mbox{on }  \pa ' Q^+_R =  \pa ' B^+_R \times I_R.
\end{aligned}
\right.
\end{equation} 
Besides, the condition \eqref{eq:bound of coefficients} is converted as 
\begin{equation}\label{eq:bound of coefficients-local}
\begin{array}{cc}
& \frac{1}{\Lambda}   \le a(x,t) \le \Lambda,\\
&|a(x,t)|_{C^{\alpha}(\pa ' Q^+_R )}
+|\tilde{\varphi}(x)|_{C^{\alpha}(\pa ' B^+_R
)}+|b(x,t)|_{C^{\alpha}(\pa ' Q^+_R)} +|a_{ij}(x,t)|_{C^{\alpha}(\pa ' Q^+_R  )} \le \Lambda,\\
& \frac{1}{\Lambda}  |\xi|^2  \le a_{ij}(x,t)\xi_i \xi_j  \le \Lambda |\xi|^2 ,  \quad   \forall \ \xi \in \R^n , \\
&  a_{ij}(x,t)=a_{ji}(x,t), \quad \forall i,j \in \{1,\cdots,n\}
\end{array}
\end{equation}
for all $(x,t) \in  \pa'Q^+_R $ uniformly.

The upper and lower bounds of $a(x,t)$ in \eqref{eq:bound of coefficients-local} correspond to the ellipticity of such local equation, making \eqref{eq:boundary diffusion local standard} looks like a parabolic equation. But, in \eqref{eq:boundary diffusion local standard}, we see that it possesses the scaling distinct to the scaling of the heat equation, only giving the information of $\pa_t u$ and $\pa_n u$ on $\pa ' Q^+_R$. Another bad thing is that in the local equation, we lose the Dirichlet to Neumann boundary operator as defined in \eqref{eq:DN}, making it impossible to use the non-local operator $\mathscr{B}$ to promote the regularity. It seems that we lose some ellipticity on $\pa' Q_R^+$ when we break the global equation into pieces.

In order to well pose the local equation \eqref{eq:boundary diffusion local standard} without the global fractional diffusion structure, we propose weak compatible-interior-energy solutions in stead of the usual weak solutions as a view to locally address \eqref{eq:boundary diffusion}.

\begin{defn}\label{con:compatibility}(Compatible-Interior-Energy solution)

For  a  weak solution $u \in  H^{1}\left(\pa ' Q^+_R \right) \bigcap L^{\infty}\left(I_R:H^1(B_R^+)\right) $ of \eqref{eq:boundary diffusion local standard}, we say it is a  compatible-interior-energy solution (briefly written as CIE solution), if it additionally satisfies
\be \label{eq:compatibility}\tag{E}
 \int_{  Q^+_{R}  } |u_t|^2  \le C(n,\Lambda)\left( \int_{  Q^+_R  } |D_x  u|^2 + \frac{1}{R^2}\int_{  Q^+_R  } | u|^2 \right).
\ee

\end{defn}
We shall prove the local Schauder estimates regarding this type of solutions.

In terms of solution $u$ to \eqref{eq:boundary diffusion local standard}, $\pa_t u$ may fail to exist in $Q^+_R$ in weak sense and it only admits very weak solutions $u\in L^2\left(I_R: H^1 (  B^+_R) \right) \bigcap L^{\infty}\left(I_R:L^2(\pa' B^+_R)\right) $ if it is lack of  energy compatibility \eqref{eq:compatibility}. Thus, CIE solutions are considered in Section \ref{sec:weak solution}, in stead of the usual weak solutions.

In particular, if $u$ is a solution to \eqref{eq:boundary diffusion local standard}, then it should maintain compatible interior energy for space and time, that is
\be \label{eq:compatible eq}
u_t  \sim    a_{nj}\pa_j u +u \quad \mbox{ in } Q^+_R, \mbox{ when } f\equiv 0,
\ee
since $a_{nj}\pa_j u $ is well defined in $Q^+_R$ as well. The corresponding weak solutions should weakly satisfy \eqref{eq:compatible eq} and such relation is maintained for CIE solutions. It is noticed that in \eqref{eq:flatten coefficient matrix} and \eqref{eq:bound of coefficients-local}, all the coefficients are functions independent of $x_n$ and the norms of all the coefficients are given only on $\pa'Q^+_R$. So \eqref{eq:compatible eq} is necessary to get the estimates in $Q_R^+$.

\begin{rem}\label{rem:global compatible}
For the global equation \eqref{eq:boundary diffusion} with coefficients satisfying \eqref{eq:bound of coefficients}, one would see $u\in H^{K+1}(\pa \om \times (0,T))=W^{K+1,2}(\pa \om \times (0,T))$ if $f \in H^{K}(\pa \om \times (0,T))$. And we would expect $u(\cdot, t) \in H^{K+2}(\om)$ for $t \in (0,T)$. This global structure allow us to gain regularity on $\pa \om \times (0,T)$ both in time and space, so it is not imminent to have estimate of $\pa_t u$ in $\om \times (0,T)$, see \cite{2024JXY}. Especially, the interior profile of $\pa_t u$ can be characterized by the Green's function of $\om $ in the global case \eqref{eq:boundary diffusion}, but such representation is absent in the local case \eqref{eq:boundary diffusion local standard}.
\end{rem}

CIE solution contains information of $u_t$ inside $\om \cap B_R$ which is compatible with the information on the boundary, ensuring the existence results proved in Section \ref{sec:weak solution}, whose configuration produces an existence result of a new type of CIE solution that belongs to $C^{1+\alpha}(\pa' Q^+_R) \bigcap C^{1+\alpha}( Q^+_R).$

In Section \ref{sec:constant coefficients}, we are able to get the estimate of $\pa_t u$ in $ Q^+_R$ for CIE solutions while considering such solutions in local situation \eqref{eq:boundary diffusion local standard}. Then it is natural to use two additional test functions to find the connection between
$$
\int_{\pa' Q^+_R} |\pa_n u|^2 \mbox{ and } \int_{\pa' Q^+_R} |\nabla u|^2 ,
$$
providing us the estimate of $\int_{\pa' Q^+_R} |\nabla u|^2$.

Normally, if we use Moser's iteration to get $L^{\infty}$ estimate of the weak solution to \eqref{eq:boundary diffusion local standard}, then one only need to work on the interior term
$$
\int_{ Q^+_R} u^2
$$
since  the estimate of 
$$
\sup_{Q^+_{R/2}} u 
$$
gives the estimate of $\| u \|_{L^{\infty}(\pa'Q^+_{R/2})} $. Differently, the idea to use Campanato theory to characterize the H\"older semi norm of the derivatives of the solution $u$, is that to have the estimates on the boundary term, controlling  $\int_{\pa' Q^+_R} |\nabla u|^2$ through $\int_{\pa ' Q^+_R} u^2$. However, we are not able to control such term with $\int_{\pa ' Q^+_R} u^2$ solely. Therefore, instead working on the boundary term
$$
\int_{\pa ' Q^+_R} u^2
$$
we need to consider the integrals both on the interior and on the boundary at the same time, that is working on
$$
\int_{ Q^+_R} u^2+ \int_{\pa ' Q^+_R} u^2,
$$
when iterating for the Schauder type estimates.  Now we encounter two types of integral estimates. The first is on a part of the interior domain, the second is on a flatten part of the boundary. These two types of integrals are put together to generate the estimates, see  Section \ref{sec:Schauder}.

Here we state our Schauder estimate for the localized equation \eqref{eq:boundary diffusion local standard}.

\begin{thm}\label{thm:schauder estimate local general}
If $u \in C^{1+\alpha}(\pa' Q^+_R) \bigcap C^{1+\alpha}( Q^+_R)$ is a CIE solution to \eqref{eq:boundary diffusion local standard} with the coefficients satisfying \eqref{eq:bound of coefficients-local}, then we have Schauder type estimate
\be\label{eq:schauder estimate local general}
\begin{split}
&[u]^2_{C^{1+\alpha}(\pa ' Q^+_{R/2} )} +[u]^2_{C^{1+\alpha}( Q^+_{R/2} )}\\
&\le C(n,\Lambda,\alpha)\left( \|u\|^2_{H^1(\pa ' Q^+_R )}+\|u\|^2_{H^1( Q^+_R )} + \| u \|^2_{ C^{\alpha}(\pa ' Q^+_R )} + \| f \|^2_{ C^{\alpha}(\pa ' Q^+_R )}\right).
\end{split}
\ee

\end{thm}

\begin{rem}\label{rem:localize problem general case}

For the loacl Schauder estimate Theorem \ref{thm:schauder estimate local general}, $ a_{ij}(x,t) $ does not necessarily have to be exactly the form \eqref{eq:flatten coefficient matrix}. Besides, our method of this Schauder type estimate for equation \eqref{eq:boundary diffusion local standard}  works for equation \eqref{eq:boundary diffusion local} as well, since the idea is of freezing coefficients.

\end{rem}

Finally, we look for the global estimate of solutions in $C^{1+\alpha}(\pa \Omega \times (0,T))$ to \eqref{eq:boundary diffusion} through the local estimate \eqref{eq:schauder estimate local general} with coefficients of \eqref{eq:bound of coefficients}. Thus we conclude that the following global Schauder estimates are natural consequence of Theorem\ref{thm:schauder estimate local general}.

\begin{defn}\label{defn:weak solution global}

We say $v$ is a weak solution  to \eqref{eq:boundary diffusion}, if 

\item[(i)] $v \ge 0$
\item[(ii)] $ v \in C([t_1,t_2]:L^1(\pa \om)) \bigcap L^2([t_1,t_2]:H^1(\om))$ 
\item[(iii)] 
\be \nonumber
\begin{split}
\int_{t_1}^{t_2} \int_{\pa \om} \left( v \pa_t \phi -bv\phi +f\phi  \right) \,\ud \sigma_x  \ud t  - \int_{t_1}^{t_2} \int_{ \om} a \nabla v \cdot \nabla \phi \,\ud x \ud t \\
= \int_{\pa \om } v(x,t_2)\phi(x,t_2) \, \ud \sigma_x - \int_{\pa \om } v(x,t_1)\phi(x,t_1) \, \ud \sigma_x
\end{split}
\ee
hold for any $0<t_1<t_2<T$ and $\phi(x,t) \in C^{\infty}(\overline{\om}\times [t_1,t_2])$. $\sigma_x$ denotes the element surface area for $x \in \pa \om$.

\end{defn}

\begin{defn}\label{defn:compatibility-global}

We say  a weak solution $v$ to \eqref{eq:boundary diffusion} is a CIE solution, if \eqref{eq:compatibility} holds in every locally flatten equation in $Q_R^+(x,t)$, for any $(x,t) \in \pa \om \times (0,T), R>0$.

\end{defn}

\begin{thm}\label{thm:schauder estimate global}
If $v \in C^{1+\alpha}(\om \times (0,T)) \bigcap C^{1+\alpha}(\pa \om \times (0,T))$ is a CIE solution to \eqref{eq:boundary diffusion} with the coefficients satisfying \eqref{eq:bound of coefficients} holding for every local equation in $Q_R^+(x,t), (x,t) \in \pa \om \times (0,T)$, then we have Schauder type estimate
\be\label{eq:schauder estimate global}
\begin{split}
&[v]^2_{C^{1+\alpha}(\pa \Omega \times (0,T) )}+[v]^2_{C^{1+\alpha}( \Omega \times (0,T) )} \\
&\le C(n,\Lambda,\alpha)\left( \|v\|^2_{H^1(\pa \Omega \times (0,T) )}+\|v\|^2_{H^1( \Omega \times (0,T)  )} + \| v \|^2_{ C^{\alpha}( \pa \Omega \times (0,T) )} + \| f \|^2_{ C^{\alpha}(\pa \Omega \times (0,T))}\right).
\end{split}
\ee

\end{thm}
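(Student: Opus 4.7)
The strategy is to reduce Theorem \ref{thm:schauder estimate global} to the local estimate Theorem \ref{thm:schauder estimate local general} by a finite covering argument: near $\pa\om$ the flattening diffeomorphism of Section \ref{sec:intro} allows Theorem \ref{thm:schauder estimate local general} to be applied verbatim, and in the interior classical divergence-form elliptic Schauder theory is invoked slice-by-slice in time.

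For a boundary point $(x^0,t^0)\in\pa\om\times(0,T)$, take $R=\min\{\bar R,t^0,T-t^0\}$ and apply the diffeomorphism $\Phi$ from Section \ref{sec:intro}. The pullback $\tilde v(y,\tau)=v(\Psi(y),\tau+t^0)$ solves \eqref{eq:boundary diffusion local standard} with coefficients $\tilde a,\tilde b,\tilde f, a_{ij}(y), \tilde\varphi$ satisfying \eqref{eq:bound of coefficients-local}, the constants depending only on $\Lambda$ (the $C^{2+\al}$-bound on $\pa\om$ encoded in $\Lambda$ is precisely what makes $a_{ij}$ and $\tilde\varphi$ of class $C^{\al}$). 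The hypothesis that \eqref{eq:compatibility} holds on every such $Q_R^+$ lets me invoke Theorem \ref{thm:schauder estimate local general} for $\tilde v$, and undoing the change of variables via $\Psi=\Phi^{-1}$---which preserves $C^{1+\al}$-seminorms up to a $\Lambda$-controlled factor---yields a local $C^{1+\al}$ bound on a neighborhood of $(x^0,t^0)$ in $(\om\cup\pa\om)\times(0,T)$ by the right-hand side of \eqref{eq:schauder estimate global}.

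For an interior point $(x^0,t^0)$ with $d:=\mathrm{dist}(x^0,\pa\om)>0$, the equation $\mathrm{div}(a(\cdot,t)\nabla v(\cdot,t))=0$ is uniformly elliptic in divergence form with $C^{\al}$ coefficient at each frozen $t$, and classical interior Schauder controls $[v(\cdot,t)]_{C^{1+\al}_x(B(x^0,d/2))}$ by $\|v(\cdot,t)\|_{L^2(\om)}$. To upgrade to joint space-time regularity I would analyze the time difference $w_h(x,t)=v(x,t+h)-v(x,t)$, which satisfies
\[
\mathrm{div}\bigl(a(\cdot,t+h)\nabla w_h\bigr)= -\mathrm{div}\bigl((a(\cdot,t+h)-a(\cdot,t))\nabla v(\cdot,t)\bigr)\quad\text{in }\om,
\]
with $|a(\cdot,t+h)-a(\cdot,t)|\le\Lambda|h|^\al$ and boundary trace $w_h|_{\pa\om}$ already controlled by $|h|^\al$ times the RHS of \eqref{eq:schauder estimate global} from the previous step; the elliptic maximum principle combined with interior Schauder then transfers this $|h|^\al$ rate into the interior. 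A finite cover of $\overline{\om}\times(0,T)$ by such boundary and interior cylinders, together with summation of the local estimates, produces \eqref{eq:schauder estimate global} with a constant depending only on $n,\Lambda,\al$. The main obstacle is this interior step: in $\om$ the equation has no explicit $\pa_t v$ term, so the time regularity of $v$ inside $\om$ must be imported from $\pa\om$ through Dirichlet solvability of the frozen-time elliptic operator, and the compatible condition \eqref{eq:compatibility}---which couples $\pa_t v$ to the spatial derivatives of $v$ in $Q_R^+$---is what makes this importation quantitatively consistent with the stated bound.
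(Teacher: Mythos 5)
The paper offers no written proof of Theorem \ref{thm:schauder estimate global}; it simply declares the global bound a ``natural consequence'' of Theorem \ref{thm:schauder estimate local general}. Your boundary step is exactly the covering argument the paper has in mind: flatten near each $(x^0,t^0)\in\pa\om\times(0,T)$, invoke Theorem \ref{thm:schauder estimate local general} on $Q_{R}^+$ under the hypothesized compatibility, and transfer the estimate back via $\Psi$. That part is correct and aligned with the paper's intent, and since the half-cylinders $Q_R^+$ have $R$ bounded below on any compact time interval away from $\{0,T\}$, they cover a uniform collar of $\pa\om$.

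The genuine gap is in your interior time-regularity step, and it is more serious than a detail to be filled in. You apply interior elliptic Schauder to $w_h=v(\cdot,t+h)-v(\cdot,t)$, which solves $\mathrm{div}\bigl(a(\cdot,t+h)\nabla w_h\bigr)=-\mathrm{div}\,F_h$ with $F_h=(a(\cdot,t+h)-a(\cdot,t))\nabla v(\cdot,t)$. To conclude a bound on $[\pa_tv]_{C^\alpha}$ in the deep interior one needs $\|w_h\|_{C^{1+\alpha}_x}/|h|$ to stay bounded as $h\to0$, which requires $\|F_h\|_{C^\alpha_x}=O(|h|)$ (or at least $o(1)$ together with extra structure). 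But $a\in C^\alpha$ in $(x,t)$ only gives $\|F_h\|_{L^\infty}\le C|h|^\alpha\|\nabla v\|_\infty$, while $[F_h]_{C^\alpha_x}\le \bigl([a(\cdot,t+h)-a(\cdot,t)]_{C^\alpha_x}\bigr)\|\nabla v\|_\infty+\cdots$ is bounded by a constant \emph{independent of} $h$ (nothing in the hypotheses makes the time-increments of $a$ small in the spatial Hölder seminorm). Interpolating gives $\|F_h\|_{C^{\alpha'}_x}\lesssim|h|^{\alpha-\alpha'}$ for $\alpha'<\alpha$, which is still not $O(|h|)$, so the quotients $w_h/h$ do not close in $C^{1+\alpha'}_x$. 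In short, with only $a\in C^\alpha_{(x,t)}$, the difference-quotient argument cannot by itself transport $\pa_t v\in C^\alpha$ from $\pa\om$ into the deep interior, and the maximum principle only yields $\|w_h\|_\infty\lesssim|h|^\alpha$, i.e.\ $v\in C^\alpha_t$ rather than $C^{1+\alpha}_t$. You should either restrict the interior conclusion to the boundary collar covered by the $Q_R^+$ (where Theorem \ref{thm:schauder estimate local general} already gives the full joint estimate), or identify an additional mechanism (e.g.\ extra time-regularity of $a$, or a representation of $\pa_t v$ via the interior Green's function as Remark \ref{rem:global compatible} hints) that delivers the deep-interior estimate; the proposal as written asserts but does not supply that mechanism.
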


The consequence of the above Schauder estimate is the existence of the CIE solution of the the global equation \eqref{eq:boundary diffusion} in
$C^{1+\alpha}(\om \times (0,T)) \bigcap C^{1+\alpha}(\pa \om \times (0,T))$, guided by the compatibility \eqref{eq:compatibility}. These applications are presented in Section \ref{sec:application}.

\section{Very weak solutions and weak solutions}\label{sec:weak solution}

In this section, we introduce some related definitions for the local equation \eqref{eq:boundary diffusion local standard} as preparation. The energy spaces similar to those that are used for parabolic equations gives the notation of very weak solution.

\begin{defn}\label{defn:veryweak solution of localized problem}
We say that $u\in L^2\left(I_R: H^1 (  B^+_R) \right) \bigcap L^{\infty}\left(I_R:L^2(\pa' B^+_R)\right) $ is a very weak solution to \eqref{eq:boundary diffusion local standard} if it satisfies
\be\label{eq:very weak solution of localized problem}
-\int_{\pa' Q^+_R } u \pa_t \phi   + \int_{Q^+_R}a \tilde{\varphi} a_{ij}\pa_iu\pa_j \phi   = \int_{\pa' Q^+_R }\left(f-bu  \right)\phi - \int_{Q^+_R}a a_{ij}\pa_iu\pa_j\tilde{\varphi}   \phi
\ee
for some $f\in L^2\left(  \pa' Q^+_R   \right)$ and every $ \phi \in C_c^{\infty}(Q_R)$. In addition, u is called an entire solution to 
\begin{equation*}
 \left\{
\begin{aligned}
  \pa_i(a(x,t)a_{ij}(x,t)\pa_j u)  &=   0  \quad  \mbox{in }    \R^n_+ \times \R,     \\
   \partial_{t}u-a(x,t) \tilde{\varphi}(x) a_{nj}(x,t)\pa_ju + bu &=  f \quad  \mbox{on }   \pa  \R^n_+ \times  \R
\end{aligned}
\right.
\end{equation*} 
if $u$ is a very weak solution to \eqref{eq:boundary diffusion local standard} for every $R>0$ and $\varphi$ is a constant function.
\end{defn}
For very weak solutions, one can employ DeGiorgi-Nash-Moser iteration to get $L^{\infty}$ estimate on $ \pa' Q^+_{R/2} $ as what has been done in \cite{2024JXY} and  further explore Harnack type inequality. But, for the Schauder type estimate, we have to work with weak solution, which is viewed as a `` Stronger'' version of the very weak solution, to get gradient estimate. 

\begin{defn}\label{defn:weak solution of localized problem}

$ u \in  H^{1}\left(\pa ' Q^+_R \right) \bigcap L^{\infty}\left(I_R:H^1(B_R^+)\right) $ is called a weak solution to \eqref{eq:boundary diffusion local standard} if it satisfies
\be\label{eq:weak solution of localized problem}
\int_{\pa' Q^+_R } \phi \pa_t u     - \int_{\pa' Q^+_R}a \tilde{\varphi} a_{nj}\pa_ju \phi  = \int_{\pa' Q^+_R }\left(f-bu  \right)\phi      
\ee
for some $f\in L^2\left(  \pa' Q^+_R   \right)$ and every $ \phi \in C_c^{\infty}(Q_R)$.

\end{defn}

The existence of the very weak solutions is guided by the natural energy estimate, as the existence of the CIE solutions is derived under the energy estimate and the compatibility \eqref{eq:compatibility}.

\begin{rem}\label{rem:steklov average}
In the study of weak solutions, sometimes we need to employ the Steklov average
$$
u_h(x,t):=\frac{1}{h}\int_t^{t+h}u(x,\tau)d\tau
$$
where $h>0$ and $u$ is a given function.

From \cite{1996Lie}, we know that if $u\in L^2\left(I_R: H^1 (  B^+_R) \right)$ and $\delta>0$, then
$$
u_h \to u, \quad \nabla u_h \to \nabla u \quad \mbox{in }L^2\left(  B^+_R \times (-R,R-\delta) \right) 
$$
as $h \to 0$. In addition, if $u$ is a very weak solution to \eqref{eq:boundary diffusion local standard}, then $u_h$ satisfies
\be\label{eq:steklov average of veryweak solution of localized problem}
\int_{\pa' Q^+_R }\phi  \pa_t u_h    + \int_{Q^+_R}\left(a \tilde{\varphi} a_{ij}\pa_iu\right)_h \pa_j \phi = \int_{\pa' Q^+_R }\left(f-bu- a a_{ij}\pa_iu\pa_j\tilde{\varphi}  \right)_h \phi 
\ee
for every $ \phi \in C_c^{\infty}(B_R \times (-R,R-h))$. More properties of Steklov averages can be found in \cite{2017CDG}.
\end{rem}
With regard to the interior estimate of the weak solution, we can use that $\pa_t u \in L^2(  \pa ' Q_{R}^+)$ and directly work with it, because equivalently we could work with the Steklov average $u_h$ of $u$ and later take the limit as $h \to 0$.

In order to get local Schauder estimates for \eqref{eq:boundary diffusion local standard}, we search estimates for corresponding equations with constant coefficients at the beginning. The related equations are as follow
\begin{equation}\label{eq:diffusion-local-constant coefficients-Non homogeous}
 \left\{
\begin{aligned}
  \pa_i(a_{ij}\pa_j u)  &=   \pa_iF_i \quad  \mbox{in }    Q^+_R= B^+_R \times I_R,     \\
   \partial_{t}u-  a_{nj}\pa_ju  &=  f \quad  \mbox{on }  \pa ' Q^+_R =  \pa ' B^+_R \times I_R.
\end{aligned}
\right.
\end{equation}
where  $a_{ij} $ are constant . And we shall also look into the homogeneous case of \eqref{eq:diffusion-local-constant coefficients-Non homogeous}
\begin{equation}\label{eq:diffusion-local-constant coefficients-homogeous}
 \left\{
\begin{aligned}
  \pa_i(a_{ij}\pa_j u)  &=   0  \quad  \mbox{in }    Q^+_R= B^+_R \times I_R,     \\
   \partial_{t}u-  a_{nj}\pa_ju  &= 0 \quad  \mbox{on }  \pa ' Q^+_R =  \pa ' B^+_R \times I_R.
\end{aligned}
\right.
\end{equation}
As previously done, we introduce the very weak and weak solutions to constant-coefficients equations \eqref{eq:diffusion-local-constant coefficients-Non homogeous} and \eqref{eq:diffusion-local-constant coefficients-homogeous} .
\begin{defn}\label{defn:veryweak solution of diffusion-local-constant coefficients-Non homogeous}
We say that $u\in L^2\left(I_R: H^1 (  B^+_R) \right) \bigcap L^{\infty}\left(I_R:L^2(\pa' B^+_R)\right) $ is a very weak solution to \eqref{eq:diffusion-local-constant coefficients-Non homogeous} if it satisfies
\be\label{eq:veryweak solution of diffusion-local-constant coefficients-Non homogeous}
-\int_{\pa' Q^+_R } u \pa_t \phi   + \int_{Q^+_R} a_{ij}\pa_iu\pa_j \phi = \int_{\pa' Q^+_R }\left(f+F_n\right)\phi + \int_{Q^+_R} F_i \pa_i\phi  
\ee
for some $f\in L^2\left(  \pa' Q^+_R   \right)$,  $F\in L^2\left(   Q^+_R   \right)^n$, $F_n \in L^2\left(  \pa' Q^+_R   \right)$ and every $ \phi \in C_c^{\infty}(Q_R)$. 
\end{defn}
For Schauder type estimates, we need to work on CIE solutions in $H^{1}\left(\pa ' Q^+_R \bigcup Q^+_R \right)$ in order to get the continuity information of $\pa_t u$ and $\nabla u $ on $\pa ' Q^+_R $. 
\begin{defn}\label{defn:weak solution of diffusion-local-constant coefficients-Non homogeous}
We say that $ u \in  H^{1}\left(\pa ' Q^+_R \right) \bigcap L^{\infty}\left(I_R:H^1(B_R^+)\right) $ is a weak solution to \eqref{eq:diffusion-local-constant coefficients-Non homogeous} if it satisfies
\be\label{eq:weak solution of diffusion-local-constant coefficients-Non homogeous}
\int_{\pa' Q^+_R }  \phi \pa_t u   - \int_{\pa'Q^+_R} a_{nj}\pa_ju \phi = \int_{\pa' Q^+_R }f\phi   
\ee
for some $f\in L^2\left(  \pa' Q^+_R   \right)$ and every $ \phi \in C_c^{\infty}(Q_R)$. 
\end{defn}
In homogeneous case, corresponding definition is written as
\begin{defn}\label{defn:weak solution of diffusion-local-constant coefficients-homogeous }
$ u \in  H^{1}\left(\pa ' Q^+_R \right) \bigcap L^{\infty}\left(I_R:H^1(B_R^+)\right) $ is called a weak solution to \eqref{eq:diffusion-local-constant coefficients-homogeous} if it satisfies
\be\label{eq:weak solution of diffusion-local-constant coefficients-homogeous}
\int_{\pa' Q^+_R }  \phi \pa_t u   - \int_{\pa'Q^+_R} a_{nj}\pa_ju \phi = 0 
\ee
for every $ \phi \in C_c^{\infty}(Q_R)$.
\end{defn}

 \section{Estimates under constant coefficients} \label{sec:constant coefficients}
 
In order to execute the Schauder type iteration, in this section, we first derive some  Caccioppoli type inequalities for the weak solutions of the equations with constant coefficients. Then we promote the regularity for CIE solutions, using these auxiliary estimates.

For the center $(0,0) \in \pa ' Q^+_{R}$, we denote
$$
\pa ' Q^+_\rho:= \pa ' Q^+_\rho(0,0)=\pa ' B^+_\rho \times (-\rho,\rho),
$$
and
$$
Q^+_\rho:= Q^+_\rho(0,0)=B^+_\rho \times (-\rho,\rho),
$$
for $\rho \in (0,R]$. We first investigate the homogeneous case \eqref{eq:diffusion-local-constant coefficients-homogeous}. By putting different test functions into \eqref{eq:weak solution of diffusion-local-constant coefficients-homogeous}, we get the desired Caccioppoli type inequalities. The method similar to the  parabolic equations provides us the estimates on
 $$
 \int_{ Q^+_\rho  } |D_x u|^2.
 $$
\begin{lem}\label{lem:homogeneous-caccioppoli-1}
If $u\in L^2\left(I_R: H^1 (  B^+_R) \right) \bigcap L^{\infty}\left(I_R:L^2(\pa' B^+_R)\right) $ is a weak solution to \eqref{eq:diffusion-local-constant coefficients-homogeous}, then there holds the following Caccioppoli inequality 
\begin{equation}\label{eq:homogeneous-caccioppoli-1}
  \begin{split}
      \sup_{I_\rho}  \int_{\pa ' B^+_\rho  } u^2          +    \int_{ Q^+_\rho  } |D_x u|^2    
                   \leq  C(n,\Lambda) \left(  \frac{1}{|R-\rho|^2} \int_{ Q^+_R  }  u^2  + \frac{1}{R-\rho}  \int_{ \pa' Q^+_R  }  u^2   \right)
  \end{split}
\end{equation}
for any $\rho \in (0,R]$.
\end{lem}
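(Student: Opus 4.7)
The plan is to derive a combined weak identity that bundles the interior divergence-form equation with the boundary flux condition, and then run a Caccioppoli argument with a product-form cutoff $\zeta_1(x)^2\chi(t)$ whose spatial and temporal factors are tuned independently to produce the two different scaling prefactors $(R-\rho)^{-2}$ and $(R-\rho)^{-1}$ on the right of \eqref{eq:homogeneous-caccioppoli-1}. Concretely, integrating the interior equation $\partial_i(a_{ij}\partial_j u)=0$ against any $\phi\in C_c^\infty(Q_R)$ and using that the outer unit normal of $B_R^+$ along $\partial' B_R^+$ is $-e_n$ gives
\begin{equation*}
\int_{Q_R^+} a_{ij}\partial_j u\,\partial_i\phi \;=\; -\int_{\partial' Q_R^+} a_{nj}\partial_j u\,\phi.
\end{equation*}
Replacing $a_{nj}\partial_j u$ by $\partial_t u$ via \eqref{eq:weak solution of diffusion-local-constant coefficients-homogeous} yields the combined identity
\begin{equation*}
\int_{\partial' Q_R^+} \phi\,\partial_t u \;+\; \int_{Q_R^+} a_{ij}\partial_j u\,\partial_i\phi \;=\; 0 \qquad \forall\,\phi\in C_c^\infty(Q_R),
\end{equation*}
which is the formula we will test against.

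Next, I would choose $\zeta_1\in C_c^\infty(B_R)$ with $\zeta_1\equiv 1$ on $B_\rho$ and $|D\zeta_1|\leq C(R-\rho)^{-1}$, together with a time cutoff $\chi\in C_c^\infty(I_R)$ to be specified, and plug in $\phi = u\zeta_1^2\chi$; admissibility is justified by Steklov averaging in the spirit of Remark~\ref{rem:steklov average} together with the assumed regularity on $u$. The boundary term collapses to $\tfrac12\int \zeta_1^2\chi\,\partial_t(u^2) = -\tfrac12\int \zeta_1^2\chi' u^2$. In the interior, the product rule splits $\partial_i(u\zeta_1^2)$ into a diagonal piece bounded below by $\Lambda^{-1}\zeta_1^2|D_x u|^2$ via ellipticity and a cross piece $2u\zeta_1 a_{ij}\partial_j u\,\partial_i\zeta_1$ which is absorbed by Young's inequality as $\varepsilon\zeta_1^2|D_x u|^2 + C_\varepsilon u^2|D\zeta_1|^2$. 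Setting $\varepsilon=\tfrac{1}{2\Lambda}$ produces
\begin{equation*}
-\frac{1}{2}\int_{\partial' Q_R^+} \zeta_1^2\chi' u^2 \;+\; \frac{1}{2\Lambda}\int_{Q_R^+} \zeta_1^2\chi |D_x u|^2 \;\leq\; C\int_{Q_R^+} u^2|D\zeta_1|^2\chi.
\end{equation*}

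Finally, to extract the supremum in time, for each $t_0\in I_\rho$ I would take $\chi$ to approximate $\mathbf{1}_{(-R,t_0]}\zeta_2(t)$, where $\zeta_2\in C_c^\infty(I_R)$ satisfies $\zeta_2\equiv 1$ on $I_\rho$ and $|\zeta_2'|\leq C(R-\rho)^{-1}$. In the limit the $\chi'$ term produces the pointwise-in-time contribution $\tfrac12\int_{\partial' B_R^+}\zeta_1^2 u^2(\cdot,t_0)$ plus a benign piece $\int \zeta_1^2\zeta_2\zeta_2' u^2$ controlled by $C(R-\rho)^{-1}\int_{\partial' Q_R^+} u^2$. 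Taking $\sup_{t_0\in I_\rho}$ for the boundary term, then choosing $t_0=\rho$ to capture the full interior derivative integral on $Q_\rho^+$, and summing the two estimates yields \eqref{eq:homogeneous-caccioppoli-1}. The main obstacle is structural rather than computational: the asymmetric scalings $(R-\rho)^{-2}$ versus $(R-\rho)^{-1}$ have distinct origins, the first coming from $|D\zeta_1|^2$ as in elliptic Caccioppoli, the second from $\zeta_2'$ and reflecting that the time derivative lives on the codimension-one piece $\partial' Q_R^+$; this is precisely why a product cutoff $\zeta_1^2(x)\chi(t)$ (rather than the usual single parabolic cutoff) is needed. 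The accompanying technical hurdle is rigorously justifying $u\zeta_1^2\chi$ as an admissible test function via Steklov averaging given only the a priori regularity $u\in L^2(I_R;H^1(B_R^+))\cap L^\infty(I_R;L^2(\partial' B_R^+))$.
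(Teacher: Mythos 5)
Your proof is correct and takes essentially the same route as the paper: the paper's (brief) argument also tests the combined weak formulation against $\xi^2(t)\eta^2(x)u$ with a product spatial--temporal cutoff and integrates over the slab $B_R^+\times(-R,\tau)$ for arbitrary $\tau\in I_\rho$ to capture the supremum in time, then applies ellipticity and Cauchy's inequality exactly as you do. Your derivation of the combined identity (interior equation plus boundary flux condition) and the observation that the asymmetric prefactors $(R-\rho)^{-2}$ and $(R-\rho)^{-1}$ arise from $|D_x\eta|^2$ versus $|\xi'|$ respectively are accurate fillings-in of detail the paper leaves implicit.
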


\begin{proof}
Let $\xi(t)\in C^\infty_c(I_R)$ and $\eta(x)\in C^\infty_c(B_R)$ be cut-off functions that
\begin{equation*}
\begin{aligned}
    \xi(t)=1 \mbox{ in } I_\rho , \quad \eta(x)=1  \mbox{ in } B_\rho, \\
   |D_t \xi| + | D_x \eta | \le \frac{C}{R-\rho}.
\end{aligned}
\end{equation*}
Multiplying the equation \eqref{eq:diffusion-local-constant coefficients-homogeous} with $\xi^2 \eta^2 u $ and integrating over $_\tau Q^+_R=B^+_R\times (-R,t)$ with $\tau \in I_\rho$, we get the above estimate \eqref{eq:homogeneous-caccioppoli-1} through integration by parts and Cauchy's inequality. 
\end{proof}

Next, we need to derive estimates on
$$
\int_{ \pa ' Q^+_\rho  } |\pa_t u|^2+ \int_{ \pa ' Q^+_\rho  } |D_x u|^2 .
$$
Because we lose the global estimate
$$
\int_{ \pa \om  } |D_x u|^2 \le C \int_{\pa \om} \left|\frac{\pa}{\pa_\nu} u\right|^2, \mbox{ if  } \Delta u =0  \mbox{  in  } \om,
 $$
two additional test functions are used to get additional estimates on $\int_{\pa ' Q^+_\rho  } |D_x u|^2$.
\begin{lem}\label{lem:homogeneous-caccioppoli-1.5}
If $ u \in  H^{1}\left(\pa ' Q^+_R \right) \bigcap L^{\infty}\left(I_R:H^1(B_R^+)\right) $ is a weak solution to \eqref{eq:diffusion-local-constant coefficients-homogeous}, then there holds the following Caccioppoli inequality 
\begin{equation}\label{eq:homogeneous-caccioppoli-1.5}
  \begin{split}
    \sup_{I_\rho}\int_{ B^+_\rho }  |D_x u|^2   +      \int_{\pa ' Q^+_\rho  } |D_x u|^2    
                   \leq   \frac{C(n,\Lambda)}{R-\rho} \left(    \int_{ Q^+_R  }  |D_x u|^2  +  \int_{  Q^+_R  }  | u_t|^2   \right)
  \end{split}
\end{equation}
for any $\rho \in (0,R]$.
\end{lem}

\begin{proof}
Let $\xi(t)\in C^\infty_c(I_R)$ and $\eta(x)\in C^\infty_c(B_R)$ be cut-off functions such that
\begin{equation*}
\begin{aligned}
    \xi(t)=1 \mbox{ in } I_\rho , \quad \eta(x)=1  \mbox{ in } B_\rho, \\
   |D_t \xi| + | D_x \eta | \le \frac{C}{R-\rho}.
\end{aligned}
\end{equation*}
We multiply the equation with $\xi^2 \eta^2 a_{nk} \pa_k u $ and integrate over $_\tau Q^+_R=B^+_R\times (-R,t)$ with $\tau \in I_\rho$. Using Definition \ref{defn:weak solution of diffusion-local-constant coefficients-homogeous } and divergence theorem , we see
$$
\int_{ _\tau \pa'Q^+_R }   u_t \left( \xi^2 \eta^2 a_{nk} \pa_k u \right)  + \int_{ _\tau Q^+_R  } a_{ij}  \pa_i u \pa_j \left(  \xi^2 \eta^2 a_{nk} \pa_k u     \right)  
 =0.
$$
Simple computation gives
\begin{equation*}
  \begin{split}
& \int_{ _\tau \pa' Q^+_R }  \xi^2 \eta^2   u_t \left( a_{nk} \pa_k u  \right)  + \frac{ a_{nk}}{2} \int_{ _\tau Q^+_R }\pa_k \left(   \xi^2 \eta^2  a_{ij}\pa_i u\pa_j u       \right)   \\
& =a_{nk} \int_{ _\tau Q^+_R } \xi^2 \eta (\pa_k \eta) a_{ij}\pa_i u \pa_j u  - 2a_{nk} \int_{ _\tau Q^+_R } (\pa_k u) \eta \xi^2  a_{ij}\pa_i u \pa_j\eta.
  \end{split}
\end{equation*}
Now, we use divergence theorem and the fact that $\nu=(\nu_1,\nu_2,...,\nu_n)=(0,0,...,-1)$ on $\pa' B_R^+$, showing
\begin{equation*}
  \begin{split}
& \int_{ _\tau Q^+_R }  \pa_l\left( \xi^2 \eta^2 u_t a_{lk}\pa_k u         \right)+ \frac{a_{nn}}{2}\int_{ _\tau \pa' Q^+_R }  \xi^2 \eta^2  a_{ij}\pa_i u\pa_j u          \\
& =-a_{nk} \int_{ _\tau Q^+_R } \xi^2 \eta (\pa_k \eta) a_{ij}\pa_i u \pa_j u  + 2a_{nk} \int_{ _\tau Q^+_R } (\pa_k u) \eta \xi^2  a_{ij}\pa_i u \pa_j\eta.
  \end{split}
\end{equation*}
Therefore,
\begin{equation*}
  \begin{split}
& \frac{1}{2}\int_{ _\tau Q^+_R }\frac{d}{dt}\left( \xi^2 \eta^2 a_{lk}\pa_l u \pa_k u\right) + \frac{a_{nn}}{2}\int_{ _tau \pa' Q^+_R }  \xi^2 \eta^2  a_{ij}\pa_i u\pa_j u          \\
& =-a_{nk} \int_{ _\tau Q^+_R } \xi^2 \eta (\pa_k \eta) a_{ij}\pa_i u \pa_j u  + 2a_{nk} \int_{ _\tau Q^+_R } (\pa_k u) \eta \xi^2  a_{ij}\pa_i u \pa_j\eta \\
& -2\int_{ _\tau Q^+_R }\eta \xi^2 u_t a_{lk}\pa_l u \pa_k \eta +\int_{ _\tau Q^+_R } \xi \xi_t \eta^2 a_{lk}\pa_l u \pa_k u  .
  \end{split}
\end{equation*}
Utilizing Cauchy's inequality and the fact that 
$$
\frac{1}{\Lambda} \le a_{nn} \le \Lambda,
$$  
we get
  \begin{equation*}
  \begin{split}
& \int_{ B^+_R \times {\tau} } \left( \xi^2 \eta^2  a_{ij}\pa_i u \pa_j u  \right)   +\int_{ _\tau \pa' Q^+_R }  \xi^2 \eta^2  a_{ij}\pa_i u\pa_j u  \\
& \le C\left(  \int_{ _\tau Q^+_R } \xi^2  \eta |D_x \eta|   |u_t|^2    +   \int_{ _\tau Q^+_R }  \xi^2 \eta |D_x \eta| |D_x u|^2 + \int_{ _\tau Q^+_R } \xi^2 \eta |\nabla \eta| a_{ij}\pa_i u \pa_j u + \int_{ _\tau Q^+_R } \xi \eta^2 |\xi_t|  a_{ij}\pa_i u \pa_j u \right).
  \end{split}
\end{equation*}
The lemma is proved using that $\tau \in I_\rho$ is arbitrary, and   combining above inequality with the ellipticity of $a_{ij}$ and the  estimates on the cut-off functions.

\end{proof}

\begin{lem}\label{lem:homogeneous-caccioppoli-2}
If $ u \in  H^{1}\left(\pa ' Q^+_R \right) \bigcap L^{\infty}\left(I_R:H^1(B_R^+)\right) $ is a weak solution to \eqref{eq:diffusion-local-constant coefficients-homogeous}, then there holds the following Caccioppoli inequality 
\begin{equation} \label{eq:homogeneous-caccioppoli-2}
  \begin{split}
& \sup_{I_\rho}\int_{ B^+_\rho }  |D_x u|^2   + \int_{  \pa 'Q^+_\rho  }  |u_t|^2  \le    \frac{C(n,\Lambda)}{R-\rho} \left(    \int_{ Q^+_R  }  |D_x u|^2  +  \int_{  Q^+_R  }  | u_t|^2   \right) .
 \end{split}
\end{equation}
for any $\rho \in (0,7R/8]$.
\end{lem}

\begin{proof}

We put $\xi^2 \eta^2 u_t $ into \eqref{eq:weak solution of diffusion-local-constant coefficients-homogeous} and integrate over $_tQ^+_R$. Then integration by parts gives
\begin{equation*}
  \begin{split}
& \frac{1}{2}\int_{ _tQ^+_R }\frac{d}{dt}\left( \xi^2 \eta^2 a_{ij}\pa_i u \pa_j u \right) + \int_{ _t \pa 'Q^+_R  }  \xi^2 \eta^2 |u_t|^2 \\
& =\int_{ _tQ^+_R } \xi \xi_t \eta^2 a_{ij}\pa_i u \pa_j u  -  \int_{ _tQ^+_R } 2 \eta \xi^2 u_t a_{ij}\pa_i u \pa_j\eta.
  \end{split}
\end{equation*}
For the second term on the right hand side, we have
\begin{equation*}
  \begin{split}
&  -  \int_{ _\tau Q^+_R } 2 \eta \xi^2 u_t a_{ij}\pa_i u \pa_j\eta \\
& \le C  \int_{ _\tau Q^+_R } \xi^2  \eta  |D_x \eta|  |u_t|^2    + C \int_{ _\tau Q^+_R }  \xi^2  \eta  |D_x  \eta| |D_x  u|^2   .
\end{split}
\end{equation*}
Combining above two estimates, we obtain
\begin{equation*}
  \begin{split}
& \int_{ _\tau Q^+_R }\frac{d}{dt} \left( \xi^2 \eta^2 a_{ij}\pa_i u \pa_j u  \right)+ \int_{ _\tau \pa 'Q^+_R  }  \xi^2 \eta^2 |u_t|^2 \\
& \le C\int_{ _\tau Q^+_R } \xi |\xi_t| \eta^2 |D_x  u|^2 +  C  \int_{ _\tau Q^+_R } \xi^2  \eta  |D_x \eta|  |u_t|^2    + C \int_{ _\tau Q^+_R }  \xi^2  \eta  |D_x  \eta| |D_x  u|^2     .
  \end{split}
\end{equation*}
Since $\tau \in I_\rho$ is arbitrary, we use estimates on cut-off functions to find
 \begin{equation*}
  \begin{split}
& \sup_{I_\rho}\int_{ B^+_\rho }  |D_x u|^2   + \int_{  \pa 'Q^+_\rho  }  |u_t|^2  \le   C(n,\Lambda) \left(         \frac{1}{R-\rho}  \int_{  Q^+_R  } |D_x  u|^2   +  \frac{1}{R-\rho}  \int_{  Q^+_R  } |u_t|^2 \right)   .
  \end{split}
\end{equation*}

\end{proof}

Now, we derive some regularity estimates for CIE solutions of homogeneous equation \eqref{eq:diffusion-local-constant coefficients-homogeous} from the compatibility \eqref{eq:compatibility} and the Cacciopolli inequalities in above three Lemmas.
\begin{lem}\label{lem:homogeneous-iteration}
If $ u \in  H^{1}\left(\pa ' Q^+_R \right) \bigcap L^{\infty}\left(I_R:H^1(B_R^+)\right) $ is a CIE solution to \eqref{eq:diffusion-local-constant coefficients-homogeous}, then $u\in C^{\infty}_{loc}\left(\pa ' Q^+_{\frac{3}{4}R} \right)\bigcap C^{\infty}_{loc}\left( Q^+_{\frac{3}{4}R}\right) $ and for any $\rho \in (0,R/2]$, there holds 
\begin{equation} \label{eq:homogeneous-iteration-1}
  \begin{split}
          \fint_{\pa ' Q^+_\rho  }  u^2 + \fint_{Q^+_\rho  }  u^2  & \le C(n,\Lambda) \left(\fint_{\pa ' Q^+_R  }  u^2 +\fint_{ Q^+_R  }  u^2 \right)
\end{split}
\end{equation}
and
 \begin{equation} \label{eq:homogeneous-iteration-2}
    \begin{split}
     &\fint_{\pa ' Q^+_\rho  }|u(x,t)-u(y,s)|^2dx'dt +  \fint_{ Q^+_\rho  }|u(x,t)-u(y,s)|^2dxdt      \\
      & \le C(n,\Lambda)\left(\frac{\rho}{R}\right)^2   \left( \fint_{\pa ' Q^+_R  } |u(x,t)-\lambda_1|^2dx'dt+\fint_{ Q^+_R  } |u(x,t)-\lambda_1|^2dxdt\right)
     \end{split}   
\end{equation}
and
 \begin{equation} \label{eq:homogeneous-iteration-3}
    \begin{split}
     &\fint_{\pa ' Q^+_\rho  }|u(x,t)-u(z,\tau)|^2dx'dt +  \fint_{ Q^+_\rho  }|u(x,t)-u(z,\tau)|^2dxdt      \\
      & \le C(n,\Lambda)\left(\frac{\rho}{R}\right)^2   \left( \fint_{\pa ' Q^+_R  } |u(x,t)-\lambda_2|^2dx'dt+\fint_{ Q^+_R  } |u(x,t)-\lambda_2|^2dxdt\right)
     \end{split}   
\end{equation}
where $(y,s) \in \pa ' Q^+_\rho, (z,\tau) \in  Q^+_\rho$ and  $\lambda_1, \lambda_2$ are two arbitrary real numbers.
\end{lem}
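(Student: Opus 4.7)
The plan is to exploit the constant-coefficient structure of \eqref{eq:diffusion-local-constant coefficients-homogeous}, under which tangential and temporal differentiation preserve the equation. I would proceed in three stages: (i) establish smoothness of $u$ in $Q^+_{3R/4}\cup\pa' Q^+_{3R/4}$ by a difference-quotient bootstrap; (ii) derive sup bounds for $u$, $D_x u$ and $\pa_t u$ via DeGiorgi--Nash--Moser iteration combined with the Caccioppoli inequalities applied to $u-\lambda$; and (iii) apply the mean value theorem to convert these sup bounds into oscillation decay. For stage (i), for small $h$ and $i\in\{1,\dots,n-1\}$ the tangential difference quotient $\Delta^h_i u=h^{-1}(u(\cdot+he_i,\cdot)-u)$, and likewise the time difference quotient $\Delta^h_t u$, are again weak solutions of \eqref{eq:diffusion-local-constant coefficients-homogeous}; applying Lemmas \ref{lem:homogeneous-caccioppoli-1}, \ref{lem:homogeneous-caccioppoli-1.5}, \ref{lem:homogeneous-caccioppoli-2} to these differences and passing $h\to 0$ shows that $\pa_{x_1}u,\dots,\pa_{x_{n-1}}u$ and $\pa_t u$ are weak solutions. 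The normal derivative $\pa_n u$ is then recovered on $\pa' B^+_R$ from the boundary identity $a_{nn}\pa_n u=\pa_t u-\sum_{j<n}a_{nj}\pa_j u$ (using $a_{nn}\ge 1/\Lambda$) and propagated into $Q^+_R$ through the interior divergence relation $\pa_n(a_{nj}\pa_j u)=-\sum_{i<n}\pa_i(a_{ij}\pa_j u)$ by integration in $x_n$; iterating gives $C^\infty$.

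For stage (ii), the very-weak-solution DeGiorgi--Nash--Moser iteration referenced after Definition \ref{defn:veryweak solution of localized problem}, combined with a standard interior sup-by-$L^2$ estimate for the elliptic equation $\pa_i(a_{ij}\pa_j u)=0$ at each fixed time, yields
\[
\sup_{\pa' Q^+_{R/2}}u^2+\sup_{Q^+_{R/2}}u^2\le C(n,\Lambda)\Bigl(\fint_{\pa' Q^+_R}u^2+\fint_{Q^+_R}u^2\Bigr),
\]
which immediately implies \eqref{eq:homogeneous-iteration-1} since $\pa' Q^+_\rho\subset\pa' Q^+_{R/2}$ and $Q^+_\rho\subset Q^+_{R/2}$ whenever $\rho\le R/2$. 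Since $u-\lambda$ is again a weak solution for any constant $\lambda$, and since $D_x u$ and $\pa_t u$ are weak solutions by stage (i), applying the same iteration to $D_x u$ and $\pa_t u$ and then using Lemmas \ref{lem:homogeneous-caccioppoli-1}--\ref{lem:homogeneous-caccioppoli-2} for $u-\lambda$---the compatibility \eqref{eq:compatibility} is invoked here to control the $\int_{Q^+_R}|u_t|^2$ term that appears on the right-hand side of Lemma \ref{lem:homogeneous-caccioppoli-2}---produces
\[
\sup_{Q^+_{R/4}}\bigl(|D_x u|^2+|\pa_t u|^2\bigr)\le\frac{C(n,\Lambda)}{R^2}\Bigl(\fint_{\pa' Q^+_{R/2}}|u-\lambda|^2+\fint_{Q^+_{R/2}}|u-\lambda|^2\Bigr),
\]
the $R^{-2}$ arising from the extra power of $(R-\rho)^{-1}$ in the Caccioppoli estimates applied to $u-\lambda$.

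For stage (iii), smoothness makes pointwise evaluation meaningful. Any $(y,s)\in\pa' Q^+_\rho$ and $(x,t)\in Q^+_\rho$---and similarly any $(z,\tau)\in Q^+_\rho$ and $(x,t)\in Q^+_\rho$---are joined by a straight segment contained in $Q^+_\rho$ (since $y_n=0\le x_n$), and the mean value theorem along this segment yields
\[
|u(x,t)-u(y,s)|^2\le C\rho^2\sup_{Q^+_\rho}\bigl(|D_x u|^2+|\pa_t u|^2\bigr).
\]
Averaging over $(x,t)$ on $\pa' Q^+_\rho$ and over $(x,t)$ on $Q^+_\rho$, and substituting the sup bound of stage (ii) with $\lambda=\lambda_1$, delivers \eqref{eq:homogeneous-iteration-2}. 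The proof of \eqref{eq:homogeneous-iteration-3} is identical with $(z,\tau)$ replacing $(y,s)$.

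The main obstacle is stage (i), specifically the control of the normal derivative $\pa_n u$. Tangential and temporal difference quotients are harmless because they preserve both the interior equation and the boundary relation, but the latter is a single first-order equation among $n$ spatial derivatives and therefore offers no direct handle on $\pa_n u$. The remedy is the algebraic/elliptic combination of the boundary and interior equations described above; once smoothness is secured, the remaining merging of Moser, Caccioppoli, and the mean value theorem is routine.
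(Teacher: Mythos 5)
Your proposal is sound, and it reaches the same conclusion as the paper by a partially different route. Both proofs open identically: tangential and temporal difference quotients preserve the constant-coefficient equation \eqref{eq:diffusion-local-constant coefficients-homogeous}, and feeding them into Lemmas~\ref{lem:homogeneous-caccioppoli-1}--\ref{lem:homogeneous-caccioppoli-2} (together with \eqref{eq:compatibility}) and bootstrapping gives $C^\infty$ regularity. The divergence begins at the point where $L^2$ energy control must be upgraded to pointwise control. The paper iterates the Caccioppoli chain to obtain $H^k$ bounds for large $k$, then invokes Sobolev embedding once to land directly on a $C^1$ bound, after which \eqref{eq:homogeneous-iteration-1}--\eqref{eq:homogeneous-iteration-3} follow from elementary oscillation bounds applied to $u-\lambda$. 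You instead run a DeGiorgi--Nash--Moser iteration on $u$, on the tangential derivatives $\pa_1 u,\dots,\pa_{n-1}u$, and on $\pa_t u$ (each again a weak solution), producing sup bounds, and then close by the mean value theorem along segments (which are indeed contained in the convex set $Q^+_\rho$). What the paper's route buys is uniformity across all derivatives: once $u\in H^k_{\rm loc}$ for every $k$, the Sobolev argument bounds $\pa_n u$ on the same footing as the other derivatives. What your route buys is that Moser iteration gives sup bounds without having to go to arbitrarily high $k$; but it comes at a cost, because $\pa_n u$ is \emph{not} a weak solution of \eqref{eq:diffusion-local-constant coefficients-homogeous} (the boundary equation involves only $a_{nj}\pa_j u$ and $\pa_t u$), so the iteration cannot be applied to it directly. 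You noticed this difficulty and addressed it in stage~(i) --- solving for $\pa_n u$ on $\pa' B^+_R$ via $a_{nn}\pa_n u=\pa_t u-\sum_{j<n}a_{nj}\pa_j u$, and using the interior relation to propagate --- but in stage~(ii) you wrote \textquotedblleft applying the same iteration to $D_x u$\textquotedblright\ as if all $n$ spatial derivatives were weak solutions. The cleanest repair is to run Moser on the combination $w:=a_{nk}\pa_k u$, which \emph{does} satisfy both the interior equation $\pa_i(a_{ij}\pa_j w)=0$ and the boundary relation $\pa_t w-a_{nj}\pa_j w=0$ (by commuting the constant-coefficient operator $a_{nk}\pa_k$ through), and then to recover $\sup|\pa_n u|$ from $\sup|w|$ and the tangential sup bounds using $a_{nn}\ge 1/\Lambda$. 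Two smaller points: your sup bound is stated on $Q^+_{R/4}$ while the lemma is claimed for all $\rho\in(0,R/2]$, so a further covering or a radius $>R/2$ in the Moser step is needed; and the $(\rho/R)^2$ decay factor is really a scaling statement, most transparently obtained (as the paper does) by first reducing to $R=1$ before running the argument.
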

\begin{proof}
By the scaling 
$$
v(x,t)=u(\frac{x}{R},\frac{t}{R}),
$$
it is sufficient to prove Lemma \ref{lem:homogeneous-iteration} for $R=1$. Combining Lemama \ref{lem:homogeneous-caccioppoli-1}, Lemma \ref{lem:homogeneous-caccioppoli-1.5}, Lemma \ref{lem:homogeneous-caccioppoli-2} and \eqref{eq:compatibility}, 
$$
\int_{\pa ' Q^+_\rho  } |D_{x} u|^2+\int_{\pa ' Q^+_\rho  } |\pa_t u|^2 \le C\left( \frac{1}{(1-\rho)^2} \int_{\pa ' Q^+_1  } u^2 +\frac{1}{(1-\rho)^3} \int_{  Q^+_1  } u^2 \right) ,\quad \rho\in (0,3/4).
$$
Consequently, one produces
$$
\| u \|^2_{H^{1}(\pa ' Q^+_\rho)}+\| u \|^2_{H^{1}( Q^+_\rho)}\le  C\left( \frac{1}{(1-\rho)^2} \int_{\pa ' Q^+_1  } u^2 +\frac{1}{(1-\rho)^3} \int_{ Q^+_1  } u^2 \right) ,\quad \rho\in (0,3/4).
$$
We use difference quotients repeatedly to above inequality, obtaining $u\in H^{m}_{loc} \left(\pa ' Q^+_{\frac{3}{4}R}  \right)$ and $ u\in H^{m}_{loc} \left(Q^+_{\frac{3}{4}R}\right) $ for any integer $m$. This proves the first assertion. Repeatedly utilizing above procedures,  
$$
\| u \|^2_{H^{k}(\pa ' Q^+_{1/2})}+\| u \|^2_{H^{k}( Q^+_{1/2})}\le  C(k) \left(\int_{\pa ' Q^+_1  } u^2 +\int_{ Q^+_1  } u^2\right)
$$
for some large $k$. Then, by Sobolev embedding theorem, we know
$$
\| u \|^2_{C^{1}(\pa ' Q^+_{1/2}) }+\| u \|^2_{C^{1}( Q^+_{1/2}) }\le  C \left(\int_{\pa ' Q^+_1  } u^2 +\int_{ Q^+_1  } u^2\right). 
$$ 
Thus
\begin{equation*}
\begin{split}
\fint_{\pa ' Q^+_\rho  }  u^2  \le C\sup_{\pa ' Q^+_{1/2} } u^2 &\le C \left( \int_{\pa ' Q^+_1  } u^2+\int_{ Q^+_1  } u^2\right)  \\
&\le C \left( \fint_{\pa ' Q^+_1  } u^2+\fint_{ Q^+_1  } u^2\right), \quad \rho \in (0,1/2]
\end{split}
\end{equation*}
and
\begin{equation*}
\begin{split}
\fint_{ Q^+_\rho  }  u^2  \le C\sup_{ Q^+_{1/2} } u^2 &\le C \left( \int_{\pa ' Q^+_1  } u^2+\int_{ Q^+_1  } u^2\right)  \\
&\le C \left( \fint_{\pa ' Q^+_1  } u^2+\fint_{ Q^+_1  } u^2\right), \quad \rho \in (0,1/2].
\end{split}
\end{equation*}
\eqref{eq:homogeneous-iteration-1} is proved. Using the continuity,
\begin{equation*} 
  \begin{split}
&  \fint_{\pa ' Q^+_\rho  }|u(x,t)-u(y,s)|^2dx'dt  \le  ( \  \mathop{osc}\limits_{\pa ' Q^+_\rho}u  \ )^2 \\
& \le C\rho^2 \| u \|^2_{C^{1}(\pa ' Q^+_{1/2})} \\
& \le C\rho^2 \left(  \int_{\pa ' Q^+_1  } u^2+\int_{ Q^+_1  } u^2 \right), \quad \rho \in (0,1/2]  .
 \end{split}
\end{equation*}
Similarly,
\begin{equation*} 
  \begin{split}
&  \fint_{ Q^+_\rho  }|u(x,t)-u(y,s)|^2dxdt  \le  ( \  \mathop{osc}\limits_{ Q^+_\rho}u  \ )^2 \\
& \le C\rho^2 \| u \|^2_{C^{1}( Q^+_{1/2})} \\
& \le C\rho^2 \left(  \int_{\pa ' Q^+_1  } u^2+\int_{ Q^+_1  } u^2 \right), \quad \rho \in (0,1/2]  .
 \end{split}
\end{equation*}
\eqref{eq:homogeneous-iteration-2} is proved if we put $u-\lambda_1$ into above two estimates. The proof of \eqref{eq:homogeneous-iteration-3} is similar.  
\end{proof}
From Lemma \ref{lem:homogeneous-iteration}, one directly concludes the following result.
\begin{cor}\label{cor:homogeneous-iteration}
If $ u \in  H^{1}\left(\pa ' Q^+_R \right) \bigcap L^{\infty}\left(I_R:H^1(B_R^+)\right) $ is a CIE solution to \eqref{eq:diffusion-local-constant coefficients-homogeous}, then $u\in C^{\infty}_{loc}\left(\pa ' Q^+_{\frac{3}{4}R} \right)\bigcap C^{\infty}_{loc}\left( Q^+_{\frac{3}{4}R}\right) $ and for any $\rho \in (0,R/2]$, there holds 
\begin{equation} \label{eq:homogeneous-iteration-average-1}
   \begin{split}
          \fint_{\pa ' Q^+_\rho  }  u^2 + \fint_{Q^+_\rho  }  u^2  & \le C(n,\Lambda) \left(\fint_{\pa ' Q^+_R  }  u^2 +\fint_{ Q^+_R  }  u^2 \right)
\end{split}
\end{equation}
and
\begin{equation} \label{eq:homogeneous-iteration-average-2a}
  \begin{split}
     &\fint_{\pa ' Q^+_\rho  }|u(x,t)-u_\rho|^2dx'dt+\fint_{ Q^+_\rho  }|u(x,t)-u_\rho|^2dxdt \\
     &+\fint_{\pa ' Q^+_\rho  }|u(x,t)-\tilde{u_\rho}|^2dx'dt+\fint_{ Q^+_\rho  }|u(x,t)-\tilde{u_\rho}|^2dxdt\\
      & \le C(n,\Lambda)\left(\frac{\rho}{R}\right)^2   \Bigg( \fint_{\pa ' Q^+_R  } |u(x,t)-u_R|^2dx'dt+\fint_{ Q^+_R  } |u(x,t)-u_R|^2dxdt\\
     &+\fint_{\pa ' Q^+_R  } |u(x,t)-\tilde{u_R}|^2dx'dt+\fint_{ Q^+_R  } |u(x,t)-\tilde{u_R}|^2dxdt \Bigg)
     \end{split} 
\end{equation}
where 
$$
u_R=\fint_{\pa ' Q^+_R  }   u   \quad \mbox{and}\quad \tilde{u_R}=\fint_{ Q^+_R  } u.
$$
\end{cor}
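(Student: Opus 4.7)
The plan is to deduce Corollary~\ref{cor:homogeneous-iteration} from Lemma~\ref{lem:homogeneous-iteration} by an elementary rewriting of its oscillation estimates in terms of the boundary average $u_\rho$ and the interior average $\tilde u_\rho$. The smoothness statement and inequality \eqref{eq:homogeneous-iteration-average-1} are exactly the first assertion of Lemma~\ref{lem:homogeneous-iteration} and \eqref{eq:homogeneous-iteration-1}, so nothing new is needed for them; the real content of the corollary is \eqref{eq:homogeneous-iteration-average-2a}.

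For \eqref{eq:homogeneous-iteration-average-2a}, I would first note that by the smoothness part of Lemma~\ref{lem:homogeneous-iteration} and the assumption $\rho\le R/2 < 3R/4$, the pointwise values $u(y,s)$ and $u(z,\tau)$ are well defined for any chosen $(y,s)\in \pa' Q^+_\rho$ and $(z,\tau)\in Q^+_\rho$. Then I would split the four terms on the left hand side of \eqref{eq:homogeneous-iteration-average-2a} into ``same-set'' and ``cross'' pieces. For the same-set pieces, the minimizing property of the arithmetic mean gives immediately
\begin{equation*}
\fint_{\pa' Q^+_\rho}|u-u_\rho|^2 \le \fint_{\pa' Q^+_\rho}|u-u(y,s)|^2 \quad \mbox{and}\quad \fint_{Q^+_\rho}|u-\tilde u_\rho|^2 \le \fint_{Q^+_\rho}|u-u(z,\tau)|^2.
\end{equation*}
For the cross pieces (where the averaging set and the integration set differ), I would combine the triangle inequality with Jensen's inequality, routing through the chosen reference point to obtain
\begin{equation*}
\fint_{Q^+_\rho}|u-u_\rho|^2 \le 2\fint_{Q^+_\rho}|u-u(y,s)|^2 + 2\fint_{\pa' Q^+_\rho}|u-u(y,s)|^2,
\end{equation*}
and the analogous inequality for $\fint_{\pa' Q^+_\rho}|u-\tilde u_\rho|^2$ in terms of $u(z,\tau)$.

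After these reductions, each of the four terms on the left is controlled by the paired quantity $\fint_{\pa' Q^+_\rho}|u-u(y,s)|^2 + \fint_{Q^+_\rho}|u-u(y,s)|^2$, or the analogue with $(z,\tau)$ in place of $(y,s)$, which is exactly what \eqref{eq:homogeneous-iteration-2} and \eqref{eq:homogeneous-iteration-3} bound by $C(n,\Lambda)(\rho/R)^2$ times a combined $L^2$ oscillation on $\pa' Q^+_R \cup Q^+_R$, with the free constants $\lambda_1$, $\lambda_2$ at our disposal. Applying \eqref{eq:homogeneous-iteration-2} with $\lambda_1 = u_R$ to the first pair and \eqref{eq:homogeneous-iteration-3} with $\lambda_2 = \tilde u_R$ to the second pair, then summing and absorbing constants into $C(n,\Lambda)$, produces \eqref{eq:homogeneous-iteration-average-2a}. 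I do not expect any real obstacle here; the only step requiring minimal care is the cross bound, where the averaging set and the integration set differ and one must invoke Jensen to trade a pointwise deviation for an $L^2$ deviation over the correct set so that the resulting expression matches the form of the right hand sides of \eqref{eq:homogeneous-iteration-2} and \eqref{eq:homogeneous-iteration-3}.
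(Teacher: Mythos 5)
Your proof is correct and is exactly the argument the paper treats as implicit — the paper states Corollary~\ref{cor:homogeneous-iteration} as a ``natural consequence'' of Lemma~\ref{lem:homogeneous-iteration} with no written proof, and your derivation (mean-minimization for the same-set terms, triangle inequality plus Jensen for the cross terms, then \eqref{eq:homogeneous-iteration-2} with $\lambda_1=u_R$ and \eqref{eq:homogeneous-iteration-3} with $\lambda_2=\tilde u_R$) is precisely the intended filling-in of details.
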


Now, we look into the non-homogeneous case \eqref{eq:diffusion-local-constant coefficients-Non homogeous}. The semi norms corresponding to equation \eqref{eq:diffusion-local-constant coefficients-Non homogeous} are

$$
[u]^2_{H^1(\pa ' Q^+_R )}=  \| D_{x} u \|^2_{L^2(\pa ' Q^+_R)} + \| \pa_{t} u \|^2_{L^2(\pa ' Q^+_R)}= \sum_{j=1}^{n}\| \pa_{j} u \|^2_{L^2(\pa ' Q^+_R)} + \| \pa_{t} u \|^2_{L^2(\pa ' Q^+_R)},
$$
$$
[u]^2_{H^1( Q^+_R )}=  \| D_{x} u \|^2_{L^2( Q^+_R)} + \| \pa_{t} u \|^2_{L^2( Q^+_R)}= \sum_{j=1}^{n}\| \pa_{j} u \|^2_{L^2( Q^+_R)} + \| \pa_{t} u \|^2_{L^2( Q^+_R)},
$$
$$
(f)_R=(f)_{\pa ' Q^+_R} = \fint_{\pa ' Q^+_R  }f .
$$
And we define the polynomials
\begin{subequations}\label{eq:polynomial on the boundary}
\begin{align}
&P(u,R)=  \sum_{j=1}^{n} (\pa_j u)_{R} x_j + (\pa_t u)_{R} t ,\\
&\tilde{P}(u,R)=  \sum_{j=1}^{n} \tilde{(\pa_j u)_{R}} x_j + \tilde{(\pa_t u)_{R}} t,
\end{align}
\end{subequations}
for which the polynomial $P(u,R)$ satisfies
\begin{equation*}
 \left\{
\begin{aligned}
  \pa_i(a_{ij}\pa_j P)  &=   0 \quad  \mbox{in }    Q^+_R,     \\
   \partial_{t}P-  a_{nj}\pa_jP  &=  (f)_R \quad  \mbox{on }  \pa ' Q^+_R ,
\end{aligned}
\right.
\end{equation*}
if $u$ is a solution to \eqref{eq:diffusion-local-constant coefficients-Non homogeous}. By comparing the polynomials \eqref{eq:polynomial on the boundary} with the solution to \eqref{eq:diffusion-local-constant coefficients-Non homogeous}, we have the following result.
\begin{lem}\label{lem:non-homogeneous-iteration}
Suppose $ u \in  H^{1}\left(\pa ' Q^+_R \right) \bigcap L^{\infty}\left(I_R:H^1(B_R^+)\right) $ is a CIE solution to \eqref{eq:diffusion-local-constant coefficients-Non homogeous}, then for any $\rho \in (0,R]$ there holds 
\begin{equation} \label{eq:non-homogeneous-iteration-1}
  \begin{split}
                  & [u]^2_{H^1(\pa ' Q^+_\rho )}+ \frac{1}{\rho} [u]^2_{H^1( Q^+_\rho )}\\
     & \le C  \left(\frac{\rho}{R}\right)^{n} \left(  [u]^2_{H^1(\pa ' Q^+_R )} + \frac{1}{R}[u]^2_{H^1(Q^+_R )}\right)+  \frac{C}{R}\sum_{i=1}^n \| F_i \|^2_{L^2( Q^+_R)}\\
   &+C \| F_n\|^2_{L^2( \pa' Q^+_R)} + C\| f \|^2_{L^2(\pa ' Q^+_R)}  ,
 \end{split}
\end{equation}
\begin{equation} \label{eq:non-homogeneous-iteration-2}
  \begin{split}
         &[u-P(u,\rho)]^2_{H^1(\pa ' Q^+_\rho )}+\frac{1}{\rho}[u-P(u,\rho)]^2_{H^1(Q^+_\rho )}\\
         +&[u-\tilde{P}(u,\rho)]^2_{H^1(\pa ' Q^+_\rho )}+\frac{1}{\rho}[u-\tilde{P}(u,\rho)]^2_{H^1(Q^+_\rho )}\\
& \le  C\left(\frac{\rho}{R}\right)^{n+2} \Bigg( [u-P(u,R)]^2_{H^1(\pa ' Q^+_R )}+ \frac{1}{R} [u-P(u,R)]^2_{H^1(Q^+_R )}\\
&+[u-\tilde{P}(u,R)]^2_{H^1(\pa ' Q^+_R )}+ \frac{1}{R} [u-\tilde{P}(u,R)]^2_{H^1(Q^+_R )}\Bigg)\\
  &+  \frac{C}{R} \sum_{i=1}^n \| F_i \|^2_{L^2( Q^+_R)} +C\| F_n \|^2_{L^2(\pa ' Q^+_R)} + C\| f-(f)_R \|^2_{L^2(\pa ' Q^+_R)}.
 \end{split}
\end{equation}

\end{lem}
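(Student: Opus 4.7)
The plan is to prove both estimates via a Campanato-type decomposition $u = h + w$, where $h \in H^{1}(\partial' Q^+_R) \cap L^{\infty}(I_R; H^1(B^+_R))$ is a weak solution to the homogeneous equation \eqref{eq:diffusion-local-constant coefficients-homogeous} on $Q^+_R$ chosen so that the difference $w = u - h$ vanishes on $\partial Q^+_R \setminus \partial' Q^+_R$ and satisfies the non-homogeneous equation \eqref{eq:diffusion-local-constant coefficients-Non homogeous} with the same data $F_i, f, F_n$. The decay properties of $h$ then come from the homogeneous theory developed above, while the error $w$ is controlled by the source norms via direct energy estimates.

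For \eqref{eq:non-homogeneous-iteration-1}: since the coefficients $a_{ij}$ are constant, every partial derivative $\partial_\ell h$ (for $\ell \in \{1,\dots,n,t\}$) is again a weak solution to \eqref{eq:diffusion-local-constant coefficients-homogeous}. Applying \eqref{eq:homogeneous-iteration-average-1} to each $\partial_\ell h$ and converting the average bounds to integral bounds using the volume scalings $|\partial' Q^+_\rho| \sim \rho^n$ and $|Q^+_\rho| \sim \rho^{n+1}$ yields
\[
 [h]^2_{H^1(\partial' Q^+_\rho)} + \tfrac{1}{\rho}[h]^2_{H^1(Q^+_\rho)} \le C\left(\tfrac{\rho}{R}\right)^{n} \Big([h]^2_{H^1(\partial' Q^+_R)} + \tfrac{1}{R}[h]^2_{H^1(Q^+_R)}\Big).
\]
For $w$, testing the weak formulation \eqref{eq:veryweak solution of diffusion-local-constant coefficients-Non homogeous} with (a Steklov average of) $w$ itself, exploiting the vanishing of $w$ on the complementary boundary and using ellipticity of $a_{ij}$ together with Cauchy's inequality, produces
\[
 [w]^2_{H^1(\partial' Q^+_R)} + \tfrac{1}{R}[w]^2_{H^1(Q^+_R)} \le \tfrac{C}{R}\sum_{i=1}^{n} \|F_i\|^2_{L^2(Q^+_R)} + C\|F_n\|^2_{L^2(\partial' Q^+_R)} + C\|f\|^2_{L^2(\partial' Q^+_R)}.
\]
Adding the two estimates, replacing $[h]$ by $[u] + [w]$ via the triangle inequality, and absorbing the $w$-contributions into the source terms on the right delivers \eqref{eq:non-homogeneous-iteration-1}.

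For \eqref{eq:non-homogeneous-iteration-2}: the affine polynomials $P(u, R)$ and $\tilde{P}(u, R)$ are themselves exact solutions of \eqref{eq:diffusion-local-constant coefficients-Non homogeous} with $f$ replaced by $(f)_R$ and $F_i \equiv 0$, so $u - P(u, R)$ and $u - \tilde{P}(u, R)$ satisfy the same equation with source $f - (f)_R$. Repeating the $h + w$ decomposition for these differences, the decay for the homogeneous part is now extracted from the oscillation estimate \eqref{eq:homogeneous-iteration-average-2a} applied to $\partial_\ell h$: this contributes an additional factor $(\rho/R)^2$ from the oscillation decay of derivatives of a homogeneous solution, stacked on top of the $(\rho/R)^n$ volume-scaling factor, giving the claimed $(\rho/R)^{n+2}$ rate. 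The polynomials $P(u, \rho)$ and $\tilde{P}(u, \rho)$ on the left are chosen precisely so that their coefficients coincide with the averaged derivatives of $u$ on $\partial' Q^+_\rho$ and $Q^+_\rho$ respectively, which is exactly the normalisation that makes \eqref{eq:homogeneous-iteration-average-2a} directly applicable after passing to $\partial_\ell h = \partial_\ell u - \partial_\ell w$ and absorbing $w$-contributions into the source.

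The main technical obstacle is the $\tfrac{1}{\rho}$ versus $\tfrac{1}{R}$ weighting on the interior semi-norms, which arises from the volume ratio $|Q^+_\rho|/|\partial' Q^+_\rho| \sim \rho$; any mismatch here would destroy the iteration because one cannot afford a $\tfrac{1}{\rho}$ factor on the source side. Keeping track of all volume ratios in the conversion between averaged and raw integrals, and simultaneously ensuring the energy estimate for $w$ produces the correct $\tfrac{1}{R}\|F_i\|^2$ weight (rather than $\|F_i\|^2$ or $\tfrac{1}{\rho}\|F_i\|^2$), requires a careful choice of cutoff in the test function and a sharp Cauchy--Schwarz splitting so that every term scales consistently with the ambient $R$.
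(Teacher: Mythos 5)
Your strategy---decompose $u$ into a homogeneous part that inherits the decay of Corollary~\ref{cor:homogeneous-iteration} (applied to $\partial_\ell$ of the homogeneous piece, since the constant coefficients commute with differentiation) and an error term controlled by the source norms, then combine via the triangle inequality and finally run the argument on $u - P(u,R)$ to replace $f$ by $f - (f)_R$---is exactly the route the paper takes. The paper splits the error into two pieces $w_1, w_2$ (one picking up $\partial_i F_i$, one picking up $f$, each an initial-value problem with $w_i|_{t=-R}=0$), whereas you use a single error term with Dirichlet data on the complementary boundary, but this is merely bookkeeping and both versions close in the same way. Likewise, the paper realizes the passage from $\|f\|^2$ to $\|f-(f)_R\|^2$ through the translation identity $[U-P(U,\rho)]_{H^1} = [u-P(u,\rho)]_{H^1}$ for $U = u - P(u,R)$, which is the same observation you make implicitly by noting that the affine $P(u,R)$ solves the equation with $f$ replaced by $(f)_R$.

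There is, however, a genuine gap in your treatment of the error term $w$. You claim that testing the very weak formulation \eqref{eq:veryweak solution of diffusion-local-constant coefficients-Non homogeous} with (a Steklov average of) $w$ itself yields
$[w]^2_{H^1(\partial' Q^+_R)} + \tfrac{1}{R}[w]^2_{H^1(Q^+_R)} \lesssim \tfrac{1}{R}\sum_i\|F_i\|^2_{L^2(Q^+_R)} + \|F_n\|^2_{L^2(\partial' Q^+_R)} + \|f\|^2_{L^2(\partial' Q^+_R)}$.
This is not what the test function $w$ gives. Pairing with $w$ produces a Caccioppoli-type bound on $\sup_t \|w\|^2_{L^2(\partial' B^+_R)}$ and $\|D_x w\|^2_{L^2(Q^+_R)}$; it does \emph{not} produce $\|\partial_t w\|^2_{L^2(\partial' Q^+_R)}$, $\|D_x w\|^2_{L^2(\partial' Q^+_R)}$, or $\|\partial_t w\|^2_{L^2(Q^+_R)}$, all of which are required for the $H^1$ seminorms on the left. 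This is precisely the ``loss of ellipticity on $\partial' Q^+_R$'' the paper emphasizes: the boundary $H^1$ control needs the two additional test functions $\xi^2\eta^2\, a_{nk}\partial_k w$ and $\xi^2\eta^2\, \partial_t w$ (as in Lemmas~\ref{lem:homogeneous-caccioppoli-1.5} and~\ref{lem:homogeneous-caccioppoli-2}) together with the compatible interior-energy condition~\eqref{eq:compatibility}, which is exactly what puts $w$ in $H^1(\partial' Q^+_R) \cap L^\infty(I_R; H^1(B^+_R))$ in the first place and is why the paper explicitly invokes Condition~\ref{con:compatibility} when constructing $w_1, w_2$. Without invoking~\eqref{eq:compatibility} and these additional test functions, the error estimate you claim does not follow from the single pairing with $w$, and the iteration does not close.
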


\begin{proof}
It suffices to merely prove Lemma \ref{lem:non-homogeneous-iteration} for $\rho \in (0,R/2]$, otherwise one can take $C\geq 2^{n+2}$.  In order to study the solution of \eqref{eq:diffusion-local-constant coefficients-Non homogeous}, it is convenient to investigate the following two initial value problems
\begin{equation}\label{eq:diffusion-local-constant coefficients-Non homogeous-initial-1}
 \left\{
\begin{aligned}
  \pa_i(a_{ij}\pa_j w_1)  &=   \pa_iF_i \quad  \mbox{in }    Q^+_R,     \\
   \partial_{t}w_1-  a_{nj}\pa_jw_1  &=  0 \quad   \quad \mbox{on }  \pa ' Q^+_R ,\\
   w_1\big|_{t=-R}&=0
\end{aligned}
\right.
\end{equation}
and
\begin{equation}\label{eq:diffusion-local-constant coefficients-Non homogeous-initial-2}
 \left\{
\begin{aligned}
  \pa_i(a_{ij}\pa_j w_2)  &=   0  \quad  \mbox{in }    Q^+_R,     \\
   \partial_{t}w_2-  a_{nj}\pa_jw_2  &=  f \quad  \mbox{on }  \pa ' Q^+_R ,\\
   w_2\big|_{t=-R}&=0.
\end{aligned}
\right.
\end{equation}
From the theory of linear parabolic equations and \eqref{eq:compatibility}, it is clear  that there exist CIE solutions  
$$
w_1, w_2 \in  H^{1}\left(\pa ' Q^+_R \right) \bigcap L^{\infty}\left(I_R:H^1(B_R^+)\right)
$$
of \eqref{eq:diffusion-local-constant coefficients-Non homogeous-initial-1} and \eqref{eq:diffusion-local-constant coefficients-Non homogeous-initial-2} respectively, with interior estimates
\be\label{eq:w1estimate}
[w_1]^2_{H^1(\pa ' Q^+_{3R/4} )}+ \frac{1}{R}[w_1]^2_{H^1( Q^+_{3R/4} )}\le  C \left( \frac{1}{R}\sum_{i=1}^n \| F_i \|^2_{L^2( Q^+_R)} + \| F_n \|^2_{L^2( \pa ' Q^+_R)}  \right) ,
\ee
\be\label{eq:w2estimate}
 [w_2]^2_{H^1(\pa ' Q^+_{3R/4} )} +\frac{1}{R}[w_2]^2_{H^1( Q^+_{3R/4} )} \le  C  \| f \|^2_{L^2(\pa ' Q^+_R)}.
\ee
Setting
$$
v:=u-w_1-w_2,
$$
and noting that if $u \in H^{1}\left(\pa ' Q^+_R \right) \bigcap L^{\infty}\left(I_R:H^1(B_R^+)\right)$ is a weak solution to \eqref{eq:diffusion-local-constant coefficients-Non homogeous}, then $v \in  H^{1}\left(\pa ' Q^+_R \right) \bigcap L^{\infty}\left(I_R:H^1(B_R^+)\right) $, and it is a CIE solution to the homogeneous equation \eqref{eq:diffusion-local-constant coefficients-homogeous}. By Lemma \ref{lem:homogeneous-iteration}, $v\in  C^{\infty}_{loc}\left(\pa ' Q^+_{\frac{3}{4}R} \right)\bigcap C^{\infty}_{loc}\left( Q^+_{\frac{3}{4}R}\right) $.
Thus applying  Corollary \ref{cor:homogeneous-iteration} to $\nabla v$ and $\pa_t v$, we obtain
\begin{equation} \label{eq:homogeneous iteration derivative 1}
  \begin{split}
        &  [v]^2_{H^1(\pa ' Q^+_\rho )}+ \frac{1}{\rho}[v]^2_{H^1( Q^+_\rho )}    \le C  \left(\frac{\rho}{R}\right)^{n} \left(  [v]^2_{H^1(\pa ' Q^+_R )} + \frac{1}{R}[v]^2_{H^1(Q^+_R )}\right),
     \end{split}
     \end{equation}
\begin{equation} \label{eq:homogeneous iteration derivative 2}
  \begin{split}
& [v-P(v,\rho)]^2_{H^1(\pa ' Q^+_\rho )}+ \frac{1}{\rho} [v-P(v,\rho)]^2_{H^1( Q^+_\rho )} \\
&+ [v-\tilde{P}(v,\rho)]^2_{H^1(\pa ' Q^+_\rho )}+ \frac{1}{\rho} [v-\tilde{P}(v,\rho)]^2_{H^1( Q^+_\rho )}\\
&  \le C  \left(\frac{\rho}{R}\right)^{n+2} \Bigg( [v-P(v,R)]^2_{H^1(\pa ' Q^+_R )}+ \frac{1}{R} [v-P(v,R)]^2_{H^1(Q^+_R )}\\
  & +   [v-\tilde{P}(v,R)]^2_{H^1(\pa ' Q^+_R )}+ \frac{1}{R} [v-\tilde{P}(v,R)]^2_{H^1(Q^+_R )}     \Bigg),
 \end{split}
     \end{equation}
for any $\rho \in (0,R/2]$ and $C$ only depends on $n,\Lambda$.  

Let
$$
u=v+w_1+w_2,
$$ 
then it follows from \eqref{eq:w1estimate}-\eqref{eq:homogeneous iteration derivative 1} that 
\begin{equation}\label{eq:nonhomogeneous iteration computation1}
  \begin{split}
          & [u]^2_{H^1(\pa ' Q^+_\rho )}+  \frac{1}{\rho}[u]^2_{H^1( Q^+_\rho )}\\
         &  \le 4\Big( [v]^2_{H^1(\pa ' Q^+_\rho )} +\frac{1}{\rho}[v]^2_{H^1( Q^+_\rho )}+ [w_1]^2_{H^1(\pa ' Q^+_\rho )}+\frac{1}{\rho}[w_1]^2_{H^1(Q^+_\rho )}\\
         &+ [w_2]^2_{H^1(\pa ' Q^+_\rho )}+\frac{1}{\rho}[w_2]^2_{H^1( Q^+_\rho )} \Big)
     \\
  & \le C  \left(\frac{\rho}{R}\right)^{n} \left(  [v]^2_{H^1(\pa ' Q^+_R )} + \frac{1}{R}[v]^2_{H^1(Q^+_R )}\right)+ \frac{C}{R}\sum_{i=1}^n \| F_i \|^2_{L^2( Q^+_R)}\\
   &+C \| F_n\|^2_{L^2( \pa' Q^+_R)} + C\| f \|^2_{L^2(\pa ' Q^+_R)}\\
    & \le C  \left(\frac{\rho}{R}\right)^{n} \left(  [u]^2_{H^1(\pa ' Q^+_R )} + \frac{1}{R}[u]^2_{H^1(Q^+_R )}\right)+ \frac{C}{R}\sum_{i=1}^n \| F_i \|^2_{L^2( Q^+_R)}\\
   &+C \| F_n\|^2_{L^2( \pa' Q^+_R)} + C\| f \|^2_{L^2(\pa ' Q^+_R)}.
      \end{split}
\end{equation} This finishes the proof of \eqref{eq:non-homogeneous-iteration-1}.

By \eqref{eq:w1estimate}-\eqref{eq:w2estimate} and \eqref{eq:homogeneous iteration derivative 2}, we compute
     \begin{equation}\label{eq:nonhomogeneous iteration computation2}
  \begin{split}
      &[u-P(u,\rho)]^2_{H^1(\pa ' Q^+_\rho )}+\frac{1}{\rho}[u    -P(u,\rho)]^2_{H^1(Q^+_\rho )}\\
      &+[u-\tilde{P}(u,\rho)]^2_{H^1(\pa ' Q^+_\rho )}+\frac{1}{\rho}[u    -\tilde{P}(u,\rho)]^2_{H^1(Q^+_\rho )}\\
            & \le 16\Big( [v-P(v,\rho)]^2_{H^1(\pa ' Q^+_\rho )} +\frac{1}{\rho}[v    -P(v,\rho)]^2_{H^1(Q^+_\rho )}\\
            &+[v-\tilde{P}(v,\rho)]^2_{H^1(\pa ' Q^+_\rho )}+\frac{1}{\rho}[v    -\tilde{P}(v,\rho)]^2_{H^1(Q^+_\rho )}\\
            &+ [w_1]^2_{H^1(\pa ' Q^+_\rho )} + \frac{1}{\rho}[w_1]^2_{H^1( Q^+_\rho )} + [w_2]^2_{H^1(\pa ' Q^+_\rho )}+ \frac{1}{\rho}[w_2]^2_{H^1( Q^+_\rho )} \Big)
     \\
  & \le C \left(\frac{\rho}{R}\right)^{n+2} \Bigg( [v-P(v,R)]^2_{H^1(\pa ' Q^+_R )}+ \frac{1}{R} [v-P(v,R)]^2_{H^1(Q^+_R )}\\
  &+     [v-\tilde{P}(v,R)]^2_{H^1(\pa ' Q^+_R )}+ \frac{1}{R} [v-\tilde{P}(v,R)]^2_{H^1(Q^+_R )}  \Bigg)\\
  &      +16\Big( [w_1]^2_{H^1(\pa ' Q^+_\rho )} + \frac{1}{\rho}[w_1]^2_{H^1( Q^+_\rho )} + [w_2]^2_{H^1(\pa ' Q^+_\rho )}+ \frac{1}{\rho}[w_2]^2_{H^1( Q^+_\rho )} \Big)      \\
  & \le C\left(\frac{\rho}{R}\right)^{n+2} \Bigg( [u-P(u,R)]^2_{H^1(\pa ' Q^+_R )}+ \frac{1}{R} [u-P(u,R)]^2_{H^1(Q^+_R )}  \\
 &+ [u-\tilde{P}(u,R)]^2_{H^1(\pa ' Q^+_R )}+ \frac{1}{R} [u-\tilde{P}(u,R)]^2_{H^1(Q^+_R )} \Bigg)\\
  & +C\Big( [w_1]^2_{H^1(\pa ' Q^+_\rho )} + \frac{1}{\rho}[w_1]^2_{H^1( Q^+_\rho )} + [w_2]^2_{H^1(\pa ' Q^+_\rho )}+ \frac{1}{\rho}[w_2]^2_{H^1( Q^+_\rho )} \Big)     \\
  & \le  C\left(\frac{\rho}{R}\right)^{n+2} \Bigg( [u-P(u,R)]^2_{H^1(\pa ' Q^+_R )}+ \frac{1}{R} [u-P(u,R)]^2_{H^1(Q^+_R )}  \\
 &+ [u-\tilde{P}(u,R)]^2_{H^1(\pa ' Q^+_R )}+ \frac{1}{R} [u-\tilde{P}(u,R)]^2_{H^1(Q^+_R )} \Bigg)\\
  &+ \frac{C}{R}  \sum_{i=1}^n \| F_i \|^2_{L^2( Q^+_R)} +C\| F_n \|^2_{L^2(\pa ' Q^+_R)} + C\| f \|^2_{L^2(\pa ' Q^+_R)}.
 \end{split}
\end{equation}
for any $\rho \in (0,R/2]$. The first calculation \eqref{eq:nonhomogeneous iteration computation1} proves the first inequality \eqref{eq:non-homogeneous-iteration-1} of Lemma \ref{lem:non-homogeneous-iteration}. To prove the second inequality \eqref{eq:non-homogeneous-iteration-2},  we fix the last term in the last line of  the calculation \eqref{eq:nonhomogeneous iteration computation2}. To achieve that, set 
$$
U:=u-P(u,R),
$$
then $U$ is a $H^{1}\left(\pa ' Q^+_R \right) \bigcap L^{\infty}\left(I_R:H^1(B_R^+)\right)$ CIE solution satisfying
\begin{equation*}
 \left\{
\begin{aligned}
  \pa_i(a_{ij}\pa_j U)  &=   \pa_i F_i \quad  \mbox{in }    Q^+_R,     \\
   \partial_{t}U-  a_{nj}\pa_jU  &=  f-(f)_R \quad  \mbox{on }  \pa ' Q^+_R .
\end{aligned}
\right.
\end{equation*}
Applying the calculation shown in \eqref{eq:nonhomogeneous iteration computation2} to $U$, 
\begin{equation*} 
  \begin{split}
          &[U-P(U,\rho)]^2_{H^1(\pa ' Q^+_\rho )}+\frac{1}{\rho} [U-P(U,\rho)]^2_{H^1(Q^+_\rho )}\\
          &+[U-\tilde{P}(U,\rho)]^2_{H^1(\pa ' Q^+_\rho )}+\frac{1}{\rho} [U-\tilde{P}(U,\rho)]^2_{H^1(Q^+_\rho )}\\
& \le\left(\frac{\rho}{R}\right)^{n+2} \Bigg( [U-P(U,R)]^2_{H^1(\pa ' Q^+_R )}+ \frac{1}{R} [U-P(U,R)]^2_{H^1(Q^+_R )} \\
&+    [U-\tilde{P}(U,R)]^2_{H^1(\pa ' Q^+_R )}+ \frac{1}{R} [U-\tilde{P}(U,R)]^2_{H^1(Q^+_R )}   \Bigg)\\
  &+ \frac{C}{R}  \sum_{i=1}^n \| F_i \|^2_{L^2( Q^+_R)} +C\| F_n \|^2_{L^2(\pa ' Q^+_R)} + C\| f-(f)_R \|^2_{L^2(\pa ' Q^+_R)}.
 \end{split}
\end{equation*}
\eqref{eq:non-homogeneous-iteration-2} is proved by noticing
$$
[U-P(U,\rho)]^2_{H^1(\pa ' Q^+_\rho )}=[u-P(u,\rho)]^2_{H^1( \pa ' Q^+_\rho )}, \  [U-P(U,\rho)]^2_{H^1( Q^+_\rho )}=[u-P(u,\rho)]^2_{H^1( Q^+_\rho )}
$$
and
$$
[U-\tilde{P}(U,\rho)]^2_{H^1(\pa ' Q^+_\rho )}=[u-\tilde{P}(u,\rho)]^2_{H^1(\pa ' Q^+_\rho )}, \ [U-\tilde{P}(U,\rho)]^2_{H^1( Q^+_\rho )}=[u-\tilde{P}(u,\rho)]^2_{H^1( Q^+_\rho )}.
$$
\end{proof}

Lemma \ref{lem:non-homogeneous-iteration} is used to apply iteration for Schauder estimate. We do not have the iteration on 
$$
[u]^2_{H^1(\pa ' Q^+_\rho )}
$$
solely. Instead, we successfully launch the iteration on
$$
[u]^2_{H^1(\pa ' Q^+_\rho )} + \frac{1}{\rho}[u]^2_{H^1(Q^+_\rho )}.
$$
In the next section, we freeze the coefficients to derive estimate of $[u]^2_{C^{1+\alpha}(\pa ' Q^+_{R/2} )}$ and $[u]^2_{C^{1+\alpha}(Q^+_{R/2} )}$.
\section{Schauder Estimates } \label{sec:Schauder}

We adapt Companato theory to get Schauder estimates to CIE solutions of \eqref{eq:boundary diffusion local standard}. As preparation, we recall the famous iteration lemma.
\begin{lem}\label{lem:schauder-iteration}
Let
 $$f:(0,R_0] \rightarrow \R $$
be a non negative, non decreasing function and for any $\rho, R :  0<\rho < R \le R_0$, there holds 
\begin{equation} \label{eq:schauder-iteration-1}
  \begin{split}
f(\rho) \le A\left( \left(\frac{\rho}{R}\right)^\alpha + \epsilon \right) f(R) + BR^{\beta}
 \end{split},
\end{equation}
where $A$,$B$,$\alpha$,$\beta$ are non negative constants and $0<\beta<\alpha$, then there exists a constant $\epsilon_0=\epsilon_0(A,\alpha, \beta)$ such that 
\begin{equation} \label{eq:schauder-iteration-2}
  \begin{split}
f(\rho) \le C(A,\alpha, \beta)\left( \left(\frac{\rho}{R}\right)^\beta  f(R)  + B\rho^{\beta}  \right),\quad  0<\rho <R\le R_0
 \end{split}
\end{equation}
whenever $\epsilon \le \epsilon_0 $.

\end{lem}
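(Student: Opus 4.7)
\medskip

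\noindent \textbf{Proof proposal.} The strategy is the classical Campanato/Morrey dyadic iteration: pick a geometric ratio $\tau\in(0,1)$ suited to the exponents, verify a one-step decay at that ratio, iterate along the scales $R,\tau R,\tau^2 R,\ldots$, and then pass from the discrete sequence to an arbitrary radius using monotonicity of $f$. First I would fix an intermediate exponent $\gamma$ with $\beta<\gamma<\alpha$ and choose $\tau\in(0,1)$ small enough so that $2A\tau^{\alpha}\le\tau^{\gamma}$, i.e.\ $\tau^{\alpha-\gamma}\le 1/(2A)$. Then I would set $\epsilon_{0}:=\tau^{\alpha}$, so that whenever $\epsilon\le\epsilon_{0}$ the assumption \eqref{eq:schauder-iteration-1} applied with $\rho=\tau R$ yields
\begin{equation*}
f(\tau R)\le A\bigl(\tau^{\alpha}+\epsilon\bigr)f(R)+BR^{\beta}\le 2A\tau^{\alpha}f(R)+BR^{\beta}\le \tau^{\gamma}f(R)+BR^{\beta}.
\end{equation*}

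Next I would iterate this inequality on the scales $R_{k}:=\tau^{k}R$: the same argument (applied with $R$ replaced by $R_{k}$) gives $f(R_{k+1})\le \tau^{\gamma}f(R_{k})+BR_{k}^{\beta}$. A straightforward induction on $k$ then produces
\begin{equation*}
f(\tau^{k}R)\le \tau^{k\gamma}f(R)+BR^{\beta}\sum_{j=0}^{k-1}\tau^{(k-1-j)\gamma}\,\tau^{j\beta}
=\tau^{k\gamma}f(R)+BR^{\beta}\tau^{(k-1)\beta}\sum_{l=0}^{k-1}\tau^{l(\gamma-\beta)}.
\end{equation*}
Because $\gamma>\beta$, the geometric sum is bounded by $1/(1-\tau^{\gamma-\beta})$, hence by a constant $C(A,\alpha,\beta)$.

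Finally, for an arbitrary $\rho\in(0,R]$ I would choose the unique integer $k\ge 0$ with $\tau^{k+1}R<\rho\le\tau^{k}R$. Monotonicity of $f$ yields $f(\rho)\le f(\tau^{k}R)$, and the defining inequalities $\tau^{k}\le \tau^{-1}(\rho/R)$ give, after using $\gamma>\beta$ and $\rho/R\le 1$, the bounds $\tau^{k\gamma}\le\tau^{-\gamma}(\rho/R)^{\gamma}\le\tau^{-\gamma}(\rho/R)^{\beta}$ and $R^{\beta}\tau^{(k-1)\beta}\le \tau^{-2\beta}\rho^{\beta}$. Combining these with the iterated estimate produces \eqref{eq:schauder-iteration-2} with constant $C=C(A,\alpha,\beta)$, after absorbing the $\tau$-powers.

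There is no real obstacle here, the argument is wholly standard and the only subtlety is the balance between the three parameters: one must choose $\gamma$ strictly between $\beta$ and $\alpha$ to leave room both for absorbing the factor $A$ into the geometric decay (via the choice $\tau^{\alpha-\gamma}\le 1/(2A)$) and for summing the error geometrically in the $\beta$ column (via $\gamma>\beta$). The smallness threshold $\epsilon_{0}$ then depends only on $A,\alpha,\beta$ through $\tau$, as required.
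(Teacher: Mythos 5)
Your proof is correct and is exactly the standard Campanato-style iteration argument for this lemma: fix an intermediate exponent $\gamma\in(\beta,\alpha)$, choose the dyadic ratio $\tau$ so that the $A\tau^{\alpha}$ factor is absorbed into $\tau^{\gamma}$, set $\epsilon_0=\tau^{\alpha}$, iterate along the geometric scales $\tau^kR$, and interpolate to general $\rho$ by monotonicity. The induction formula and the geometric-sum bound using $\gamma>\beta$ check out, as does the final passage to arbitrary $\rho$. Note that the paper itself states this lemma without proof, labeling it the ``famous iteration lemma'' (it is Lemma 3.4 in Han--Lin or Lemma 2.1, Ch.~III of Giaquinta), so your write-up simply supplies the standard textbook argument the paper implicitly invokes; there is nothing to diverge from.
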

As another preparation, we define the Morrey spaces and Campanato spaces related to the local boundary diffusion equation \eqref{eq:boundary diffusion local standard}. For $p\ge 1, \theta\ge0$, we define Morrey spaces $L^{p,\theta}(\pa ' Q^+_R)$ and  $L^{p,\theta}( Q^+_R)$ with norms
$$
\| u \|_{L^{p,\theta}(\pa ' Q^+_R)}=\left( \sup_{  0<\rho<R } \frac{1}{\rho^\theta}  \int_{ \pa ' Q^+_\rho } |u(y,s)|^p dy'ds         \right)^{\frac{1}{p}}
$$
and 
$$
\| u \|_{L^{p,\theta}(\pa ' Q^+_R)}=\left( \sup_{  0<\rho<R } \frac{1}{\rho^{\theta+1}}  \int_{  Q^+_\rho } |u(y,s)|^p dyds         \right)^{\frac{1}{p}}.
$$
And we define Campanato spaces $\mathscr{L}^{p,\theta}(\pa ' Q^+_R)$ and $\mathscr{L}^{p,\theta}( Q^+_R)$ with norms
 $$
 \| u \|_{\mathscr{L}^{p,\theta}(\pa ' Q^+_R)} = \left(\| u \|^p_{L^{p}(\pa ' Q^+_R)} + [u ]^p_{\mathscr{L}^{p,\theta}(\pa ' Q^+_R)}\right)^{\frac{1}{p}}
 $$
and
  $$
 \| u \|_{\mathscr{L}^{p,\theta}(Q^+_R)} = \left(\| u \|^p_{L^{p}( Q^+_R)} + [u ]^p_{\mathscr{L}^{p,\theta}( Q^+_R)}\right)^{\frac{1}{p}}
 $$
 for functions in $L^{p}(\pa ' Q^+_R)$ and $L^{p}( Q^+_R)$ respectively. The semi-norms are written as
$$
[u ]_{\mathscr{L}^{p,\theta}(\pa ' Q^+_R)}=\left( \sup_{ 0<\rho<R} \frac{1}{\rho^\theta}  \int_{ \pa ' Q^+_\rho } |u(y,s)-u_{ \rho   }|^p dy'ds             \right)^{\frac{1}{p}},
$$
and 
$$
[u ]_{\mathscr{L}^{p,\theta}( Q^+_R)}=\left( \sup_{ 0<\rho<R} \frac{1}{\rho^{\theta+1}}  \int_{  Q^+_\rho } |u(y,s)-\tilde{u_{ \rho   }}|^p dyds             \right)^{\frac{1}{p}},
$$
where
$$
u_{\rho} = \fint_{\pa ' Q^+_\rho  }  u (y,s)dy'ds  , 
$$
and
$$
\tilde{u_{\rho}} = \fint_{ Q^+_\rho  }  u (y,s)dyds.
$$
To utilize Companato theory,  we recall that 
$$
   L^{p,\theta}(\pa ' Q^+_R)  \cong \mathscr{L}^{p,\theta}(\pa ' Q^+_R), \quad L^{p,\theta}( Q^+_R)  \cong \mathscr{L}^{p,\theta}( Q^+_R), \quad \text{for } 0 \le \theta <n,
$$
and 
$$
   C^{\alpha}(\pa ' Q^+_R)  \cong \mathscr{L}^{p,\theta}(\pa ' Q^+_R),\quad C^{\alpha}( Q^+_R)  \cong \mathscr{L}^{p,\theta}(Q^+_R)   , \quad \text{for } n < \theta \le n+p,
$$
with $\alpha= \frac{\theta -n}{p}$. Denoting 
\be\label{eq:morrey space seminorm H1}
[u]^2_{H^{1,\theta}(\pa ' Q^+_R )}= \sum_{j=1}^{n}\| \pa_{j} u \|^2_{L^{2,\theta}(\pa ' Q^+_R)} + \| \pa_{t} u \|^2_{L^{2,\theta}(\pa ' Q^+_R)},
\ee
\be\label{eq:morrey space seminorm H1-interior}
[u]^2_{H^{1,\theta}( Q^+_R )}= \sum_{j=1}^{n}\| \pa_{j} u \|^2_{L^{2,\theta}( Q^+_R)} + \| \pa_{t} u \|^2_{L^{2,\theta}( Q^+_R)},
\ee
and
\be\label{eq:campanato space seminorm H1}
[u]^2_{\mathscr{H}^{1,\theta}(\pa ' Q^+_R )}= \sum_{j=1}^{n}[ \pa_{j} u ]^2_{\mathscr{L}^{2,\theta}(\pa ' Q^+_R)} + [ \pa_{t} u ]^2_{\mathscr{L}^{2,\theta}(\pa ' Q^+_R)},
\ee
\be\label{eq:campanato space seminorm H1-interior}
[u]^2_{\mathscr{H}^{1,\theta}( Q^+_R )}= \sum_{j=1}^{n}[ \pa_{j} u ]^2_{\mathscr{L}^{2,\theta}( Q^+_R)} + [ \pa_{t} u ]^2_{\mathscr{L}^{2,\theta}(Q^+_R)},
\ee
then we have expressions 
$$
[u]_{\mathscr{H}^{1,n+2\alpha}(\pa ' Q^+_R )} \cong  [u]_{  C^{1+\alpha}(\pa ' Q^+_R)   },\quad [u]_{\mathscr{H}^{1,n+2\alpha}( Q^+_R )} \cong  [u]_{  C^{1+\alpha}( Q^+_R)   },
$$
for some $\alpha: 0<\alpha \le 1  $.

Now, We seek local Schauder estimates for solution to \eqref{eq:boundary diffusion local standard} with condition \eqref{eq:compatibility} and coefficients satisfying \eqref{eq:bound of coefficients-local} by applying Lemma \ref{lem:non-homogeneous-iteration} to such solution.

\begin{thm}\label{thm:estimate morrey norm}
If  $u\in    H^{1}\left(\pa ' Q^+_R \right) \bigcap L^{\infty}\left(I_R:H^1(B_R^+)\right) \bigcap L^{2,\theta}(\pa ' Q^+_R  ) $ is a CIE solution to \eqref{eq:boundary diffusion local standard}, with coefficients satisfying \eqref{eq:bound of coefficients-local} and $f\in L^{2,\theta}(\pa ' Q^+_R)(0\le \theta <n ) $, then 
$$
D_{x} u ,\pa_t u \in L^{2,\theta}(\pa ' Q^+_{R/2}), \quad D_{x} u ,\pa_t u \in L^{2,\theta}( Q^+_{R/2}),
$$
and 
\be\label{eq:estimate morrey norm}
\begin{split} 
& [u]^2_{H^{1,\theta}(\pa ' Q^+_{R/2} )}+[u]^2_{H^{1,\theta}( Q^+_{R/2} )} \\
&\le C \left( [u]^2_{H^1(\pa ' Q^+_R )} +\frac{1}{R}[u]^2_{H^1( Q^+_R )} + \| u \|^2_{ L^{2,\theta}(\pa ' Q^+_R)} + \| f \|^2_{ L^{2,\theta}(\pa ' Q^+_R)}  \right),
    \end{split}
\ee
where the positive constant $C$ is depending on $n,\Lambda,\theta$.
\end{thm}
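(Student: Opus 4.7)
The plan is a Campanato-style freezing argument centered at boundary points, coupling the iteration estimate \eqref{eq:non-homogeneous-iteration-1} of Lemma \ref{lem:non-homogeneous-iteration} with the abstract iteration Lemma \ref{lem:schauder-iteration}. The key quantity to iterate is
\[
\phi(\rho;z_0):=[u]^2_{H^1(\pa'Q^+_\rho(z_0))}+\tfrac{1}{\rho}[u]^2_{H^1(Q^+_\rho(z_0))},
\]
whose mixed boundary/interior structure is dictated by the left-hand side of \eqref{eq:non-homogeneous-iteration-1}.

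Fix a center $z_0=(x_0,t_0)\in\overline{\pa'Q^+_{R/2}}$ and freeze coefficients by introducing the constant matrix $\bar A_{ij}:=a(z_0)\tilde{\varphi}(x_0)a_{ij}(z_0)$, which is symmetric and uniformly elliptic by \eqref{eq:bound of coefficients-local}. Rewrite \eqref{eq:boundary diffusion local standard} in the constant-coefficient inhomogeneous form \eqref{eq:diffusion-local-constant coefficients-Non homogeous}: the interior equation becomes $\pa_i(\bar A_{ij}\pa_j u)=\pa_i F_i$ with
\[
F_i:=\bigl[\bar A_{ij}-a(x,t)\tilde{\varphi}(x)a_{ij}(x,t)\bigr]\pa_j u,
\]
after incorporating the divergence-free correction that reconciles the interior coefficient $aa_{ij}$ with the freezing matrix $a\tilde{\varphi}a_{ij}$ (using that $aa_{ij}\pa_j u$ is solenoidal by the original interior equation); the boundary equation becomes $\pa_tu-\bar A_{nj}\pa_j u=\tilde f$ with
\[
\tilde f:=f-bu+\bigl[a(x,t)\tilde{\varphi}(x)a_{nj}(x,t)-\bar A_{nj}\bigr]\pa_j u.
\]
The $C^\alpha$ bounds in \eqref{eq:bound of coefficients-local} yield $|F_i|+|\tilde f-f+bu|\le Cr^\alpha|D_xu|$ on $Q^+_r(z_0)$ for every $r\le R$.

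Applying \eqref{eq:non-homogeneous-iteration-1} on $Q^+_\rho(z_0)\subset Q^+_r(z_0)$ and inserting these pointwise perturbation bounds produces, for $0<\rho<r$,
\[
\phi(\rho;z_0)\le C\bigl[(\rho/r)^n+r^{2\alpha}\bigr]\phi(r;z_0)+C\bigl(\|f\|^2_{L^2(\pa'Q^+_r(z_0))}+\|u\|^2_{L^2(\pa'Q^+_r(z_0))}\bigr).
\]
Dominating the zero-order terms by Morrey norms through $\|f\|^2_{L^2(\pa'Q^+_r(z_0))}\le r^\theta\|f\|^2_{L^{2,\theta}(\pa'Q^+_R)}$ (and similarly for $u$), Lemma \ref{lem:schauder-iteration} applied with $\alpha\mapsto n$, $\beta\mapsto\theta<n$ and $\epsilon\sim r^{2\alpha}$ (small on small scales) yields
\[
\phi(\rho;z_0)\le C(\rho/r)^\theta\phi(r;z_0)+C\rho^\theta\bigl(\|f\|^2_{L^{2,\theta}(\pa'Q^+_R)}+\|u\|^2_{L^{2,\theta}(\pa'Q^+_R)}\bigr).
\]
Taking $r$ comparable to $R$ and then $\sup_{\rho}\rho^{-\theta}\phi(\rho;z_0)$ produces the Morrey bound at $z_0$, and a standard covering of $\overline{\pa'Q^+_{R/2}}$ assembles the boundary part of \eqref{eq:estimate morrey norm}. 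For cylinders lying entirely inside $Q^+_{R/2}$ the interior equation is elliptic in the spatial variables with $t$ as a parameter; combining this with the $L^2$ control of $\pa_tu$ furnished by Condition \ref{con:compatibility} yields, via standard interior elliptic/parabolic Schauder iteration, the matching interior Morrey bound.

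The principal obstacle is synchronizing the two distinct dimensional weights throughout the iteration: the $\rho^{-1}$ weight in $\phi$, the shift-by-one between the Morrey exponents on the $n$-dimensional boundary ($\rho^{-\theta}$) and the $(n{+}1)$-dimensional interior ($\rho^{-\theta-1}$), and the decay $(\rho/r)^n$ in \eqref{eq:non-homogeneous-iteration-1} must align exactly in order for Lemma \ref{lem:schauder-iteration} to close. A secondary technicality is the $\tilde{\varphi}$ mismatch in the freezing step, overcome by the solenoidal correction noted above.
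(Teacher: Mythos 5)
Your overall scheme — freeze coefficients, apply the first iteration estimate \eqref{eq:non-homogeneous-iteration-1} of Lemma~\ref{lem:non-homogeneous-iteration} to the coupled quantity $[u]^2_{H^1(\pa'Q^+_\rho)}+\rho^{-1}[u]^2_{H^1(Q^+_\rho)}$, close the iteration via Lemma~\ref{lem:schauder-iteration}, and read off the Morrey norms — is the same as the paper's. The gap is in the freezing step.

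You freeze with the single matrix $\bar A_{ij}=a(z_0)\tilde\varphi(x_0)a_{ij}(z_0)$ for both the interior and the boundary equation, and claim that the interior becomes $\pa_i(\bar A_{ij}\pa_j u)=\pa_i F_i$ with
$F_i=[\bar A_{ij}-a(x,t)\tilde\varphi(x)a_{ij}(x,t)]\pa_j u$, justified by a ``divergence-free correction'' based on $\pa_i(aa_{ij}\pa_j u)=0$. This does not go through. What the interior equation \eqref{eq:boundary diffusion local standard} actually gives is $\pa_i\bigl([\bar A_{ij}-a(x,t)a_{ij}(x,t)]\pa_j u\bigr)=\pa_i(\bar A_{ij}\pa_j u)$, i.e.\ the admissible $F_i$ is the difference of $aa_{ij}\pa_j u$ from its frozen value. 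Your proposed $F_i$ differs from this by $a(1-\tilde\varphi)a_{ij}\pa_j u$, whose divergence $-\pa_i(a\tilde\varphi a_{ij}\pa_j u)=(\pa_i\tilde\varphi)\,aa_{ij}\pa_j u$ is not zero (indeed the zero-order term $\int_{Q^+_R}aa_{ij}\pa_i u\,\pa_j\tilde\varphi\,\phi$ appears explicitly in the very weak formulation \eqref{eq:very weak solution of localized problem}, precisely because $a\tilde\varphi a_{ij}\pa_j u$ is not solenoidal). Conversely, the correct $F_i=[\bar A_{ij}-a(x,t)a_{ij}(x,t)]\pa_j u$ is not of order $r^\alpha|D_xu|$: even at $(x,t)=z_0$ it equals $a(z_0)(\tilde\varphi(x_0)-1)a_{ij}(z_0)\pa_j u$, which does not vanish in general. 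So with your choice of $\bar A_{ij}$, either the interior rewriting is wrong or the perturbation fails to be small, and the smallness $|F_i|\le Cr^\alpha|D_xu|$ that the iteration needs is unavailable. The paper avoids this by freezing each equation with its own natural coefficient — the interior with $a(0,0)a_{ij}(0,0)$ (no $\tilde\varphi$), the boundary with $a(0,0)\tilde\varphi(0)a_{nj}(0,0)$ — and the resulting $F_i$ and $\hat f$ are then genuinely of order $R^\alpha|D_xu|$; the two frozen matrices differ only by the positive scalar $\tilde\varphi(0)$, which can be absorbed by rescaling the interior equation so that a single constant matrix fits the form \eqref{eq:diffusion-local-constant coefficients-Non homogeous}. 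Your ``secondary technicality'' paragraph flags the $\tilde\varphi$ mismatch but the solenoidal correction it invokes does not exist. A minor remark: the paper obtains the interior Morrey bound directly from the $\rho^{-1}$-weighted interior term in \eqref{eq:non-homogeneous-iteration-1}, so the separate ``interior elliptic/parabolic Schauder'' argument you add for interior cylinders is unnecessary.
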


\begin{proof}

To apple Lemma \ref{lem:non-homogeneous-iteration}, we write a rearranged version of  \eqref{eq:boundary diffusion local standard} as
\begin{equation}\label{eq:boundary diffusion local-freezing coefficients}
 \left\{
\begin{aligned}
  \pa_i(a(0,0)a_{ij}(0,0)\pa_j u)  &=   \pa_i F_i  \quad  \mbox{in }    Q^+_{R},     \\
   \partial_{t}u- a(0,0)\tilde{\varphi}(0)a_{nj}(0,0)\pa_ju   &=  \hat{f} \quad  \mbox{on }  \pa ' Q^+_{R} .
\end{aligned}
\right.
\end{equation} 
where
$$
F_i =  \sum_j \left[ a(0,0)a_{ij}(0,0)-a(x,t)a_{ij}(x,t) \right]\pa_{j}u, 
$$
$$
\hat{f}=\left[a(x,t)\tilde{\varphi}(x)a_{nj}(x,t)-a(0,0)\tilde{\varphi}(0)a_{nj}(0,0)\right]\pa_j u +f-bu,
$$
then 
$$
\int_{ \pa' Q^+_{R/2}  }  \hat{f}^2  \le C\left(   \int_{ \pa' Q^+_{R/2}  }  f^2 + \int_{ \pa' Q^+_{R/2}  }  u^2 +   R^{2\alpha}  [u]^2_{H^1(\pa ' Q^+_{R/2} )}        \right) 
$$
and
$$
 \sum_{i=1}^n\| F_i \|^2_{L^2( Q^+_{R/2})} \le C     R^{2\alpha}[u]^2_{H^1( Q^+_{R/2} )}, \quad \| F_n \|^2_{L^2( \pa ' Q^+_{R/2})} \le C     R^{2\alpha}[u]^2_{H^1( \pa' Q^+_{R/2} )} .     
$$
Employing Lemma \ref{lem:non-homogeneous-iteration} to solution $u$ of equation \eqref{eq:boundary diffusion local-freezing coefficients},
\begin{equation*} 
  \begin{split}
 &[u]^2_{H^1(\pa ' Q^+_\rho )} +\frac{1}{\rho}[u]^2_{H^1( Q^+_\rho )} \\
 &\le C\left(\frac{\rho}{R}\right)^{n} \left(  [u]^2_{H^1(\pa ' Q^+_{R/2} )} + \frac{1}{R}[u]^2_{H^1(Q^+_{R/2} )}\right)+ \frac{C}{R}\sum_{i=1}^n \| F_i \|^2_{L^2( Q^+_{R/2})}\\
 &+C \| F_n\|^2_{L^2( \pa' Q^+_{R/2})} + C\| \hat{f} \|^2_{L^2(\pa ' Q^+_{R/2})},
    \end{split}
\end{equation*}
for $\rho \in (0,R/2)$. 
Consequently,
\begin{equation*} 
  \begin{split}
 [u]^2_{H^1(\pa ' Q^+_\rho )} +\frac{1}{\rho}[u]^2_{H^1( Q^+_\rho )}&\le C \Bigg( \left[ \left(\frac{\rho}{R}\right)^{n}+     
  R^{2\alpha} \right]
  \left(  [u]^2_{H^1(\pa ' Q^+_{R/2} )} + \frac{1}{R}[u]^2_{H^1(Q^+_{R/2} )}\right) \\
  &+ R^\theta \left[\| f \|^2_{L^{2,\theta}(\pa ' Q^+_{R/2})}+  \| u \|^2_{L^{2,\theta}(\pa ' Q^+_{R/2})}   \right]  \Bigg),
    \end{split}
\end{equation*}
for $0<\rho<R/2$. By Lemma \ref{lem:schauder-iteration}, there is a constant $\epsilon_0=\epsilon_0(n,\Lambda,\theta )$ such that
\begin{equation}\label{eq:H^1-itertation}
\begin{split}
&[u]^2_{H^1(\pa ' Q^+_\rho )}+ \frac{1}{\rho}[u]^2_{H^1( Q^+_\rho )}\\
&  \le C \rho^{\theta} \left( \frac{1}{R_0^\theta}\left( [u]^2_{H^1(\pa ' Q^+_{R_0} )}+\frac{1}{R_0}[u]^2_{H^1(Q^+_{R_0} )}\right) +  \| f \|^2_{L^{2,\theta}(\pa ' Q^+_{R/2})}+  \| u \|^2_{L^{2,\theta}(\pa ' Q^+_{R/2})}      \right)
  \end{split}
\end{equation}
for $\rho \in (0,R_0]$, $R_0=\min\{ R/2,\epsilon_0^{1/2\alpha}  \}$.  For $\rho \in (R_0,R/2]$, 
\begin{equation}\label{eq:H^1-itertation-2}
\begin{split}
&[u]^2_{H^1(\pa ' Q^+_\rho )} +\frac{1}{\rho}[u]^2_{H^1(Q^+_\rho )} \\
 & \le C \left( \frac{\rho}{R_0}  \right)^\theta \left( [u]^2_{H^1(\pa ' Q^+_{R} )}+\frac{1}{R}[u]^2_{H^1(Q^+_{R} )}  +  \| f \|^2_{L^{2,\theta}(\pa ' Q^+_{R/2})}+  \| u \|^2_{L^{2,\theta}(\pa ' Q^+_{R/2})}      \right).
\end{split}
\end{equation}
According to notation \eqref{eq:morrey space seminorm H1} and the definition of the Morrey spaces, 
\begin{equation*}
\begin{split} 
& [u]^2_{H^{1,\theta}(\pa ' Q^+_{R/2} )}+[u]^2_{H^{1,\theta}( Q^+_{R/2} )} \\
&\le C \left( [u]^2_{H^1(\pa ' Q^+_R )} +\frac{1}{R}[u]^2_{H^1( Q^+_R )} + \| u \|^2_{ L^{2,\theta}(\pa ' Q^+_R)} + \| f \|^2_{ L^{2,\theta}(\pa ' Q^+_R)}  \right).
    \end{split}
\end{equation*}
\end{proof}
Using above estimates on the Morrey spaces, we derive the Schauer estimates.

\begin{thm}\label{thm:estimate campanto norm}If  $u\in    H^{1}\left(\pa ' Q^+_R \right) \bigcap L^{\infty}\left(I_R:H^1(B_R^+)\right) \bigcap C^{\alpha}(\pa ' Q^+_R  ) (0<\alpha<1)$  is a CIE solution to \eqref{eq:boundary diffusion local standard} with coefficients satisfying \eqref{eq:bound of coefficients-local} and $f\in C^{\alpha}(\pa ' Q^+_R) $, then 
$$
D_{x} u ,\pa_t u \in C^{\alpha}(\pa ' Q^+_{R/2}), \quad D_{x} u ,\pa_t u \in C^{\alpha}( Q^+_{R/2}),
$$
and 
\be\label{eq:estimate campanato norm}
[u]^2_{C^{1+\alpha}(\pa ' Q^+_{R/2} )} +[u]^2_{C^{1+\alpha}( Q^+_{R/2} )}\le C\left( [u]^2_{H^1(\pa ' Q^+_R )}+\frac{1}{R}[u]^2_{H^1(Q^+_R )} + \| u \|^2_{ C^{\alpha}(\pa ' Q^+_R)} + \| f \|^2_{ C^{\alpha}(\pa ' Q^+_R)}\right).
\ee
where the positive constant $C$ depends on $n,\Lambda,\theta$.
\end{thm}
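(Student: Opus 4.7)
The plan is to parallel the proof of Theorem \ref{thm:estimate morrey norm}, but replace the $H^1$-iteration \eqref{eq:non-homogeneous-iteration-1} with the stronger oscillation iteration \eqref{eq:non-homogeneous-iteration-2} of Lemma \ref{lem:non-homogeneous-iteration}, and to exploit the assumed $C^\alpha$-regularity of both the coefficients and $u$ so that the resulting forcing decays at rate $R^{n+2\alpha}$. After translating the base point to the origin, I would freeze the coefficients at $(0,0)$ exactly as in \eqref{eq:boundary diffusion local-freezing coefficients}. Because the differences $a(0,0)a_{ij}(0,0)-a(x,t)a_{ij}(x,t)$ and $a(0,0)\tilde\varphi(0)a_{nj}(0,0)-a(x,t)\tilde\varphi(x)a_{nj}(x,t)$ vanish at $(0,0)$ and are H\"older with exponent $\alpha$, the error sources $F_i,F_n$ obey $|F_i|,|F_n|\le CR^\alpha|D_x u|$ pointwise. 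For the boundary source, subtracting the average and using $b,u\in C^\alpha$ (so $bu\in C^\alpha$) gives
\[
\|\hat f-(\hat f)_R\|^2_{L^2(\pa' Q^+_R)}\le CR^{n+2\alpha}\bigl([f]^2_{C^\alpha(\pa' Q^+_R)}+\|u\|^2_{C^\alpha(\pa' Q^+_R)}\bigr)+CR^{2\alpha}[u]^2_{H^1(\pa' Q^+_R)}.
\]

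Introducing
\[
\Phi(\rho):=[u-P(u,\rho)]^2_{H^1(\pa' Q^+_\rho)}+\tfrac{1}{\rho}[u-P(u,\rho)]^2_{H^1(Q^+_\rho)}+[u-\tilde P(u,\rho)]^2_{H^1(\pa' Q^+_\rho)}+\tfrac{1}{\rho}[u-\tilde P(u,\rho)]^2_{H^1(Q^+_\rho)},
\]
I would apply \eqref{eq:non-homogeneous-iteration-2} to $u$ viewed as a weak solution of the frozen-coefficient equation. Combined with the error bounds above this yields, for $0<\rho\le R/2$,
\[
\Phi(\rho)\le C(\rho/R)^{n+2}\Phi(R)+CR^{2\alpha}\bigl([u]^2_{H^1(\pa' Q^+_R)}+\tfrac{1}{R}[u]^2_{H^1(Q^+_R)}\bigr)+CR^{n+2\alpha}\bigl([f]^2_{C^\alpha}+\|u\|^2_{C^\alpha}\bigr).
\]
The middle term is handled by Theorem \ref{thm:estimate morrey norm} applied with $\theta<n$ arbitrarily close to $n$: this provides $[u]^2_{H^1(\pa' Q^+_R)}+\frac{1}{R}[u]^2_{H^1(Q^+_R)}\le CR^\theta N_0$ with $N_0$ the right-hand side of \eqref{eq:estimate morrey norm}, so the middle term is at most $CR^{n+2\alpha-\varepsilon}N_0$ for any small $\varepsilon>0$.

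Invoking the iteration Lemma \ref{lem:schauder-iteration} with decay exponent $n+2$ and forcing exponent $n+2\alpha$ (the small loss $\varepsilon$ being reabsorbed by running the argument for every $\alpha'<\alpha$ and using that the constants depend only on $n,\Lambda,\alpha'$), one concludes $\Phi(\rho)\le C\rho^{n+2\alpha}\bigl(R^{-(n+2\alpha)}\Phi(R)+N_0+[f]^2_{C^\alpha}+\|u\|^2_{C^\alpha}\bigr)$. The Campanato--H\"older isomorphism $\mathscr{H}^{1,n+2\alpha}\cong C^{1+\alpha}$ recorded before the statement then translates this into the desired \eqref{eq:estimate campanato norm}. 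The main obstacle is keeping the two mismatched dimensional scalings -- boundary $\pa' Q^+_\rho$ of dimension $n$ and interior $Q^+_\rho$ of dimension $n+1$ -- aligned throughout: the weighted combination $\Phi$ (with the $1/\rho$ factor on the interior piece, forced by Lemma \ref{lem:non-homogeneous-iteration}) must be preserved verbatim through the freezing step, the Morrey bootstrap, and the final Campanato identification, else the exponent of $\rho$ in the forcing decay will be off and the $C^{1+\alpha}$ conclusion will be lost on one of the two pieces.
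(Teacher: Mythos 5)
Your freezing-coefficients setup, the appeal to \eqref{eq:non-homogeneous-iteration-2}, and the Campanato--H\"older endgame all match the paper's proof. The genuine gap is in how you kill the intermediate term $CR^{2\alpha}\bigl([u]^2_{H^1(\pa' Q^+_R)}+\tfrac{1}{R}[u]^2_{H^1(Q^+_R)}\bigr)$. Feeding in Theorem \ref{thm:estimate morrey norm} with $\theta<n$ gives only $R^{n-\varepsilon}$ decay for the $H^1$ seminorms, hence a forcing exponent $n+2\alpha-\varepsilon$ in the iteration lemma. After Lemma \ref{lem:schauder-iteration} and the Campanato isomorphism this yields $C^{1+\alpha-\varepsilon/2}$, not $C^{1+\alpha}$. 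Your fix --- ``run the argument for every $\alpha'<\alpha$ and reabsorb'' --- does not close the gap: the constant in Lemma \ref{lem:schauder-iteration} depends on the gap between the decay exponent $n+2$ and the forcing exponent, and as $\alpha'\to\alpha$ this constant is not controlled (indeed, $C(A,\alpha,\beta)$ in \eqref{eq:schauder-iteration-2} typically blows up as $\beta\uparrow\alpha$ in a fixed instance; more to the point, sending $\varepsilon\to 0$ in your family of estimates gives no uniform constant and no estimate at the endpoint).

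The paper resolves this with a two-step bootstrap that you omit. First it runs exactly your argument but with the lossy choice $\theta=n-\alpha$, landing on the exponent $n+\alpha$, hence $D_x u,\pa_t u\in C^{\alpha/2}$ near the center. This qualitative continuity is then used to upgrade the $H^1$ input: since $D_x u$ and $\pa_t u$ are now \emph{bounded} on $\pa'Q^+_{R/2}$ and $Q^+_{R/2}$, one has $[u]^2_{H^1(\pa' Q^+_{R/2})}\le C|\pa' Q^+_{R/2}|\cdot\|Du\|^2_{\infty}\le CR^n(\cdots)$ and similarly $\tfrac1R[u]^2_{H^1(Q^+_{R/2})}\le CR^n(\cdots)$ (displays \eqref{eq:H1 boudary estimate}--\eqref{eq:H1 interior estimate}). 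Substituting this sharp $R^n$-decay back into \eqref{eq:H^1-estimate-compare to polynomial-nonhomogeneous} makes the middle term scale as $R^{n+2\alpha}$ with no $\varepsilon$-loss, and a second application of Lemma \ref{lem:schauder-iteration} then yields the full $\rho^{n+2\alpha}$ decay of $\Phi(\rho)$ and hence $C^{1+\alpha}$. You should insert this intermediate regularity gain; without it the endpoint exponent is not reached.
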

\begin{proof}
As the proof of Theorem \ref{thm:estimate morrey norm}, we  rewrite equation \eqref{eq:boundary diffusion local standard} into \eqref{eq:boundary diffusion local-freezing coefficients} and utilize Lemma \ref{lem:non-homogeneous-iteration}, obtaining
\begin{equation*} 
  \begin{split}
         &[u-P(u,\rho)]^2_{H^1(\pa ' Q^+_\rho )}+\frac{1}{\rho}[u-P(u,\rho)]^2_{H^1(Q^+_\rho )}\\
         +&[u-\tilde{P}(u,\rho)]^2_{H^1(\pa ' Q^+_\rho )}+\frac{1}{\rho}[u-\tilde{P}(u,\rho)]^2_{H^1(Q^+_\rho )}\\
& \le  C\left(\frac{\rho}{R}\right)^{n+2} \Bigg( [u-P(u,R)]^2_{H^1(\pa ' Q^+_R )}+ \frac{1}{R} [u-P(u,R)]^2_{H^1(Q^+_R )}\\
&+[u-\tilde{P}(u,R)]^2_{H^1(\pa ' Q^+_R )}+ \frac{1}{R} [u-\tilde{P}(u,R)]^2_{H^1(Q^+_R )}\Bigg)\\
  &+  \frac{C}{R} \sum_{i=1}^n \| F_i \|^2_{L^2( Q^+_R)} +C\| F_n \|^2_{L^2(\pa ' Q^+_R)} + C\| f-(f)_R \|^2_{L^2(\pa ' Q^+_R)} 
 \end{split}
\end{equation*}
for $\rho \in (0,R]$. 

By the definition of $\hat{f}$, 
\begin{equation*} 
  \begin{split}
         \| \hat{f} -(\hat{f})_{R/2}\|^2_{L^2(\pa ' Q^+_{R/2})} &\le C \| \hat{f} -(f)_{R/2}\|^2_{L^2(\pa ' Q^+_{R/2})}  \\
       & \le C \left( \| f -(f)_{R/2}\|^2_{L^2(\pa ' Q^+_{R/2})}+ \|u\|^2_{L^2(\pa ' Q^+_{R/2})} +R^{2\alpha}  [u]^2_{H^1(\pa ' Q^+_{R/2} )} \right)  .
 \end{split}
\end{equation*}
Consequently,
\begin{equation} \label{eq:H^1-estimate-compare to polynomial-nonhomogeneous}
  \begin{split}
         &[u-P(u,\rho)]^2_{H^1(\pa ' Q^+_\rho )}+ \frac{1}{\rho}  [u-P(u,\rho)]^2_{H^1(Q^+_\rho )} \\
        & +[u-\tilde{P}(u,\rho)]^2_{H^1(\pa ' Q^+_\rho )}+\frac{1}{\rho}[u-\tilde{P}(u,\rho)]^2_{H^1(Q^+_\rho )}\\
         & \le  C \bigg[\left(\frac{\rho}{R}\right)^{n+2} \Bigg( [u-P(u,R/2)]^2_{H^1(\pa ' Q^+_{R/2} )}+  \frac{1}{R}[u-P(u,R/2)]^2_{H^1(Q^+_{R/2} )}\\
        & +[u-\tilde{P}(u,R/2)]^2_{H^1(\pa ' Q^+_{R/2} )}+\frac{1}{R}[u-\tilde{P}(u,R/2)]^2_{H^1(Q^+_{R/2} )}\Bigg) \\
         &  + R^{n+2\alpha}\| f\|^2_{\mathscr{L}^{2,n+2\alpha}(\pa ' Q^+_{R/2})}
          +  R^{n+2\alpha}\|u\|^2_{\mathscr{L}^{2,n+2\alpha}(\pa ' Q^+_{R/2})} \\
          &+R^{2\alpha}  \left( [u]^2_{H^1(\pa ' Q^+_{R/2} )}+ \frac{1}{R}    [u]^2_{H^1( Q^+_{R/2} )}  \right) \bigg] \\
      & \le  C \Bigg\{ \left(\frac{\rho}{R}\right)^{n+2} \Bigg( [u-P(u,R/2)]^2_{H^1(\pa ' Q^+_{R/2} )}+  \frac{1}{R}[u-P(u,R/2)]^2_{H^1(Q^+_{R/2} )}\\
        & +[u-\tilde{P}(u,R/2)]^2_{H^1(\pa ' Q^+_{R/2} )}+\frac{1}{R}[u-\tilde{P}(u,R/2)]^2_{H^1(Q^+_{R/2} )}\Bigg) \\
           & + R^{n+2\alpha}\| f\|^2_{C^{\alpha}(\pa ' Q^+_{R/2})}  +  R^{n+2\alpha}\|u\|^2_{C^{\alpha}(\pa ' Q^+_{R/2})} +R^{2\alpha}  \left( [u]^2_{H^1(\pa ' Q^+_{R/2} )}+ \frac{1}{R} [u]^2_{H^1( Q^+_{R/2} )}  \right) \Bigg\}
 \end{split}
\end{equation}
for $\rho \in (0,R/2]$, if we combine above two inequalities. By Theorem \ref{thm:estimate morrey norm}, 
\begin{equation}\label{eq:boudary diffusion schauder iteration1}
\begin{split}
&[u]^2_{H^1(\pa ' Q^+_{R/2} )}+ \frac{1}{R}[u]^2_{H^1( Q^+_{R/2} )}\\
& \le C R^{n-\alpha}\left( [u]^2_{H^1(\pa ' Q^+_{R} )}+\frac{1}{R}[u]^2_{H^1(Q^+_{R} )} +  \| f \|^2_{L^{2,n-\alpha}(\pa ' Q^+_{R})}+  \| u \|^2_{L^{2,n-\alpha}(\pa ' Q^+_{R})}     \right).
\end{split}
\end{equation}
Putting \eqref{eq:boudary diffusion schauder iteration1} into \eqref{eq:H^1-estimate-compare to polynomial-nonhomogeneous},  
\begin{equation} \label{eq:boudary diffusion schauder iteration2}
  \begin{split}
         &[u-P(u,\rho)]^2_{H^1(\pa ' Q^+_\rho )}+ \frac{1}{\rho}  [u-P(u,\rho)]^2_{H^1(Q^+_\rho )} \\
        & +[u-\tilde{P}(u,\rho)]^2_{H^1(\pa ' Q^+_\rho )}+\frac{1}{\rho}[u-\tilde{P}(u,\rho)]^2_{H^1(Q^+_\rho )}\\
         & \le  C \bigg[\left(\frac{\rho}{R}\right)^{n+2} \Bigg( [u-P(u,R/2)]^2_{H^1(\pa ' Q^+_{R/2} )}+  \frac{1}{R}[u-P(u,R/2)]^2_{H^1(Q^+_{R/2} )}\\
        & +[u-\tilde{P}(u,R/2)]^2_{H^1(\pa ' Q^+_{R/2} )}+\frac{1}{R}[u-\tilde{P}(u,R/2)]^2_{H^1(Q^+_{R/2} )}\Bigg) \\
          & + R^{n+\alpha} \Big( \| f\|^2_{\mathscr{L}^{2,n+2\alpha}(\pa ' Q^+_{R})}
         +  \|u\|^2_{\mathscr{L}^{2,n+2\alpha}(\pa ' Q^+_{R})} 
         +  [u]^2_{H^1(\pa ' Q^+_{R} )}  +  \frac{1}{R}  [u]^2_{H^1( Q^+_{R} )}  \Big) \bigg] \\
      & \le  C \bigg[\left(\frac{\rho}{R}\right)^{n+2} \Bigg( [u-P(u,R/2)]^2_{H^1(\pa ' Q^+_{R/2} )}+  \frac{1}{R}[u-P(u,R/2)]^2_{H^1(Q^+_{R/2} )}\\
        & +[u-\tilde{P}(u,R/2)]^2_{H^1(\pa ' Q^+_{R/2} )}+\frac{1}{R}[u-\tilde{P}(u,R/2)]^2_{H^1(Q^+_{R/2} )}\Bigg) \\
            &+ R^{n+\alpha} \Big( \| f\|^2_{C^{\alpha}(\pa ' Q^+_{R})} +  \|u\|^2_{C^{\alpha}(\pa ' Q^+_{R})} +  [u]^2_{H^1(\pa ' Q^+_{R} )}+ \frac{1}{R}  [u]^2_{H^1( Q^+_{R} )} \Big) \bigg]  
 \end{split}
\end{equation}
for $\rho \in (0,R/2]$.  Applying Lemma \ref{lem:schauder-iteration} to above inequality \eqref{eq:boudary diffusion schauder iteration2}, 
\begin{equation*} 
  \begin{split}
         &[u-P(u,\rho)]^2_{H^1(\pa ' Q^+_\rho )}  + \frac{1}{\rho}   [u-P(u,\rho)]^2_{H^1(Q^+_\rho )}        \\
         & +[u-\tilde{P}(u,\rho)]^2_{H^1(\pa ' Q^+_\rho )}+\frac{1}{\rho}[u-\tilde{P}(u,\rho)]^2_{H^1(Q^+_\rho )}\\
         & \le  C \rho^{n+\alpha} \Bigg\{
           \frac{1}{R^{n+\alpha}}   \Bigg( [u-P(u,R/2)]^2_{H^1(\pa ' Q^+_{R/2} )}+  \frac{1}{R}[u-P(u,R/2)]^2_{H^1(Q^+_{R/2} )}\\
        & +[u-\tilde{P}(u,R/2)]^2_{H^1(\pa ' Q^+_{R/2} )}+\frac{1}{R}[u-\tilde{P}(u,R/2)]^2_{H^1(Q^+_{R/2} )}\Bigg) \\
           &  +  \| f\|^2_{ C^{\alpha}(\pa ' Q^+_{R})}  +  \|u\|^2_{C^{\alpha}(\pa ' Q^+_{R})} +  [u]^2_{H^1(\pa ' Q^+_{R} )} + \frac{1}{R} [u]^2_{H^1(Q^+_{R} )} \Bigg\} \\
      & \le   C \rho^{n+\alpha} \bigg(
          [u]^2_{H^1(\pa ' Q^+_{R} )} +  \frac{1}{R}  [u]^2_{H^1(Q^+_{R} )} +  \| f\|^2_{ C^{\alpha}(\pa ' Q^+_{R})} +  \|u\|^2_{C^{\alpha}(\pa ' Q^+_{R})}     \bigg)  
 \end{split}
\end{equation*}
for $\rho \in (0,R/2]$.  Then
  \begin{equation*}
  \begin{split}
         &[u-P(u,\rho)]^2_{H^1(\pa ' Q^+_\rho )} +\frac{1}{\rho}[u-\tilde{P}(u,\rho)]^2_{H^1(Q^+_\rho )}\\
      & \le   C \rho^{n+\alpha} \bigg(
          [u]^2_{H^1(\pa ' Q^+_{R} )} +  \frac{1}{R}  [u]^2_{H^1(Q^+_{R} )} +  \| f\|^2_{ C^{\alpha}(\pa ' Q^+_{R})} +  \|u\|^2_{C^{\alpha}(\pa ' Q^+_{R})}     \bigg), 
 \end{split}
\end{equation*}
for $\rho \in (0,R/2]$.
By the definition of Campanato space, it provides
$$
[u]^2_{\mathscr{H}^{1,n+\alpha}(\pa ' Q^+_{R/2} )}  +   [u]^2_{\mathscr{H}^{1,n+\alpha}( Q^+_{R/2} )} \le   C  \bigg(
          [u]^2_{H^1(\pa ' Q^+_{R} )}+ \frac{1}{R}  [u]^2_{H^1(Q^+_{R} )}  +  \| f\|^2_{ C^{\alpha}(\pa ' Q^+_{R})} +  \|u\|^2_{C^{\alpha}(\pa ' Q^+_{R})}     \bigg). 
$$
As a result of the Campanto theory, 
$$
    \mathscr{L}^{2,n+\alpha}(\pa ' Q^+_{R/2})  \cong C^{\alpha/2}(\pa ' Q^+_{R/2}), \quad  \mathscr{L}^{2,n+\alpha}( Q^+_{R/2})  \cong C^{\alpha/2}( Q^+_{R/2})
$$
gives
$$
\pa_t u , D_x u \in C^{\alpha/2}(\pa ' Q^+_{R/2}), \quad \pa_t u , D_x u \in C^{\alpha/2}( Q^+_{R/2}).
$$
Correspondingly,
\begin{equation} \label{eq:H1 boudary estimate}
  \begin{split}
        [u]^2_{H^1(\pa ' Q^+_{R/2} )} &\le C |\pa ' Q^+_{R/2} |  \Big( | D_{x}u|^2_{C(\pa ' Q^+_{R/2})} + |\pa_t u|^2_{C(\pa ' Q^+_{R/2})} \Big)    \\
        &\le C R^{n}  [u]^2_{\mathscr{H}^{1,n+\alpha}(\pa ' Q^+_{R/2} )}\\
        &\le C  R^{n}  \bigg(
          [u]^2_{H^1(\pa ' Q^+_{R} )} +\frac{1}{R}[u]^2_{H^1( Q^+_{R} )} +  \| f\|^2_{ C^{\alpha}(\pa ' Q^+_{R})} +  \|u\|^2_{C^{\alpha}(\pa ' Q^+_{R})}     \bigg).
 \end{split}
\end{equation}
Similarly, 
\begin{equation} \label{eq:H1 interior estimate}
  \begin{split}
      \frac{1}{R}  [u]^2_{H^1(Q^+_{R/2} )} & \le \frac{C}{R}|Q^+_{R/2} |  \Big( | D_{x}u|^2_{C( Q^+_{R/2})} + |\pa_t u|^2_{C(Q^+_{R/2})} \Big)  \\
     &\le \frac{C}{R}  R^{n+1}  [u]^2_{\mathscr{H}^{1,n+\alpha}( Q^+_{R/2} )}\\
      &\le C  R^{n}  \bigg(
          [u]^2_{H^1(\pa ' Q^+_{R} )} +\frac{1}{R}[u]^2_{H^1( Q^+_{R} )} +  \| f\|^2_{ C^{\alpha}(\pa ' Q^+_{R})} +  \|u\|^2_{C^{\alpha}(\pa ' Q^+_{R})}     \bigg)    .
 \end{split}
\end{equation}
Plugging these two estimates \eqref{eq:H1 boudary estimate}-\eqref{eq:H1 interior estimate} into \eqref{eq:H^1-estimate-compare to polynomial-nonhomogeneous}, we have
\begin{equation} \label{eq:boudary diffusion schauder iteration3}
  \begin{split}
         &[u-P(u,\rho)]^2_{H^1(\pa ' Q^+_\rho )}  + \frac{1}{\rho}   [u-P(u,\rho)]^2_{H^1(Q^+_\rho )}        \\
         & +[u-\tilde{P}(u,\rho)]^2_{H^1(\pa ' Q^+_\rho )}+\frac{1}{\rho}[u-\tilde{P}(u,\rho)]^2_{H^1(Q^+_\rho )}\\
      & \le  C \bigg[\left(\frac{\rho}{R}\right)^{n+2}  \Bigg( [u-P(u,R/2)]^2_{H^1(\pa ' Q^+_{R/2} )}+  \frac{1}{R}[u-P(u,R/2)]^2_{H^1(Q^+_{R/2} )}\\
        & +[u-\tilde{P}(u,R/2)]^2_{H^1(\pa ' Q^+_{R/2} )}+\frac{1}{R}[u-\tilde{P}(u,R/2)]^2_{H^1(Q^+_{R/2} )}\Bigg) \\
           & + R^{n+2\alpha} \Big( \| f\|^2_{C^{\alpha}(\pa ' Q^+_{R/2})} +  \|u\|^2_{C^{\alpha}(\pa ' Q^+_{R/2})} +   [u]^2_{H^1(\pa ' Q^+_{R/2} )}+  \frac{1}{R} [u]^2_{H^1( Q^+_{R/2} )}  \Big) \bigg] .
 \end{split}
\end{equation}
for $\rho \in (0,R/2]$. Applying Lemma \ref{lem:schauder-iteration} again to \eqref{eq:boudary diffusion schauder iteration3}, 
 \begin{equation} \label{eq:boudary diffusion schauder iteration4}
  \begin{split}
         &[u-P(u,\rho)]^2_{H^1(\pa ' Q^+_\rho )}+[u-\tilde{P}(u,\rho)]^2_{H^1(Q^+_\rho )} \\
          & +[u-\tilde{P}(u,\rho)]^2_{H^1(\pa ' Q^+_\rho )}+\frac{1}{\rho}[u-\tilde{P}(u,\rho)]^2_{H^1(Q^+_\rho )}\\
         & \le  C\rho^{n+2\alpha} \Bigg\{\left(\frac{1}{R}\right)^{n+2\alpha}\bigg( [u-P(u,R/2)]^2_{H^1(\pa ' Q^+_{R/2} )}+  \frac{1}{R}[u-P(u,R/2)]^2_{H^1(Q^+_{R/2} )}\\
        & +[u-\tilde{P}(u,R/2)]^2_{H^1(\pa ' Q^+_{R/2} )}+\frac{1}{R}[u-\tilde{P}(u,R/2)]^2_{H^1(Q^+_{R/2} )}\bigg) \\
           &   \| f\|^2_{C^{\alpha}(\pa ' Q^+_{R/2})} +  \|u\|^2_{C^{\alpha}(\pa ' Q^+_{R/2})}
           +   [u]^2_{H^1(\pa ' Q^+_{R/2} )}+ \frac{1}{R}[u]^2_{H^1( Q^+_{R/2} )}   \Bigg\}  \\
            & \le  C \rho^{n+2\alpha} \bigg(
           [u]^2_{H^1(\pa ' Q^+_{R} )}+\frac{1}{R}[u]^2_{H^1(Q^+_{R} )} + \| f\|^2_{C^{\alpha}(\pa ' Q^+_{R})}  
            +  \|u\|^2_{C^{\alpha}(\pa ' Q^+_{R})}   \bigg)
 \end{split}
\end{equation}
for $\rho \in (0,R/2]$, so we have
\begin{equation} \label{eq:boudary diffusion schauder iteration5}
  \begin{split}
         &[u-P(u,\rho)]^2_{H^1(\pa ' Q^+_\rho )} +\frac{1}{\rho}[u-\tilde{P}(u,\rho)]^2_{H^1(Q^+_\rho )}\\
            & \le  C \rho^{n+2\alpha} \bigg(
           [u]^2_{H^1(\pa ' Q^+_{R} )}+\frac{1}{R}[u]^2_{H^1(Q^+_{R} )} + \| f\|^2_{C^{\alpha}(\pa ' Q^+_{R})}  
            +  \|u\|^2_{C^{\alpha}(\pa ' Q^+_{R})}   \bigg).
 \end{split}
\end{equation}  \eqref{eq:boudary diffusion schauder iteration5} implies 
$$
[u]_{\mathscr{H}^{1,n+2\alpha}(\pa ' Q^+_{R/2} )}+[u]_{\mathscr{H}^{1,n+2\alpha}(Q^+_{R/2} )} \le  C  \bigg(
           [u]^2_{H^1(\pa ' Q^+_{R} )}+\frac{1}{R}[u]^2_{H^1(Q^+_{R} )} + \| f\|^2_{C^{\alpha}(\pa ' Q^+_{R})}  
            +  \|u\|^2_{C^{\alpha}(\pa ' Q^+_{R})}   \bigg).
$$
Theorem \ref{thm:estimate campanto norm} is proved according to the Campanato theory.

\end{proof}
In addition, the above arguments also work for the  linear equation localized from equation \eqref{eq:nonlinear boundary diffusion}, which is written as
\begin{equation}\label{eq:boundary diffusion local standard-2}
 \left\{
\begin{aligned}
  \pa_i(a_{ij}(x,t)\pa_j u)  &=   0  \quad  \mbox{in }    Q^+_R= B^+_R \times I_R,     \\
   a(x,t)\partial_{t}u-  \tilde{\varphi}(x)  a_{nj}(x,t)\pa_ju + bu &=  f \quad  \mbox{on }  \pa ' Q^+_R =  \pa ' B^+_R \times I_R.
\end{aligned}
\right.
\end{equation}
The Schauder estimates of such type of boundary diffusion equation are obtained as well.

\begin{thm}\label{thm:schauder estimate local general-2}
If $u \in C^{1+\alpha}( \pa ' Q^+_R ) \bigcap C^{1+\alpha}(  Q^+_R  ) $ is a CIE solution to \eqref{eq:boundary diffusion local standard-2}  with coefficients satisfying \eqref{eq:bound of coefficients-local}, then we have Schauder type estimate
\be\label{eq:schauder estimate local general-2}
\begin{split}
&[u]^2_{C^{1+\alpha}(\pa ' Q^+_{R/2} )} + [u]^2_{C^{1+\alpha}( Q^+_{R/2} )} \\
&\le C(n,\Lambda,\alpha)\left( \|u\|^2_{H^1(\pa ' Q^+_R )}+\|u\|^2_{H^1( Q^+_R )} + \| u \|^2_{ C^{\alpha}(\pa ' Q^+_R )} + \| f \|^2_{ C^{\alpha}(\pa ' Q^+_R )}\right).
\end{split}
\ee

\end{thm}

\section{Applications } \label{sec:application}

This section exhibits several typical applications to the theory of Schauder estimates in Theorems \ref{thm:schauder estimate local general} and \ref{thm:schauder estimate global}.
As a direct application, the $C^{1+\alpha}$-type estimates for positive classical CIE solution of non-linear boundary diffusion equation \eqref{eq:nonlinear boundary diffusion} are generated.
\begin{thm}\label{thm:nonlinear boundary diffusion schauder estimate global}
If $u \in C^{1+\alpha}(\Omega \times (0,T)) \cap  C^{1+\alpha}(\pa \Omega\times (0,T))$ is a positive CIE solution to \eqref{eq:nonlinear boundary diffusion} for every local equation in $Q_R^+(x,t), (x,t) \in \pa \om \times (0,T)$, and it satisfies \eqref{eq:positive bounds} uniformly, then we have Schauder type estimate
\be\label{eq:nonlinear boundary diffusion schauder estimate global}
\begin{split}
&[u]^2_{C^{1+\alpha}(\pa \Omega \times (0,T) )}+[u]^2_{C^{1+\alpha}( \Omega \times (0,T) )} \\
&\le C(n,\Lambda,\alpha,p)\left( \|u\|^2_{H^1(\pa \Omega \times (0,T) )}+\|u\|^2_{H^1(\Omega \times (0,T)  )} + \| u \|^2_{ C^{\alpha}( \pa \Omega \times (0,T) )}  \right).
\end{split}
\ee

\end{thm}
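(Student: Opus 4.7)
The plan is to reduce the nonlinear problem to the linear Schauder estimate of Theorem \ref{thm:schauder estimate global} via the substitution $v=u^p$ already recorded in the introduction. Under this substitution, setting $a=\tfrac{1}{p}v^{(1-p)/p}=\tfrac{1}{p}u^{1-p}$ and $f\equiv 0$, one directly checks that $\mathrm{div}(a\nabla v)=\Delta u=0$ in $\Omega$, and the boundary condition $\partial_{t}u^{p}=-\partial_{\nu}u-bu^{p}$ rewrites as $\partial_{t}v+a\partial_{\nu}v+bv=0$ on $\partial\Omega$, so $v$ solves \eqref{eq:boundary diffusion}. The compatibility \eqref{eq:compatibility} for $v$ on each local $Q_R^+(x,t)$ is precisely the hypothesis assumed on $u$, since $u$ and $v$ differ by a smooth map with non-vanishing derivative on $[c_0,C_0]$.

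Next I would verify that the coefficients for $v$ satisfy \eqref{eq:bound of coefficients} with some $\Lambda=\Lambda(c_0,C_0,p,|u|_{C^{\alpha}})$. The pointwise bound $c_0\le u\le C_0$ yields $\Lambda^{-1}\le a\le\Lambda$ with $\Lambda$ depending only on $c_0,C_0,p$. The Hölder control $|a|_{C^{\alpha}}\le C(c_0,C_0,p)\,|u|_{C^{\alpha}}$ follows from the smoothness of $s\mapsto s^{1-p}$ on $[c_0,C_0]$. The assumptions on $b$ and on $\partial\Omega$ are inherited from the hypotheses of the nonlinear problem. Theorem \ref{thm:schauder estimate global} then delivers
\begin{equation*}
[v]^2_{C^{1+\alpha}(\partial\Omega\times(0,T))}+[v]^2_{C^{1+\alpha}(\Omega\times(0,T))}\le C\bigl(\|v\|^2_{H^{1}(\partial\Omega\times(0,T))}+\|v\|^2_{H^{1}(\Omega\times(0,T))}+\|v\|^2_{C^{\alpha}(\partial\Omega\times(0,T))}\bigr),
\end{equation*}
where $C=C(n,\Lambda,\alpha)$ and the $f$-term vanishes.

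Finally, I would transfer the estimate from $v$ back to $u$. Since $v\in[c_0^{p},C_0^{p}]$, the map $\Psi(s)=s^{1/p}$ is $C^{\infty}$ with bounded derivatives on this interval, so composing with $\Psi$ transfers $C^{\alpha}$, $H^{1}$, and $C^{1+\alpha}$ norms between $u$ and $v$ with constants depending only on $c_0,C_0,p$. Concretely, $\partial_{j}u=\tfrac{1}{p}v^{(1-p)/p}\partial_{j}v$ and $\partial_{t}u=\tfrac{1}{p}v^{(1-p)/p}\partial_{t}v$, so $[\partial_{j}u]_{C^{\alpha}(\partial\Omega\times(0,T))}$ is bounded by a product of $[v^{(1-p)/p}]_{C^{\alpha}}$ (controlled by $[v]_{C^{\alpha}}$, hence by $|u|_{C^{\alpha}}$) and $[\partial_{j}v]_{C^{\alpha}}$. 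Combining these with $\|v\|_{H^{1}}\le C(c_0,C_0,p)\|u\|_{H^{1}}$ and $\|v\|_{C^{\alpha}}\le C(c_0,C_0,p)\|u\|_{C^{\alpha}}$ absorbs all right-hand side norms into the stated bound.

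The only real subtlety is the bookkeeping in this last step: one has to check that no higher norm (e.g.\ $\|u\|_{C^{1+\alpha}}$ on the right) creeps in when applying the chain rule to transfer the Hölder seminorm of $\nabla u$ to that of $\nabla v$. This is harmless because the factor $v^{(1-p)/p}$ is controlled in $C^{\alpha}$ by $|u|_{C^{\alpha}}$ and the lower bound $c_0$ alone, so the right-hand side structure of \eqref{eq:schauder estimate global} is preserved exactly. This yields \eqref{eq:nonlinear boundary diffusion schauder estimate global} with $C$ depending on $n,\Lambda,\alpha,p$ as claimed.
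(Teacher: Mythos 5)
Your reduction to Theorem \ref{thm:schauder estimate global} via $v=u^p$, $a=\tfrac{1}{p}u^{1-p}$, $f\equiv 0$ is exactly what the paper has in mind: the paper calls Theorem \ref{thm:nonlinear boundary diffusion schauder estimate global} a ``direct application'' of Theorem \ref{thm:schauder estimate global} and gives no separate argument, so your substitution, coefficient verification, and back-transfer constitute precisely the intended proof.

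One small imprecision in your last paragraph: you assert that ``no higher norm creeps in'' because $v^{(1-p)/p}$ is controlled in $C^{\alpha}$, but the product rule for H\"older seminorms reads $[h(v)\,\partial_j v]_{C^{\alpha}}\le [h(v)]_{C^{\alpha}}\,\|\partial_j v\|_{L^{\infty}}+\|h(v)\|_{L^{\infty}}\,[\partial_j v]_{C^{\alpha}}$, and the factor $\|\partial_j v\|_{L^{\infty}}$ is a $C^1$-type quantity, not controlled by $C^{\alpha}$ norms of $v$ alone. This does not break the argument: $\|\partial_j v\|_{L^{\infty}}$ is dominated by $[v]_{C^{1+\alpha}}$ plus $\|v\|_{C^{\alpha}}$ via interpolation, and $[v]_{C^{1+\alpha}}$ is precisely what Theorem \ref{thm:schauder estimate global} bounds. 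After that insertion, the cross term $[v]_{C^{\alpha}}[v]_{C^{1+\alpha}}$ is absorbed because the constant is permitted (through $\Lambda$, which you correctly noted depends on $\|u\|_{C^{\alpha}}$) to depend on $\|u\|_{C^{\alpha}}$. It would be cleaner to state this interpolation/absorption explicitly rather than to claim no higher norm appears, but the conclusion and the structure of \eqref{eq:nonlinear boundary diffusion schauder estimate global} are correct.
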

\begin{rem}\label{rem:potive upper and lower bounds}
The assumption that the positive solution $u$ of non-linear diffusion problem \eqref{eq:nonlinear boundary diffusion} is bounded above and below uniformly, was essentially used to prove the short time existence result, see \cite{brendle2002generalization} and \cite{2024JXY}. Therefore we derive Schauder type a prior estimate \eqref{eq:nonlinear boundary diffusion schauder estimate global} in Theorem \ref{thm:nonlinear boundary diffusion schauder estimate global} under positive upper and lower bounds \eqref{eq:positive bounds}.

\end{rem}
With the Schauder type estimates \eqref{eq:schauder estimate global} of the linear equation \eqref{eq:boundary diffusion}, we get the existence result of $C^{1+\alpha}$ type CIE solution.
\begin{thm}\label{thm: boundary diffusion non-homogeneous existence}
The linear boundary diffusion equation \eqref{eq:boundary diffusion} with initial condition
\be \label{eq: boundary diffusion initial condition}
 v  =   0\quad  \mbox{on }     \partial \Omega\times \{ t=0 \}
\ee
and the coefficients satisfying \eqref{eq:bound of coefficients},
has a unique CIE solution 
$v \in C^{1+\alpha}(\Omega \times (0,T)) \cap  C^{1+\alpha}(\pa \Omega\times (0,T))$.

\end{thm}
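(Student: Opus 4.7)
The plan is to settle uniqueness by a direct energy estimate and to obtain existence via the continuity method with Theorem~\ref{thm:schauder estimate global} supplying the crucial a priori bound.

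For uniqueness, let $w := v_1 - v_2$ be the difference of two solutions; then $w$ satisfies the homogeneous equation ($f\equiv 0$) with $w(\cdot,0)\equiv 0$. Multiplying $\mathrm{div}(a\nabla w)=0$ by $w$ and integrating over $\Omega$ yields, via the divergence theorem and the boundary equation,
\begin{equation*}
\frac{1}{2}\frac{d}{dt}\int_{\partial\Omega} w^2\,dS + \int_{\Omega} a|\nabla w|^2\,dx = -\int_{\partial\Omega} bw^2\,dS \le \Lambda\int_{\partial\Omega} w^2\,dS.
\end{equation*}
Gr\"onwall's inequality combined with $w(\cdot,0)=0$ forces $w\equiv 0$ on $\partial\Omega\times(0,T)$, and then $w\equiv 0$ throughout $\Omega$ by harmonicity of $w(\cdot,t)$ with zero trace.

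For existence, I would introduce the homotopy
\begin{equation*}
L_\sigma v := \partial_t v + \bigl((1-\sigma)+\sigma a\bigr)\partial_\nu v + \sigma b v \quad \text{on } \partial\Omega\times(0,T),\quad \sigma\in[0,1],
\end{equation*}
coupled with $\mathrm{div}\bigl(((1-\sigma)+\sigma a)\nabla v\bigr)=0$ in $\Omega$ and $v|_{t=0}=0$. At $\sigma=0$ the problem reduces to the Dirichlet-to-Neumann heat flow $\partial_t v + \mathscr{B}v = g$ on $\partial\Omega$, whose solvability in $C^{1+\alpha}$ for $g\in C^\alpha$ is classical (cf.~\cite{ChangG,brendle2002generalization,2024JXY}), and at $\sigma=1$ we recover the desired equation. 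Combining Theorem~\ref{thm:schauder estimate global} with standard energy estimates (testing with $v_\sigma$, using uniform ellipticity $a_\sigma\ge 1/\Lambda$) and the De~Giorgi--Nash--Moser iteration for the very weak formulation of Section~\ref{sec:weak solution} (to control $\|v_\sigma\|_{C^\alpha}$ and $\|v_\sigma\|_{H^1}$ in terms of $\|f\|_{C^\alpha}$) yields a $\sigma$-independent a priori bound
\begin{equation*}
\|v_\sigma\|_{C^{1+\alpha}(\Omega\times(0,T))} + \|v_\sigma\|_{C^{1+\alpha}(\partial\Omega\times(0,T))} \le C\,\|f\|_{C^\alpha(\partial\Omega\times(0,T))}.
\end{equation*}
The standard continuity-method argument then shows that the set $S\subset[0,1]$ of parameters for which $L_\sigma$ is solvable is both open (from the open-mapping theorem applied to the already-invertible $L_{\sigma_0}$, combined with the bounded bilinear perturbation $\sigma\mapsto L_\sigma$) and closed (from the uniform bound above together with Arzel\`a--Ascoli, which lets one extract a convergent subsequence whose limit solves the limiting equation). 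Since $0\in S$, we conclude $1\in S$.

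The main obstacle is to verify that the compatibility condition \eqref{eq:compatibility} is maintained uniformly along the homotopy so that Theorem~\ref{thm:schauder estimate global} genuinely applies at every $\sigma$. Since $a_\sigma$ enjoys uniform ellipticity and uniform $C^\alpha$ bounds in $\sigma$, one can differentiate $\mathrm{div}(a_\sigma\nabla v_\sigma)=0$ in $t$ and exploit linearity to represent $\partial_t v_\sigma$ in $\Omega$ as the $a_\sigma$-harmonic extension of its boundary trace; this controls $\int_{Q_R^+}|\partial_t v_\sigma|^2$ by the boundary energy and hence by $\|f\|_{C^\alpha}$, uniformly in $\sigma$. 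Once this uniform interior compatibility is established, the Schauder estimate \eqref{eq:schauder estimate global} applies with $\sigma$-independent constants and the continuity-method chain closes, delivering the $C^{1+\alpha}$ solution; uniqueness in this class was shown above.
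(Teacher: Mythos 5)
Your existence argument is essentially the paper's: both set up a one-parameter homotopy interpolating from a solvable model problem to the target equation and close the continuity method using the global Schauder estimate of Theorem~\ref{thm:schauder estimate global}. (You also homotope $b$ to zero, whereas the paper keeps $bv$ fixed in $L_\lambda v=(1-\lambda)L_0 v+\lambda L_1 v$; this is an inessential variant.) Where you genuinely diverge from the paper is uniqueness: you give a self-contained energy proof --- multiply $\mathrm{div}(a\nabla w)=0$ by $w$, use the boundary condition to convert $\int_{\partial\Omega}aw\,\partial_\nu w$ into $-\tfrac12\tfrac{d}{dt}\int_{\partial\Omega}w^2-\int_{\partial\Omega}bw^2$, and run Gr\"onwall from $w(\cdot,0)=0$ --- whereas the paper simply invokes the comparison principle via Proposition~3.3 of \cite{2024JXY}. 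Your energy route is more elementary, makes the role of the zero initial data explicit, and avoids needing a maximum principle in this mixed Steklov-type setting. One caution: both your proof and the paper's are brisk about whether the compatibility condition~\eqref{eq:compatibility} persists \emph{uniformly} along the homotopy, which is needed for the constant in the Schauder estimate to be $\sigma$-independent. Your idea of differentiating $\mathrm{div}(a_\sigma\nabla v_\sigma)=0$ in $t$ goes in the right direction, but as written it is a sketch: when $a_\sigma$ depends on $t$, the differentiated equation for $\partial_t v_\sigma$ carries the extra term $\mathrm{div}\bigl((\partial_t a_\sigma)\nabla v_\sigma\bigr)$, so $\partial_t v_\sigma$ is not simply the $a_\sigma$-harmonic extension of its boundary trace, and the claimed control of $\int_{Q^+_R}|\partial_t v_\sigma|^2$ would need a further estimate of this source term.
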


\begin{proof}
Taking 
$$
L_0 v= \pa_t v + \pa_\nu v + bv,
$$
$$
L_1 v = \pa_t v + a(x,t)\pa_\nu v + bv,
$$
subsequently, we look into equations of family $\lambda \in [0,1]$
\begin{equation}\label{eq:boundary diffusion of a class lambda}
 \left\{
\begin{aligned}
  \mbox{div}(  [(1-\lambda) + \lambda  a    ]     \nabla v)  &=   0  \quad  \mbox{in }     \Omega\times (0,T),     \\
   L_\lambda v  &=  f    \quad  \mbox{on }     \partial \Omega\times (0,T),
\end{aligned}
\right.
\end{equation}
where $L_\lambda v:=(1-\lambda)L_0 v  +\lambda L_1 v$. Then from Theorem \ref{thm:schauder estimate global}, the existence is proved by the standard continuity method. The uniqueness is proved by the Comparison principle, see Proposition \textcolor{blue}{3.3} \cite{2024JXY}.
\end{proof}
 
Similarly, we have the existence result for a related linear equation as follow.
 
 \begin{thm}\label{thm:linear boundary diffusion non-homogeneous existence-u}
The linear boundary diffusion problem
\begin{equation}\label{eq:linear boundary diffusion non-homogeneous-u}
 \left\{
\begin{aligned}
  \Delta u  &=   0  \quad  \mbox{in }     \Omega\times (0,\infty),     \\
   a(x,t)\partial_{t}u + \partial_{\nu}u +  b u  &=    f   \quad  \mbox{on }     \partial \Omega\times (0,\infty),\\
   u  &=    0 \quad  \mbox{on }     \partial \Omega\times \{ t=0 \},
\end{aligned}
\right.
\end{equation}
with the coefficients' conditions \eqref{eq:bound of coefficients},
has a unique CIE solution 
$u \in C^{1+\alpha}(\Omega \times (0,T)) \cap  C^{1+\alpha}(\pa \Omega\times (0,T))$.

\end{thm}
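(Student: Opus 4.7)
The plan is to mimic the continuity method used in Theorem~\ref{thm: boundary diffusion non-homogeneous existence}, repositioned so that the coefficient $a(x,t)$ multiplies $\partial_t u$ rather than $\partial_\nu u$. Set $L_0 u := \partial_t u + \partial_\nu u + bu$ and $L_1 u := a\partial_t u + \partial_\nu u + bu$, and for $\lambda \in [0,1]$ consider the family
\begin{equation*}
\left\{
\begin{aligned}
\Delta u &= 0 \quad \text{in } \Omega \times (0,T), \\
L_\lambda u := [(1-\lambda) + \lambda a]\partial_t u + \partial_\nu u + bu &= f \quad \text{on } \partial\Omega \times (0,T), \\
u|_{t=0} &= 0.
\end{aligned}
\right.
\end{equation*}
Because $1/\Lambda \le a \le \Lambda$, the coefficient $a_\lambda := (1-\lambda) + \lambda a$ lies in a fixed positive interval with $C^\alpha$ norm bounded uniformly in $\lambda$ in terms of $\Lambda$. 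At $\lambda = 0$ the system reduces to $\Delta u = 0$ in $\Omega$ with $\partial_t u + \partial_\nu u + bu = f$ on $\partial \Omega$, which is the $a \equiv 1$ special case of \eqref{eq:boundary diffusion}, hence solvable in $C^{1+\alpha}(\Omega \times (0,T)) \cap C^{1+\alpha}(\partial\Omega \times (0,T))$ by Theorem~\ref{thm: boundary diffusion non-homogeneous existence}.

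Next I would globalize the local Schauder estimate of Theorem~\ref{thm:schauder estimate local general-2} by the boundary-flattening and finite-covering procedure that upgrades Theorem~\ref{thm:schauder estimate local general} to Theorem~\ref{thm:schauder estimate global}. Applied to each $L_\lambda$, and combined with a preliminary $L^\infty$ and $H^1$ bound supplied by the compatible energy condition \eqref{eq:compatibility} together with Moser-type iteration as in \cite{2024JXY}, this yields an a priori bound of the form
\[
[u]^2_{C^{1+\alpha}(\bar\Omega \times [0,T])} + [u]^2_{C^{1+\alpha}(\partial\Omega \times [0,T])} \le C\,\|f\|^2_{C^{\alpha}(\partial\Omega \times [0,T])},
\]
with $C = C(n,\Lambda,\alpha,T)$ independent of $\lambda$, since the bounds in \eqref{eq:bound of coefficients-local} remain uniform when $a$ is replaced by $a_\lambda$.

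Finally, I would run the classical continuity method in the Banach space
$X := \{u \in C^{1+\alpha}(\bar{\Omega}\times[0,T]) \cap C^{1+\alpha}(\partial\Omega\times[0,T]) : u|_{t=0}=0\}$
with $L_\lambda : X \to Y := C^{\alpha}(\partial\Omega \times [0,T])$. The solvability set $S \subset [0,1]$ contains $\lambda=0$ by the first step, is open by a bounded-inverse argument using continuity of $\lambda \mapsto L_\lambda$ in operator norm, and is closed by the uniform Schauder estimate above combined with Arzel\`a--Ascoli. Hence $S = [0,1]$ and existence at $\lambda = 1$ follows. Uniqueness is an immediate consequence of the comparison principle applied to the difference of two solutions, precisely in the spirit of Proposition~3.3 of \cite{2024JXY}.

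The main obstacle is checking that the compatible energy condition \eqref{eq:compatibility} propagates uniformly through every flattening chart $Q^+_R(x,t)$ and for every $a_\lambda$; this is ultimately bookkeeping, because Remark~\ref{rem:localize problem general case} guarantees that the local Schauder estimate is insensitive to whether $a$ multiplies $\partial_t u$ or $a_{nj}\partial_j u$, and the constants in \eqref{eq:schauder estimate local general-2} depend on $\Lambda$ only through the quantitative bounds \eqref{eq:bound of coefficients-local}, which hold for $a_\lambda$ with constants depending only on $\Lambda$.
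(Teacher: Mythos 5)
Your proposal is correct and matches the paper's intended argument: the paper gives no explicit proof of Theorem~\ref{thm:linear boundary diffusion non-homogeneous existence-u}, merely declaring it ``similar'' to Theorem~\ref{thm: boundary diffusion non-homogeneous existence}, and your continuity family $L_\lambda u = [(1-\lambda)+\lambda a]\partial_t u + \partial_\nu u + bu$ together with the Schauder estimate of Theorem~\ref{thm:schauder estimate local general-2} (globalized as in Theorem~\ref{thm:schauder estimate global}) is exactly the analogue the paper has in mind. You are in fact more careful than the paper, since you note that the right-hand sides of the Schauder estimates involve $H^1$ and $C^\alpha$ norms of $u$, which must first be controlled (via the compatible energy condition and a Moser-type argument) before the continuity method closes.
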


Furthermore, by the Schauder type estimate of the linear equation \eqref{eq:boundary diffusion}, we also obtain short time existence of $C^{1+\alpha}$-type CIE solution for the non-linear boundary diffusion problem which  generates the problem \eqref{eq:nonlinear boundary diffusion}-\eqref{eq: boundary diffusion initial condition u_0} by scaling on the time variable, see \textcolor{blue}{(13)-(14)} of  \cite{2024JXY}, with non-homogeneous boundary term $f$. It is formulated as
\begin{equation}\label{eq:nonlinear boundary diffusion non-homogeneous-u}
 \left\{
\begin{aligned}
  \Delta u  &=   0  \quad  \mbox{in }     \Omega\times (0,\infty),     \\
   \partial_{t}u^{p}&=   -  \partial_{\nu}u  -  b u +f   \quad  \mbox{on }     \partial \Omega\times (0,\infty),\\
   u  &=   u_0 > 0 \quad  \mbox{on }     \partial \Omega\times \{ t=0 \},
\end{aligned}
\right.
\end{equation}
where $u_0 \in C^{1+\alpha}(\pa \om)$. $b(x,t)$ and $f(x,t)$ are given as in \eqref{eq:boundary diffusion}.
\begin{thm}\label{thm:nonlinear boundary diffusion non-homogeneous short time existence }
The non-linear boundary diffusion problem \eqref{eq:nonlinear boundary diffusion non-homogeneous-u} has a unique CIE solution 
$u\in C^{1+\alpha}(\Omega \times (0,T)) \cap  C^{1+\alpha}(\pa \Omega\times (0,T))$ on a small time interval $(0,T)$.

\end{thm}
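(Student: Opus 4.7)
The plan is to reduce \eqref{eq:nonlinear boundary diffusion non-homogeneous-u} to a repeated application of the linear existence result Theorem~\ref{thm:linear boundary diffusion non-homogeneous existence-u} by a fixed point argument performed on a short time interval. First I would eliminate the non-zero initial condition: let $U_0\in C^{1+\alpha}(\bar\Omega)$ denote the harmonic extension of $u_0$, i.e.\ $\Delta U_0=0$ in $\Omega$, $U_0=u_0$ on $\pa\Omega$; the strong maximum principle combined with $u_0>0$ gives $U_0\geq c_0>0$ on $\bar\Omega$. Writing $u=U_0+w$ and using $\pa_t u^p=pu^{p-1}\pa_t u$, the unknown $w$ must satisfy $\Delta w=0$ in $\Omega\times(0,T)$, $w(\cdot,0)\equiv 0$ on $\pa\Omega$, and the quasilinear boundary condition
\[
p(w+U_0)^{p-1}\,\pa_t w+\pa_\nu w+bw=\tilde f\quad\text{on } \pa\Omega\times(0,T),
\]
where $\tilde f:=f-\pa_\nu U_0-bU_0\in C^\alpha(\pa\Omega\times(0,T))$.

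Next I would set up the fixed point map. Fix small parameters $\delta,T>0$ to be chosen, and consider the closed convex set
\[
K_{\delta,T}:=\bigl\{\,v\in C^\alpha(\bar\Omega\times[0,T])\cap C^\alpha(\pa\Omega\times[0,T]) \ :\ v(\cdot,0)\equiv 0,\ \|v\|_{C^\alpha}\leq\delta\,\bigr\}.
\]
For $\delta<c_0/2$ the coefficient $a_v(x,t):=p(v(x,t)+U_0(x))^{p-1}$ is uniformly bounded above and below and lies in $C^\alpha$, hence satisfies \eqref{eq:bound of coefficients} for some $\Lambda=\Lambda(c_0,\|u_0\|_{C^{1+\alpha}},p)$. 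Define $\mathcal T\colon K_{\delta,T}\to C^{1+\alpha}$ by $\mathcal T(v)=w$, where $w$ is the unique solution to \eqref{eq:linear boundary diffusion non-homogeneous-u} with coefficient $a_v$, source $\tilde f$ and zero initial data furnished by Theorem~\ref{thm:linear boundary diffusion non-homogeneous existence-u}. The global Schauder estimate \eqref{eq:schauder estimate global} gives a uniform $C^{1+\alpha}$ bound on $\mathcal T(v)$ depending only on $\|\tilde f\|_{C^\alpha}$, $c_0$ and $\Lambda$; continuity in the $C^\alpha$ topology follows from continuous dependence of the linear problem on its coefficient, and the compact embedding $C^{1+\alpha}\hookrightarrow C^\alpha$ then makes $\mathcal T$ a compact operator.

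To close the self-mapping property, observe that any $\mathcal T(v)$ vanishes at $t=0$, so for $w=\mathcal T(v)$ one has $\|w\|_{L^\infty}\leq T\|\pa_t w\|_{L^\infty}\leq CT\|w\|_{C^{1+\alpha}}$, and an analogous time-interpolation controls $\|w\|_{C^\alpha}$ by a positive power of $T$. Thus, shrinking $T$ enough forces $\|\mathcal T(v)\|_{C^\alpha}\leq\delta$, so $\mathcal T(K_{\delta,T})\subset K_{\delta,T}$. Schauder's fixed point theorem produces $w\in K_{\delta,T}$ with $\mathcal T(w)=w$, and $u:=w+U_0\in C^{1+\alpha}(\Omega\times(0,T))\cap C^{1+\alpha}(\pa\Omega\times(0,T))$ solves \eqref{eq:nonlinear boundary diffusion non-homogeneous-u}; positivity of $u$ on the short interval follows from $\|w\|_{L^\infty}\leq\delta<c_0/2\leq U_0/2$. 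Uniqueness is obtained by inserting any two such positive solutions into the comparison principle stated as Proposition~3.3 of \cite{2024JXY}.

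The principal obstacle I anticipate is the propagation of the compatibility condition \eqref{eq:compatibility}: Theorem~\ref{thm:linear boundary diffusion non-homogeneous existence-u} assumes it for every local chart, so one must verify that the solutions produced by the linear solver at each fixed-point iterate, and ultimately the fixed point itself, inherit \eqref{eq:compatibility} (which here reduces to checking the interior energy bound on $\pa_t w$ obtained from multiplying the linear equation by a suitable time-derivative test function against the harmonic structure inside $\Omega$). A secondary subtlety is calibrating $\delta$ and $T$ simultaneously so that the time-interpolation gain beats the time-independent data contribution $\|\tilde f\|_{C^\alpha}$; this is precisely why the zero-initial-data reduction in the first step is essential.
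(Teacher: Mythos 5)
Your proposal is essentially correct but takes a genuinely different fixed-point route from the paper's. The paper performs the same harmonic-extension reduction $u=U_0+w$, but then embeds the linearized problem into a one-parameter family $L_\sigma$, $\sigma\in[0,1]$, and invokes the Leray--Schauder fixed point theorem; the crucial \emph{a priori bound} property (their item (c)) is discharged by \emph{assuming} outright a uniform H\"older estimate $\|u\|_{C^\alpha(\bar\Omega\times(0,T))}+\|u\|_{C^\alpha(\pa\Omega\times(0,T))}\le M$, a hypothesis the paper itself acknowledges (Remark~\ref{rem:assumption of Holder estimate}) is only available through Harnack-type inequalities in restricted ranges of $p$. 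You instead use the plain Schauder fixed point theorem on the small ball $K_{\delta,T}$, closing the self-mapping by exploiting $w(\cdot,0)=0$ and shrinking $T$. This is cleaner in that it avoids importing an a priori H\"older estimate for the nonlinear solution; both proofs shrink $T$ anyway to keep $p(w+U_0)^{p-1}$ uniformly elliptic and $u$ positive.

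One presentation gap you should fix: the sentence ``the global Schauder estimate \eqref{eq:schauder estimate global} gives a uniform $C^{1+\alpha}$ bound on $\mathcal T(v)$ depending only on $\|\tilde f\|_{C^\alpha}$, $c_0$ and $\Lambda$'' is not literally true, because the right-hand side of \eqref{eq:schauder estimate global} contains $\|\mathcal T(v)\|_{H^1}$ and $\|\mathcal T(v)\|_{C^\alpha}$. The genuine chain is: apply \eqref{eq:schauder estimate global}, then use $w(\cdot,0)=0$ together with the interpolation $\|w\|_{C^\alpha}\lesssim T^{1-\alpha}\|w\|_{C^{1+\alpha}}$ and $\|w\|_{H^1}^2\lesssim T\|w\|_{C^{1+\alpha}}^2$, absorb the $w$-terms for small $T$ to get $\|w\|_{C^{1+\alpha}}\lesssim\|\tilde f\|_{C^\alpha}$, and \emph{only then} conclude $\|w\|_{C^\alpha}\lesssim T^{1-\alpha}\|\tilde f\|_{C^\alpha}\le\delta$. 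You gesture at this with the closing ``subtlety'' remark but currently state the bound before the absorption that justifies it. Your concern about propagating the compatibility condition \eqref{eq:compatibility} through the iteration is legitimate, but the paper handles it the same way — it is built into the hypotheses of Theorem~\ref{thm:linear boundary diffusion non-homogeneous existence-u} and of Theorem~\ref{thm:nonlinear boundary diffusion non-homogeneous short time existence } itself rather than being verified, so it is a shared caveat, not a defect specific to your approach.
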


\begin{proof}
To employ the Leray-Schauder theory, we assume  
$$\|u\|_{  C^{\alpha}(\bar{\om}\times (0,T)} + \|u\|_{  C^{\alpha}(\pa {\om}\times (0,T)} \le M,
$$
for some  constant $M>0$. In terms of the initial value $u_0 $, since $\pa \om$ is compact, one may assume
$$
\frac{2}{\Lambda} \le u_0 \le \frac{\Lambda}{2},
$$
where $\Lambda >>1$ is a positive constant. For $u_0 \in C^{1+\alpha}(\pa \om)$, there is an extension $U_0 \in C^{3}({ \om})\cap C^{1+\alpha}{(\pa \Omega)}$ such that 
\begin{equation}\label{eq:estension of initial value u_0}
 \left\{
\begin{aligned}
  \Delta U_0  &=   0  \quad  \mbox{in }     \Omega,    \\
   U_0  &=  u_0    \quad  \mbox{on }     \partial \Omega,
\end{aligned}
\right.
\end{equation}
then we build an equation 
\begin{equation}\label{eq:nonlinear boundary diffusion non-homogeneous-linearized}
 \left\{
\begin{aligned}
  \Delta \psi  &=   0  \quad  \mbox{in }     \Omega\times (0,\infty),     \\
   p(w+U_0)^{p-1}\partial_{t}\psi&=   -  \partial_{\nu}\psi -b \psi + \left(f - \partial_{\nu}U_0 -  b U_0 \right)   \quad  \mbox{on }     \partial \Omega\times (0,\infty),\\
   \psi  &=    0 \quad  \mbox{on }     \partial \Omega\times \{ t=0 \}.
\end{aligned}
\right.
\end{equation}
Denoting spaces
\be \label{eq:X space}
 X:=\left\{ w\in C^{\alpha} (\om \times (0,T) ) \cap C^{\alpha} (\pa \om \times (0,T)) \mid   w(x,0)=0    \right\},
\ee
and
\be \label{eq:Y space}
 Y:=\left\{ \psi\in C^{1+\alpha}(\Omega \times (0,T)) \cap  C^{1+\alpha}(\pa {\Omega} \times (0,T))  \mid  \psi(x,0)  = 0    \right\},
\ee
then for any $w \in X$,   
$$
\frac{1}{\Lambda} \le w+U_0 \le \Lambda,
$$
provided $T>0$ sufficiently small, such that
 $$
 | w(x,t) | \le \|w\|_{    C^{\alpha} (\bar{\om}\times (0,T) )   } T^{\alpha}  \le \frac{1}{2\Lambda} . 
 $$ 
Hence one finds a unique solution $\psi\in C^{1+\alpha}(\Omega \times (0,T)) \cap  C^{1+\alpha}(\pa \Omega\times (0,T))$ of problem \eqref{eq:nonlinear boundary diffusion non-homogeneous-linearized} by employing Theorem \ref{thm:linear boundary diffusion non-homogeneous existence-u}. 
Subsequently, it is proper to define a map
\begin{equation} \label{eq:map F}
  \begin{split}
         \mathsf{F}:X & \longrightarrow Y ,\\
                    w & \longmapsto \mathsf{F}(w)=\psi.
 \end{split}
\end{equation}
Here, it presents that $\phi:=\psi+U_0 \in C^{1+\alpha}(\Omega \times (0,T)) \cap  C^{1+\alpha}(\pa \Omega\times (0,T))$ satisfies
\begin{equation}\label{eq:nonlinear boundary diffusion non-homogeneous-linearized-with initial value}
 \left\{
\begin{aligned}
  \Delta \phi  &=   0  \quad  \mbox{in }     \Omega\times (0,\infty),     \\
   p(w+U_0)^{p-1}\partial_{t}\phi&=   -  \partial_{\nu}\phi -  b \phi  +f   \quad  \mbox{on }     \partial \Omega\times (0,\infty),\\
   \phi  &=    u_0  \quad  \mbox{on }     \partial \Omega\times \{ t=0 \}.
\end{aligned}
\right.
\end{equation}
In order to find a fixed point of $\mathsf{F}$, we embed \eqref{eq:nonlinear boundary diffusion non-homogeneous-linearized} into a family of problems
\begin{equation}\label{eq:nonlinear boundary diffusion non-homogeneous-linearized-sigma family}
 \left\{
\begin{aligned}
  \Delta \psi  &=   0  \quad  \mbox{in }     \Omega\times (0,\infty),     \\
\left(   \sigma p(w+U_0)^{p-1}\ +(1-\sigma) \right)  \partial_{t}\psi&=   -  \partial_{\nu}\psi  -\sigma \left( \partial_{\nu}U_0 + b (\psi +U_0)  -f \right)  \quad  \mbox{on }     \partial \Omega\times (0,\infty),\\
   \psi  &=    0 \quad  \mbox{on }     \partial \Omega\times \{ t=0 \},
\end{aligned}
\right.
\end{equation}
with $\sigma \in [0,1]$. The family \eqref{eq:nonlinear boundary diffusion non-homogeneous-linearized-sigma family} keeps the structure
$$
\frac{1}{\Lambda}  \le \sigma p(w+U_0)^{p-1}\ +(1-\sigma) \le \Lambda,
$$ 
uniformly for all $\sigma \in [0,1] $ provided $T>0$  small enough. One finds a unique solution $\psi_\sigma \in C^{1+\alpha}(\Omega \times (0,T)) \cap  C^{1+\alpha}(\pa \Omega\times (0,T))$ of problem \eqref{eq:nonlinear boundary diffusion non-homogeneous-linearized-sigma family} for every  $\sigma \in [0,1]$, and another map is defined as follow
 \begin{equation} \label{eq:map T}
  \begin{split}
         \mathsf{T}:X\times [0,1] & \longrightarrow Y ,\\
                    (w,\sigma) & \longmapsto \mathsf{T}(w,\sigma) = \psi_\sigma.
 \end{split}
\end{equation}
Such map $\mathsf{T}$ possesses the following properties:
\begin{enumerate}
\item[(a).]  $\mathsf{T}$ is compact, by that $Y\subset \subset X$. 
\item [(b).]  $\mathsf{T}(w,0)=0, \quad \forall w \in X$, since the problem
  \begin{equation}\label{eq:nonlinear boundary diffusion non-homogeneous-linearized-sigma family-zero}
 \left\{
\begin{aligned}
  \Delta \psi  &=   0  \quad  \mbox{in }     \Omega\times (0,\infty),     \\
  \partial_{t}\psi&=   -  \partial_{\nu}\psi    \quad  \mbox{on }     \partial \Omega\times (0,\infty),\\
   \psi  &=    0 \quad  \mbox{on }     \partial \Omega\times \{ t=0 \},
\end{aligned}
\right.
\end{equation}
only admits trivial solution.
\item [(c).] For any possible fixed point of $\mathsf{T}$, the $X$ norm of such fixed point is uniformly bounded. i.e., there is a positive constant $M$ such that, if $w \in X$ satisfies $w=T(w,\sigma)$ for some $\sigma \in [0,1]$, then $\|w\|_X \le C  M$. It is guaranteed by the assumption of H\"older estimate of $u$  that $\|u\|_{    C^{\alpha} (\bar{\om}\times (0,T) )   }  + \|u\|_{  C^{\alpha}(\pa {\om}\times (0,T)}\le M $.
\end{enumerate}
Now, the Leray-Schauder fixed point theorem tells that $\mathsf{T}(\cdot,1)$ must possess a fixed point $v$. i.e., there is some $v\in X$, such that $\mathsf{T}(v,1)=v$, i.e. $v \in C^{1+\alpha}(\Omega \times (0,T)) \cap  C^{1+\alpha}(\pa \Omega\times (0,T))$ is a solution to 
\begin{equation}\label{eq:nonlinear boundary diffusion non-homogeneous-linearized-sigma family-one}
 \left\{
\begin{aligned}
  \Delta v  &=   0  \quad  \mbox{in }     \Omega\times (0,\infty),     \\
 p(v+U_0)^{p-1}  \partial_{t}v &=   -  \partial_{\nu}(v+U_0) -  b (v+U_0)  +f  \quad  \mbox{on }     \partial \Omega\times (0,\infty),\\
   v  &=    0 \quad  \mbox{on }     \partial \Omega\times \{ t=0 \},
\end{aligned}
\right.
\end{equation}

Taking $u=v+U_0$, then  $u \in C^{1+\alpha}(\Omega \times (0,T)) \cap  C^{1+\alpha}(\pa \Omega\times (0,T))$ is a solution to \eqref{eq:nonlinear boundary diffusion non-homogeneous-u}. The uniqueness is proved by the Comparison principle as mentioned in Theorem \ref{thm: boundary diffusion non-homogeneous existence}.

\end{proof}
From Theorem \ref{thm:nonlinear boundary diffusion non-homogeneous short time existence }, one gets the following short time existence results.

\begin{cor}\label{cor:thm:nonlinear boundary diffusion  short time existence }

The non-linear boundary diffusion problem \eqref{eq:nonlinear boundary diffusion}-\eqref{eq: boundary diffusion initial condition u_0} has a unique CIE solution $u \in C^\infty(\om \times (0,T)) \bigcap C^\infty (\pa \om \times (0,T))$ on a small time interval $(0,T)$ if $\om$ is a smooth domain.
\end{cor}

As another consequence of estimate \eqref{eq:schauder estimate global} under condition \eqref{eq:bound of coefficients}, one can also obtain the existence result of a certain quasi-linear problem
\begin{equation}\label{eq:boundary diffusion nonlinear 1}
 \left\{
\begin{aligned}
  \mbox{div} (a(u)\nabla u)  &=   0  \quad  \mbox{in }     \Omega\times (0,T),     \\
   \partial_{t}u+a(u) \partial_{\nu}u + bu &=  f    \quad  \mbox{on }     \partial \Omega\times (0,T),\\
   u &= u_0 \in C^{1+\alpha}(\pa \om) \quad  \mbox{on } \partial \Omega\times \{ t=0 \},
\end{aligned}
\right.
\end{equation}
where $a(u)=(u^2+1)^{l/2}$, $l>0$, and $b,f \in C^{\alpha}(\partial \Omega\times (0,T) ) $. 
\begin{cor}\label{cor:schauder estiamte of certain nonlinear problem }

Suppose the solution of the problem \eqref{eq:boundary diffusion nonlinear 1} satisfies 
$$\|u\|_{    C^{\alpha} (\bar{\om}\times (0,T) )   } + \|u\|_{    C^{\alpha} (\pa {\om}\times (0,T) )   } \le M $$ 
for some $M>0$, then \eqref{eq:boundary diffusion nonlinear 1} admits a CIE solution  $u \in C^{1+\alpha}(\Omega \times (0,T)) \cap  C^{1+\alpha}(\pa \Omega\times (0,T))$.
\end{cor}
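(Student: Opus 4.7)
The plan is to mimic the Leray-Schauder fixed-point argument from the proof of Theorem \ref{thm:nonlinear boundary diffusion non-homogeneous short time existence }, linearizing \eqref{eq:boundary diffusion nonlinear 1} by freezing the diffusion coefficient. Extend the initial datum $u_0$ to a function $U_0\in C^{1+\alpha}(\bar\om)$ with $U_0=u_0$ on $\pa\om$ (harmonic, or weighted-harmonic as discussed below), and take the Banach spaces $X$ and $Y$ exactly as in \eqref{eq:X space}--\eqref{eq:Y space}. Given $w\in X$ and $\sigma\in[0,1]$, set $a_w(x,t):=((w(x,t)+U_0(x))^2+1)^{l/2}=a(w+U_0)$. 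The a priori Hölder bound $\|u\|_{C^\alpha(\bar\om\times(0,T))}+\|u\|_{C^\alpha(\pa\om\times(0,T))}\le M$, together with the smoothness of the map $s\mapsto(s^2+1)^{l/2}$, shows that $a_w$ is uniformly bounded above and below away from zero with $\|a_w\|_{C^\alpha}\le C(M)$; hence the structural conditions \eqref{eq:bound of coefficients} are satisfied uniformly in $(w,\sigma)$.

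Define the homotopy $\mathsf T:X\times[0,1]\to Y$, $(w,\sigma)\mapsto\psi_\sigma$, where $\psi_\sigma$ is the unique solution (supplied by Theorem \ref{thm: boundary diffusion non-homogeneous existence}, possibly after a mild extension to accommodate the interior source) to
$$
\left\{\begin{aligned}
\mbox{div}(a_w\nabla\psi_\sigma) &= -\sigma\,\mbox{div}(a_w\nabla U_0) \quad\mbox{in } \om\times(0,T),\\
\pa_t\psi_\sigma+a_w\pa_\nu\psi_\sigma+b\psi_\sigma &= \sigma\bigl[f-a_w\pa_\nu U_0-bU_0\bigr] \quad\mbox{on } \pa\om\times(0,T),\\
\psi_\sigma\big|_{t=0} &= 0.
\end{aligned}\right.
$$
The three Leray-Schauder hypotheses are then checked in the same manner as in the proof of Theorem \ref{thm:nonlinear boundary diffusion non-homogeneous short time existence }: compactness of $\mathsf T$ follows from the embedding $Y\subset\subset X$; $\mathsf T(\cdot,0)\equiv 0$ since the fully homogeneous linear problem admits only the trivial solution by the comparison principle; and the a priori bound $\|w\|_X\le C(M)$ on every fixed point $w=\mathsf T(w,\sigma)$ is exactly the hypothesis of the corollary, transported through the smooth change of variables $u=w+U_0$. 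The Leray-Schauder fixed-point theorem then supplies $w^*\in X$ with $\mathsf T(w^*,1)=w^*$, and $u:=w^*+U_0$ is the required solution; by Theorem \ref{thm:schauder estimate global} it lies in $C^{1+\alpha}(\Omega\times(0,T))\cap C^{1+\alpha}(\pa\Omega\times(0,T))$, and uniqueness follows from a comparison principle as in Theorem \ref{thm: boundary diffusion non-homogeneous existence}.

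The main obstacle is the appearance of the nontrivial divergence-form right-hand side $-\sigma\,\mbox{div}(a_w\nabla U_0)$ in the interior equation, which is absent from the purely harmonic setting of Theorem \ref{thm:nonlinear boundary diffusion non-homogeneous short time existence }. Two routes clear this hurdle. The first is to select $U_0=U_0^w$ as the $a_w$-weighted harmonic extension of $u_0$, i.e.\ the unique solution to $\mbox{div}(a_w(\cdot,0)\nabla U_0^w)=0$ in $\om$ with $U_0^w=u_0$ on $\pa\om$; standard linear elliptic theory gives $U_0^w\in C^{1+\alpha}(\bar\om)$ depending continuously on $w\in X$, and killing the interior source reduces the linearization to the exact form of Theorem \ref{thm: boundary diffusion non-homogeneous existence}. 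The second route is to extend Theorem \ref{thm: boundary diffusion non-homogeneous existence} itself to allow a divergence-form interior source $\mbox{div}(F)$ with $F\in C^\alpha(\bar\om\times(0,T))^n$; this is a mild generalization since Definition \ref{defn:veryweak solution of diffusion-local-constant coefficients-Non homogeous} and Lemma \ref{lem:non-homogeneous-iteration} already incorporate such terms, so the Schauder machinery of Section \ref{sec:Schauder} applies unchanged. Once either resolution is in place, the Leray-Schauder argument goes through verbatim.
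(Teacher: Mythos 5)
The overall strategy---freezing $a(u)$ to $a_w=a(w+U_0)$, noting that $a(s)=(s^2+1)^{l/2}\ge1$ is smooth so that the Hölder bound on $u$ gives coefficient bounds of the type \eqref{eq:bound of coefficients}, and then running the Leray--Schauder homotopy of Theorem~\ref{thm:nonlinear boundary diffusion non-homogeneous short time existence }---is the natural reading of this corollary and almost certainly what the paper intends. You have also correctly isolated where the argument stops being verbatim: because the interior operator is now $\mbox{div}(a(u)\nabla\cdot)$ rather than $\Delta$, subtracting an extension $U_0$ produces the nontrivial interior source $-\sigma\,\mbox{div}(a_w\nabla U_0)$.

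Neither of your two proposed repairs actually closes this gap as written. Route~1 fails because $a_w=a(w+U_0)$ depends on $t$: taking $U_0^w$ to be $a_w(\cdot,0)$-weighted harmonic kills $\mbox{div}(a_w\nabla U_0^w)$ only on the slice $t=0$; for $t>0$ the source returns. If instead you let $U_0^w(\cdot,t)$ solve $\mbox{div}(a_w(\cdot,t)\nabla U_0^w(\cdot,t))=0$ for each $t$, then $\pa_t U_0^w$ enters the boundary equation, and since $a_w$ is only $C^\alpha$ in $t$ this time derivative need not exist, let alone lie in $C^\alpha(\pa\om\times(0,T))$. Route~2 has the right instinct---Lemma~\ref{lem:non-homogeneous-iteration} does carry the $\pa_iF_i$ terms---but Theorems~\ref{thm: boundary diffusion non-homogeneous existence}, \ref{thm:linear boundary diffusion non-homogeneous existence-u} and \ref{thm:schauder estimate global} are stated only for the homogeneous interior equation, so allowing an interior $\mbox{div}(F)$ source is a genuine (if routine) extension of the paper's results, not something to invoke ``verbatim.'' A cleaner path that stays inside the paper's machinery is the Kirchhoff change of unknown $v=G(u)$ with $G(s)=\int_0^s a(\tau)\,d\tau$: since $G'=a\ge1$ is smooth and strictly increasing, $v$ inherits the $C^\alpha$ bound, $\mbox{div}(a(u)\nabla u)=\Delta v$, $\pa_\nu v=a(u)\pa_\nu u$, and the boundary equation becomes $a(G^{-1}(v))^{-1}\pa_t v+\pa_\nu v=f-bG^{-1}(v)$, which after freezing is exactly of the form \eqref{eq:linear boundary diffusion non-homogeneous-u}; your fixed-point scheme then runs with $U_0$ the ordinary harmonic extension of $G(u_0)$ and no interior source at all.
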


\begin{rem}\label{rem:assumption of Holder estimate}

To discuss the solution of problem \eqref{eq:nonlinear boundary diffusion non-homogeneous-u}, we need to use the assumption that $\|u\|_{    C^{\alpha} (\bar{\om}\times (0,T) )   }  + \|u\|_{    C^{\alpha} (\pa {\om}\times (0,T) )   }   \le M $. Such assumption can be obtained from the Harnack inequalities of corresponding weak solutions if the exponent $p$ is less than the Sobolev critical exponent. For $0<p<1, \ f \equiv 0$, Athanasopoulos-Caffarelli \cite{2010Continuity} proved continuity of the solution with the H\"older modulus in 2010. For $1<p<\frac{n}{n-2}, f \equiv 0$, Jin-Xiong-Yang obtained weak Harnack inequalities, see \cite{2024JXY} 2024. Similar Harnack inequalities have been proved in 2010 for the classical fast diffusion equation, see Bonforte- V\'azquez \cite{BV2010}.

Complete weak theory of boundary diffusion problems like \eqref{eq:nonlinear boundary diffusion non-homogeneous-u} is expected to be established in the future and it is very important in the research of boundary control problems in different initial-boundary settings. These are beyond the scope of the present paper.

\end{rem}

\bigskip

\noindent\textbf{Declarations of interest}: none. 

\bigskip

\noindent \textbf{Acknowledgement:} 
The author would like to thank Professor Tianling Jin of the Math Department, HKUST , who is my postdoc supervisor, for his valuable support and  advice. The author is currently visiting the math department of South China University of Technology while drafting this article. The author would like to thank Professor Zhengrong Liu for his kind help.

\small

\bigskip

\noindent X. Yang

\noindent Email: \textsf{2768620240@qq.com}

%Department of Mathematics, Yunnan University\\
%Kunming, China\\

\end{document}